\documentclass{amsart}
\usepackage{color}
\usepackage{amsmath,amsfonts,amsthm,amssymb,amscd}
\usepackage[all]{xy}
\newtheorem{thm}{Theorem}[section]
\newtheorem{crl}[thm]{Corollary}
\newtheorem{lmm}[thm]{Lemma}
\newtheorem{prp}[thm]{Proposition}
\theoremstyle{definition}
\newtheorem{dfn}[thm]{Definition}
\newtheorem{ex}[thm]{Example}
\theoremstyle{remark}
\newtheorem*{rmk}{Remark}

\numberwithin{equation}{section}
\allowdisplaybreaks
\DeclareMathOperator{\Diff}{\bf Diff}
\DeclareMathOperator{\Top}{\bf Top}

\DeclareMathOperator{\Ho}{\bf Ho}

\DeclareMathOperator{\Cinfty}{{\mathit C}^{\infty}}
\DeclareMathOperator{\ev}{ev}

\DeclareMathOperator{\Loops}{Loops}

\newcommand{\R}{\mathbf{R}}
\newcommand{\I}{\mathcal{I}}
\newcommand{\J}{\mathcal{J}}
\newcommand{\K}{\mathcal{K}}
\newcommand{\semicolon}{\,;}
\newcommand{\bI}{\partial I}

\newcommand{\abs}[1]{\lvert{#1}\rvert}
\newcommand{\rel}[1]{\ \mbox{\rm rel}\ #1}
\begin{document}
\title[The Quillen model structure on the category $\Diff$] 
      {The Quillen model structure on the category of diffeological spaces} 
\author[T.~Haraguchi] 
       {Tadayuki HARAGUCHI}
\address[Tadayuki Haraguchi]
        {Faculty of Education for Human Growth\\
         Nara Gakuen University\\
         Nara, 631-0003 Japan}
\email{t-haraguchi@naragakuen-u.jp}

\author[K.~Shimakawa] 
       {Kazuhisa SHIMAKAWA}
\address[Kazuhisa Shimakawa]
        {
         Okayama University\\
         Okayama, 700-8530 Japan}
\email{kazu@math.okayama-u.ac.jp}


\subjclass{Primary 18N40; Secondary 18F60, 55Q05}
\keywords{Diffeological space, Homotopy theory, Model category}
\thanks{This research was partially supported by JSPS KAKENHI Grant Number JP18K03279.}

\begin{abstract}
We construct on the category of diffeological spaces a Quillen model structure having smooth weak homotopy equivalences as the class of weak equivalences.
\end{abstract}
\maketitle
\section{Introduction}
The theory of model category was introduced by Quillen
(cf.~\cite{QuiH} and \cite{QuiR}), and is a crucial notion that
forms the framework of modern homotopy theory.  A model category
is just a category with three specified classes of morphism,
called fibrations, cofibrations and weak equivalences, which
satisfy several axioms that are deliberately reminiscent of
typical properties appearing in homotopy theory of topological
spaces.  It is shown in \cite[8.3]{Spa} that the category
$\bf Top$ of topological spaces has, so called, the Quillen model
structure, under which a map $f \colon X \to Y$ is defined to be

\begin{enumerate}
\item a weak equivalence if $f$ is a weak homotopy equivalence
  \cite[p.404]{Spanier},
\item a fibration if $f$ is a Serre fibration {\cite{Serre}}, and
\item a cofibration if $f$ has the left lifting property with
  respect to trivial fibrations.
\end{enumerate}

The objective of this paper is to introduce a model category
structure on the category $\Diff$ of diffeological
spaces 
which closely resembles the original Quillen model structure of
$\Top$ as in the following sense:
A smooth map between diffeological spaces is a weak equivalence
if it induces isomorphisms between smooth homotopy groups, and is
a fibration if it satisfies (a sort of) homotopy lifting
properties for pairs of cubical complexes $(I^n,J^{n-1})$, where
$I^n$ is the unit $n$-cube and
$J^{n-1} = \bI^{n-1} \times I \cup I^{n-1} \times \{1\}$.
Our construction is directly connected to the smooth homotopy
theory of diffeological spaces, and seems to be suited to combine
homotopy theoretical methods with methods of differential
topology and geometry.
In a subsequent part of this work \cite{HS1} (see also our
original preprint \cite{HS}), we show that our model structure is
Quillen equivalent to the Quillen model structure of $\Top$ under
the adjunction $\Top \rightleftarrows \Diff$.

The paper is organized as follows.  In Section 2 we study
homotopy sets of smooth maps between diffeological spaces.  In
particular, smooth homotopy groups $\pi_n(X,x_0)$ and relative
homotopy groups $\pi_n(X,A,x_0)$ are defined to be the homotopy
sets of smooth maps $(I^n,\bI^n) \to (X,x_0)$ and
$(I^n,\bI^n,J^{n-1}) \to (X,A,x_0)$, respectively.
Our definition of homotopy groups is different from, but turns
out to be equivalent to, the one given by \cite{Zem}.

Since $J^{n-1}$ is not a smooth retract of $I^n$, the treatment
of smooth homotopy groups is slightly harder compared with the
case of continuous homotopy groups.  Still, as we shall see in
Section~3, basic properties of continuous homotopy groups mostly
hold in the smooth case.  Especially, there exists a
\emph{homotopy long exact sequence} associated with a pair of
diffeological spaces (cf.\ Proposition~\ref{prp:homotopy exact
  sequence}).



In section 4 we introduce the notion of fibrations and
investigate its basic properties.
Briefly, fibration is a smooth version of Serre fibration and is
characterized by the right lifting property (in certain
restricted sense) with respect to the inclusions
$J^{n-1} \to I^n$, and its examples include trivial maps,
diffeological fiber bundles, and (tame version of) path-loop
fibrations (see Example~\ref{examples of fibrations}).  In
particular, every diffeological space $X$ is fibrant because the
constant map $X \to *$ is a fibration (cf.\
Corollary~\ref{crl:admissible maps are extendable}).

Based on the results obtained in preceding sections, we prove in
Section~5 that the category $\Diff$ has a model category
structure such that a smooth map is (i) a weak equivalence if it
is a weak homotopy equivalence, (ii) a fibration if it is a
fibration defined in the preceding section, and (iii) a
cofibration if it has the left lifting property with respect to
trivial fibrations.
The main theorem (Theorem~\ref{thm:model structure}) is proved by
straightforward verification of axioms given by Dwyer-Spalinski
\cite{Spa} except for \textbf{MC5}, that is, a factorization of a
smooth map $f$ in two ways: (i) $f = p \circ i$, where $i$ is a
cofibration and $p$ is a trivial fibration, and (ii)
$f = p \circ i$, where $i$ is a trivial cofibration and $p$ is a
fibration.
The lack of smooth retraction of $I^n$ onto $J^{n-1}$ makes,
again, the verification of \textbf{MC5} far more complicated than
the case of $\Top$.

In Section 6 we complete the proof of our main theorem
by giving a detailed verification of \textbf{MC5}.

Several authors have attempted the construction of a model
category related to diffeological spaces.  In \cite{DE},
{Dan} Christensen and {Enxin} Wu discuss smooth homotopy theory with
respect to behaviors of fibrations, cofibrations and weak
equivalences.  However, they have yet to introduce a model
structure.  In \cite{Wu}, Enxin Wu presents a cofibrantly
generated model structure on the category of diffeological chain
complexes.  In \cite{Kihara}, Hiroshi Kihara proves that there
exists a cofibrantly generated model structure (cf.~\cite[Theorem
1.3 and Lemma 9.6]{Kihara}) in which fibrations are smooth maps
enjoying the right lifting property with respect to inclusions
$\Lambda^{p}_{k} \to \Delta^{p}$, where $\Delta^{p}$ and
$\Lambda^{p}_{k}$ are the $p$-simplex and its horn, respectively,
equipped with certain (non-standard) diffeology.
{ Thus there are two model structures which
  give rise to distinct model categories $\Diff_{\text{HS}}$
  and $\Diff_{\text{Kihara}}$.
  We constructed in \cite{HS} a Quillen equivalence
  $T \!:\! \Diff_{\text{HS}} \rightleftarrows \Top \!:\!
  D$.
  %
  On the other hand, Kihara constructed in \cite[Theorem
  1.5]{Kihara4} a Quillen equivalence
  $\abs{\ }_D \!:\! \mathcal{S} \rightleftarrows
  \Diff_{\text{Kihara}} \!:\! S^D$, where $\mathcal{S}$ is
  the model category of simplicial sets.
  Thus, by passing to homotopy categories, there is a chain
  of categorical equivalences
  \[
    \Ho(\Diff_{\text{HS}}) \rightarrow \Ho(\Top) \leftarrow
    \Ho(\mathcal{S}) \rightarrow \Ho(\Diff_{\text{Kihara}})
  \]
  in which the middle arrow is induced by the Quillen
  equivalence
  $\abs{\ } \!:\! \mathcal{S} \rightleftarrows \Top \!:\!
  S$.  }
%

The authors wish to thank Dan Christensen and Hiroshi Kihara for
helpful discussions while preparing the article.  In particular,
Kihara pointed out in \cite{Kihara} that there exists a gap in
the proof of Lemma 5.6 of our original manuscript \cite{HS}.  We
correct this gap by suitably modifying the infinite gluing
construction (see Section 6).

\section{Diffeological spaces and Homotopy sets}
%
In this section we introduce homotopy sets of a diffeological
space in a slightly different manner than the one given in
\cite[Chapter 5]{Zem} and \cite[Section 3]{DE}.

{ Recall from \cite{Souriau} that a
  diffeological space consists of a set and its
  \emph{diffeology}, that is, the family of \emph{plots}
  (defined on Euclidean open sets) satisfying the conditions
  similar to, but much more relaxed than, the charts of a
  smooth manifold.  Any subset of a Euclidean space can be
  regarded as a diffeological space with respect to the
  \emph{standard diffeology} consisting of all smooth
  parametrizations.
  If $X$ and $Y$ are diffeological spaces, then a smooth map
  from $X$ to $Y$ is a set map $f \colon X \to Y$ such that
  for any plot $P \colon U \to X$ of $X$ the composition
  $f \circ P \colon U \to Y$ is a plot of $Y$.
  Denote by $\Cinfty(X,Y)$ the set of smooth maps from $X$
  to $Y$.  Then $\Cinfty(X,Y)$ can be regarded as a
  diffeological space with respect to the coarsest (i.e.\
  weakest) diffeology such that the evaluation map
  \[
    \ev \colon C^{\infty}(X,Y) \times X \to Y, \quad
    \ev(f,x)=f(x)
  \]
  is smooth.  
  Let $\Diff$ be the category with diffeological spaces as
  objects and smooth maps as morphisms.  The theorem below
  plays an essential role in our ongoing argument.  See
  \cite{Zem} for the proof.

  \begin{thm}\label{thm:properties_of_Diff}
    The following hold:
    \begin{enumerate}
    \item $\Diff$ is self-enriched in the sense that the
      inclusion $X \to \Cinfty(X,X)$ and the composition
      $\Cinfty(Y,Z) \times \Cinfty(X,Y) \to \Cinfty(X,Z)$
      are smooth for all $X,\, Y,\, Z \in \Diff$.
    \item $\Diff$ is closed under small limits and colimits.
    \item $\Diff$ is a cartesian closed category with
      $\Cinfty(X,Y)$ as exponential objects; in fact, there
      is a natural isomorphism
      \[
        \alpha \colon \Cinfty(X \times Y,Z) \to
        \Cinfty(X,\Cinfty(Y,Z))
      \]
      given by the formula: $\alpha(f)(x)(y) = f(x,y)$ for
      $x \in X$ and $y \in Y$.
    \end{enumerate}
  \end{thm}
}%
%

Now, let $\R$ be the real line equipped with the standard
diffeology, and let $I$ be the unit interval $[0,1] \subset \R$
equipped with the subspace diffeology.  Suppose
$f_0,\, f_1 \colon X \to Y$ are smooth maps between diffeological
spaces.  We say that $f_0$ and $f_1$ are homotopic, written
$f_0 \simeq f_1$, if there is a smooth map
$F \colon X \times I \to Y$ such that $F(x,0) = f_0(x)$ and
$F(x,1) = f_1(x)$ hold for every $x \in X$.  Such a smooth map
$F$ is called a homotopy between $f_0$ and $f_1$.
A map $f \colon X \to Y$ is called a homotopy equivalence if
there is a smooth map $g \colon Y \to X$ satisfying
\[
  g \circ f \simeq 1 \colon X \to X, \quad f \circ g \simeq 1
  \colon Y \to Y.
\]
We say that $X$ and $Y$ are homotopy equivalent, written
$X \simeq Y$, if there exists a homotopy equivalence
$f \colon X \to Y$.  Let $X$ be a diffeological space and $A$ a
subspace of $X$.  Then $A$ is called a \emph{retract} of $X$ if
there exists a smooth map $\gamma \colon X \to A$, called a
\emph{retraction}, which restricts to the identity on $A$.
%
If, moreover, there exists a homotopy $H \colon X \times I \to X$
such that
\[
  H(x,0)=x,\ H(x,1)=\gamma(x),\ H(a,t)=a \quad (x \in X,\ a \in
  A)
\]
then $A$ is called a \emph{deformation retract} of $X$ and
$\gamma$ a \emph{deformation retraction}.

We show that the notion of homotopy introduced above is
equivalent to the one given in \cite[Chapter 5]{Zem} and
\cite[Section 3]{DE}.
{%
  Recall from \cite[{$\S 4$}]{JM} that there is a
  non-decreasing smooth function $\lambda \colon \R \to I$
  which satisfies $\lambda(t) = 0$ for $t \leq 0$,
  $\lambda(t) = 1$ for $1\leq t$,
  $\lambda(1-t) = 1-\lambda(t)$ for every $t$, and is
  strongly increasing on $[0,1]$.  }%

\begin{prp}
  Let $f_0,\, f_1 \colon X \to Y$ be smooth maps.  Then $f_0$ and
  $f_1$ are homotopic if and only if there exists a smooth map
  $G \colon X \times \R \to Y$ such that $G(x,0) = f_0(x)$ and
  $G(x,1) = f_1(x)$ hold for every $x \in X$.
\end{prp}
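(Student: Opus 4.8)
The plan is to reparametrize in the $\R$-direction using the smooth function $\lambda \colon \R \to I$ recorded just above the statement, which is non-decreasing, satisfies $\lambda(t)=0$ for $t\le 0$ and $\lambda(t)=1$ for $t\ge 1$, and in particular $\lambda(0)=0$, $\lambda(1)=1$. The point is simply that $\lambda$ lets us pass from an $I$-parametrized homotopy to an $\R$-parametrized one, while in the other direction one restricts along the inclusion $I\hookrightarrow\R$.

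For the ``only if'' direction, suppose $F\colon X\times I\to Y$ is a homotopy with $F(x,0)=f_0(x)$ and $F(x,1)=f_1(x)$. I would set $G=F\circ(\id_X\times\lambda)\colon X\times\R\to Y$. Since $\lambda$ is a smooth function with values in $I$, it is smooth as a map $\R\to I$ (by definition of the subspace diffeology on $I$, the plots of $I$ are exactly the plots of $\R$ that factor through $I$), and hence $\id_X\times\lambda$ is smooth by the universal property of the product; therefore $G$ is smooth. The identities $\lambda(0)=0$ and $\lambda(1)=1$ then give $G(x,0)=F(x,0)=f_0(x)$ and $G(x,1)=F(x,1)=f_1(x)$.

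For the ``if'' direction, suppose $G\colon X\times\R\to Y$ is smooth with $G(x,0)=f_0(x)$ and $G(x,1)=f_1(x)$. Let $\iota\colon I\hookrightarrow\R$ be the inclusion, which is smooth since $I$ carries the subspace diffeology. Then $F=G\circ(\id_X\times\iota)\colon X\times I\to Y$ is smooth, and $F(x,0)=G(x,0)=f_0(x)$, $F(x,1)=G(x,1)=f_1(x)$, so $F$ is a homotopy from $f_0$ to $f_1$ in the sense of the paper.

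There is no real obstacle here: the only points requiring (routine) justification are the smoothness of $\id_X\times\lambda$ and of $\id_X\times\iota$, which follow from the explicit description of the product diffeology together with the description of the subspace diffeology on $I\subset\R$, and the genuine content --- the existence of a smooth non-decreasing $\lambda$ collapsing $(-\infty,0]$ to $0$ and $[1,\infty)$ to $1$ --- has already been supplied. As a remark, this proposition shows that the notion of smooth homotopy used in this paper coincides with the $\R$-parametrized notion of \cite{Zem} and \cite{DE}.
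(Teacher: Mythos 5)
Your proof is correct and follows essentially the same route as the paper: restricting $G$ along $X\times I\hookrightarrow X\times\R$ for one direction, and composing $F$ with $\id_X\times\lambda$ (using the smooth cut-off function $\lambda$) for the other. The extra remarks on smoothness via the subspace and product diffeologies are routine and consistent with the paper's argument.
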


\begin{proof}
  Suppose there is a smooth map $G \colon X \times \R \to Y$ such that
  $G(x,0) = f_0(x)$ and $G(x,1) = f_1(x)$ hold for every $x \in X$.
  Then the restriction of $G$ to $X \times I$ gives a homotopy
  $f_0 \simeq f_1$.
  On the other hand, if there is a homotopy
  $F \colon X \times I \to Y$ between $f_0$ and $f_1$ then the
  composition $G = F\circ(1 \times \lambda)$ is a smooth map
  $X \times \R \to Y$ satisfying $G(x,0) = f_0(x)$ and
  $G(x,1) = f_1(x)$.
\end{proof}

Suppose $F$ is a homotopy between $f_0,\, f_1 \colon X \to Y$ and $G$
a homotopy between $f_1,\, f_2 \colon X \to Y$.  Let us define
$F * G \colon X \times I \to Y$ by the formula
\[
  F * G(x,t) =
  \begin{cases}
    F(x,\lambda(3t)), & 0 \leq t \leq 1/2,
    \\
    G(x,\lambda(3t-2)), & 1/2 \leq t \leq 1.
\end{cases}
\]
Then $F * G$ is smooth all over $X \times I$, hence gives a homotopy
between $f_0$ and $f_2$.  It follows that the relation ``$\simeq$'' is
an equivalence relation.  The resulting equivalence classes are called
homotopy classes.

In particular, if $P$ consists of a single point then smooth maps from
$P$ to $X$ are just the points of $X$ and their homotopies are smooth
paths $I \to X$.

\begin{dfn}
  Given a diffeological space $X$, we denote by $\pi_0{X}$ the set of
  path components of $X$, that is, equivalence classes of points of
  $X$, where $x$ and $y$ are equivalent if there is a smooth path
  $\alpha \colon I \to X$ such that $\alpha(0) = x$ and
  $\alpha(1) = y$ hold.
\end{dfn}
For given pairs of diffeological spaces $(X,X_1)$ and
$(Y,Y_1)$, put
\[
  [X,X_1 \semicolon Y,Y_1] = \pi_0\Cinfty((X,X_1),(Y,Y_1)),
\]
where $\Cinfty((X,X_1),(Y,Y_1))$ is the subspace of $\Cinfty(X,Y)$
consisting of maps of pairs $(X,X_1) \to (Y,Y_1)$.
Similarly, we put
\[
  [X,X_1,X_2 \semicolon Y,Y_1,Y_2] =
  \pi_0\Cinfty((X,X_1,X_2),(Y,Y_1,Y_2)),
\]
where $\Cinfty((X,X_1,X_2),(Y,Y_1,Y_2))$ is the subspace
consisting of maps of triples.  Clearly, we have
$[X,X_1 \semicolon Y,Y_1] = [X,X_1,\emptyset \semicolon
Y,Y_1,\emptyset]$.
{ Then, by using the cartesian closedness of
  $\Diff$ 
  we can prove the following.  }

\begin{prp}
  The elements of $[X,X_1,X_2 \semicolon Y,Y_1,Y_2]$ are in one-to-one
  correspondence with the homotopy classes of maps
  $(X,X_1,X_2) \to (Y,Y_1,Y_2)$.
\end{prp}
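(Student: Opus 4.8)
The plan is to unwind both sides through the cartesian closed structure of $\Diff$ established in Theorem~\ref{thm:cartesian closed}. First I would recall the standard fact about subspace diffeologies: if $A \subseteq W$ carries the subspace diffeology, then for any diffeological space $Z$ a map $g \colon Z \to A$ is smooth if and only if the composite $Z \xrightarrow{g} A \hookrightarrow W$ is smooth; equivalently, smooth maps $Z \to A$ are exactly the smooth maps $Z \to W$ with image contained in $A$. Applying this with $Z = I$, $W = \Cinfty(X,Y)$ and $A = \Cinfty((X,X_1,X_2),(Y,Y_1,Y_2))$, a smooth path $\ell \colon I \to \Cinfty((X,X_1,X_2),(Y,Y_1,Y_2))$ is precisely a smooth path $\ell \colon I \to \Cinfty(X,Y)$ each of whose values $\ell(t)$ is a map of triples $(X,X_1,X_2) \to (Y,Y_1,Y_2)$.

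Next I would feed such an $\ell$ through the diffeomorphism $\alpha$ of Theorem~\ref{thm:cartesian closed} (together with the evident diffeomorphism $I \times X \cong X \times I$) to obtain a smooth map $F \colon X \times I \to Y$ with $F(x,t) = \ell(t)(x)$, and conversely; this is a bijection between $\Cinfty(I,\Cinfty(X,Y))$ and $\Cinfty(X \times I, Y)$. Under this correspondence, the requirement that every $\ell(t)$ send $X_1$ into $Y_1$ and $X_2$ into $Y_2$ translates exactly into $F(X_1 \times I) \subseteq Y_1$ and $F(X_2 \times I) \subseteq Y_2$; that is, $\ell$ is a smooth path in $\Cinfty((X,X_1,X_2),(Y,Y_1,Y_2))$ from $f_0$ to $f_1$ if and only if $F$ is a homotopy of maps of triples from $f_0$ to $f_1$. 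Hence the path-component relation on $\Cinfty((X,X_1,X_2),(Y,Y_1,Y_2))$ coincides with the relation ``homotopic as maps of triples''.

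Finally I would note that this latter relation is already an equivalence relation on maps of triples: reflexivity and symmetry are immediate, and transitivity follows from the concatenation $F * G$ constructed above, since every value $(F*G)(\cdot,t)$ is a value either of $F$ or of $G$ and therefore is again a map of triples (this is also what guarantees, via the case where $X$ is a point, that the path-component relation on the function space is transitive, so that $\pi_0$ of that space is genuinely the quotient by ``homotopic as triples'' rather than the equivalence relation it generates). Combining the two steps, $\pi_0\Cinfty((X,X_1,X_2),(Y,Y_1,Y_2))$ is exactly the set of homotopy classes of maps of triples $(X,X_1,X_2) \to (Y,Y_1,Y_2)$, which is the claim. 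The argument is essentially formal; the only points needing care are the compatibility of the subspace diffeology on $\Cinfty((X,X_1,X_2),(Y,Y_1,Y_2))$ with the exponential adjunction $\alpha$ — so that ``smooth path of maps of triples'' and ``homotopy through maps of triples'' match on the nose — and checking that the smoothing/concatenation $F*G$ never leaves the subspace of maps of triples.
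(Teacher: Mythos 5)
Your argument is correct and is exactly the argument the paper intends: the proposition is stated there as an immediate consequence of cartesian closedness (Theorem~\ref{thm:cartesian closed}), and your proposal simply fills in the same adjunction-plus-subspace-diffeology details, including the concatenation point ensuring the path relation is already an equivalence relation. No discrepancy with the paper's approach.
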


We also have the following proposition.

\begin{prp}
  \label{prp:mapping space is homotopy invariant}
  Suppose $f \colon (X,X_1,X_2) \to (Y,Y_1,Y_2)$ is a homotopy
  equivalence.  Then for every $(Z,Z_1,Z_2)$ the precomposition
  and postcomposition by $f$ induce homotopy equivalences
  \[
  \begin{split}
    f^* &\colon \Cinfty((Y,Y_1,Y_2),(Z,Z_1,Z_2)) \to
    \Cinfty((X,X_1,X_2),(Z,Z_1,Z_2))
    \\
    f_* &\colon \Cinfty((Z,Z_1,Z_2),(X,X_1,X_2)) \to
    \Cinfty((Z,Z_1,Z_2),(Y,Y_1,Y_2))
  \end{split}
  \]
\end{prp}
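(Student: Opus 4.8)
The plan is to build homotopy inverses for $f^*$ and $f_*$ directly out of a homotopy inverse of $f$, using the cartesian closedness of $\Diff$ (Theorem~\ref{thm:cartesian closed}) to promote homotopies on $X$ and $Y$ to homotopies on mapping spaces. By hypothesis there are a smooth map of triples $g \colon (Y,Y_1,Y_2) \to (X,X_1,X_2)$ and homotopies of triples $H \colon X \times I \to X$, $K \colon Y \times I \to Y$ with $H(\,\cdot\,,0)=\id_X$, $H(\,\cdot\,,1)=g\circ f$, $K(\,\cdot\,,0)=\id_Y$, $K(\,\cdot\,,1)=f\circ g$ (here ``of triples'' means $H(X_j\times I)\subseteq X_j$ and $K(Y_j\times I)\subseteq Y_j$ for $j=1,2$; we may reverse a homotopy if needed, since $\simeq$ is an equivalence relation). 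I claim $g^*$ is a homotopy inverse of $f^*$ and $g_*$ a homotopy inverse of $f_*$. Since precomposition is contravariantly and postcomposition covariantly functorial, $f^*\circ g^* = (g\circ f)^*$, $g^*\circ f^* = (f\circ g)^*$, $g_*\circ f_* = (g\circ f)_*$ and $f_*\circ g_* = (f\circ g)_*$ as self-maps of the relevant triple mapping spaces, so it is enough to homotope each of $(g\circ f)^*$, $(f\circ g)^*$, $(g\circ f)_*$, $(f\circ g)_*$ to the identity.

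For $f_*$ I would produce the homotopy $\id\simeq(g\circ f)_*$ from $H$: the assignment $(\varphi,t,z)\mapsto H(\varphi(z),t)$ is a smooth map $\Cinfty((Z,Z_1,Z_2),(X,X_1,X_2))\times I\times Z\to X$, being a composite of the evaluation map with $H$, so by Theorem~\ref{thm:cartesian closed} its adjoint is a smooth map $\Cinfty((Z,Z_1,Z_2),(X,X_1,X_2))\times I\to\Cinfty(Z,X)$; because $H$ preserves $X_1$ and $X_2$, its image lies in the subspace $\Cinfty((Z,Z_1,Z_2),(X,X_1,X_2))$, so it is a smooth map into that subspace, and it equals $\id$ at $t=0$ and $(g\circ f)_*$ at $t=1$. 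Running the same construction with $K$ (and $Y$) in place of $H$ (and $X$) gives $\id\simeq(f\circ g)_*$, completing $f_*$. For $f^*$ the mirror construction works: $(\psi,t,x)\mapsto\psi(H(x,t))$ is a smooth map $\Cinfty((X,X_1,X_2),(Z,Z_1,Z_2))\times I\times X\to Z$ whose adjoint — still valued in the triple mapping space, since $H$ respects the filtration of $X$ and $\psi$ is a map of triples — is a homotopy from $\id$ to $(g\circ f)^*$; using $K$ in place of $H$ gives $\id\simeq(f\circ g)^*$.

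The one step I expect to carry real content is checking that these set-theoretically obvious homotopies are smooth \emph{as maps into the mapping spaces of triples}, which carry the functional diffeology together with the subspace diffeology inherited from $\Cinfty(X,Z)$ or $\Cinfty(Z,X)$. This is where Theorem~\ref{thm:cartesian closed} is indispensable for the ``functional'' part; the ``subspace'' part then comes for free from the general fact that a smooth map whose image lies in a sub-diffeological space is automatically smooth for the subspace diffeology; and it is exactly the hypothesis that $H$ and $K$ are homotopies \emph{of triples} that keeps the images inside the triple mapping spaces. Beyond that the argument is the formal bookkeeping with $(g\circ f)_* = g_*\circ f_*$ and $(g\circ f)^* = f^*\circ g^*$ recorded above, together with the trivial endpoint evaluations $H(\,\cdot\,,0)=\id_X$, $H(\,\cdot\,,1)=g\circ f$, and their analogues for $K$.
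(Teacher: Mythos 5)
Your proposal is correct, and it rests on the same core mechanism as the paper's proof: cartesian closedness (Theorem~\ref{thm:cartesian closed}) is used to convert the homotopies $\id_X \simeq g\circ f$ and $\id_Y \simeq f\circ g$ into smooth homotopies on the mapping spaces of triples, with the subspace diffeology handled by noting the image stays in the triple mapping space. The paper packages this slightly more abstractly — it shows the functors $\Cinfty(-,\boldsymbol{Z})$ and $\Cinfty(\boldsymbol{Z},-)$ are enriched and hence preserve homotopies, then concludes formally — whereas you apply the adjunction directly to the specific homotopies $H$ and $K$; this is an inessential repackaging of the same argument.
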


\begin{proof}
  For any $\boldsymbol{X} = (X,X_1,X_2)$ and fixed
  $\boldsymbol{Z} = (Z,Z_1,Z_2)$, put
  \[
    F\boldsymbol{X} = \Cinfty(\boldsymbol{X},\boldsymbol{Z}) =
    \Cinfty((X,X_1,X_2),(Z,Z_1,Z_2)).
  \]
  We shall show that the contravariant functor $F$ from the category
  of triples of diffeological spaces to $\Diff$ preserves homotopies.
  This of course implies that
  $f^* \colon F\boldsymbol{Y} \to F\boldsymbol{X}$ is a homotopy
  equivalence if so is $f \colon \boldsymbol{X} \to \boldsymbol{Y}$.

  The contravariant functor $F$ is enriched in the sense that the map
  \[
    \Cinfty(\boldsymbol{X},\boldsymbol{Y}) \to
    \Cinfty(F\boldsymbol{Y},F\boldsymbol{X}),
  \]
  which takes $f \colon \boldsymbol{X} \to \boldsymbol{Y}$ to the
  induced map $f^* \colon F\boldsymbol{Y} \to F\boldsymbol{X}$, is
  smooth.  This follows from Theorem~\ref{thm:properties_of_Diff} because the map above is adjoint to the composition
  $\Cinfty(\boldsymbol{Y},\boldsymbol{Z}) \times
  \Cinfty(\boldsymbol{X},\boldsymbol{Y}) \to
  \Cinfty(\boldsymbol{X},\boldsymbol{Z})$.

  Suppose $h \colon \boldsymbol{X} \times I \to \boldsymbol{Y}$ is a
  homotopy between $f$ and $g$.  Then by
  Theorem~\ref{thm:properties_of_Diff} together with the enrichedness
  of $F$ the composite map
  \[
    I \to \Cinfty(\boldsymbol{X},\boldsymbol{Y}) \to
    \Cinfty(F\boldsymbol{Y},F\boldsymbol{X}),
  \]
  which takes $t \in I$ to
  $h_t^* \colon F\boldsymbol{Y} \to F\boldsymbol{X}$, is smooth.
  Thus, by passing to the adjoint again, we get a smooth map
  $F\boldsymbol{Y} \times I \to F\boldsymbol{X}$ giving a homotopy
  between $f^*$ and $g^*$.

  Quite similarly, we can prove that the covariant functor
  $\boldsymbol{X} \to \Cinfty(\boldsymbol{Z},\boldsymbol{X})$
  preserves homotopies.
\end{proof}

\begin{crl}
  \label{crl:homotopy set is homotopy invariant}
  The homotopy set $[X,X_1,X_2 \semicolon Y,Y_1,Y_2]$ is homotopy
  invariant with respect to both $(X,X_1,X_2)$ and $(Y,Y_1,Y_2)$.
\end{crl}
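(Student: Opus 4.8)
The plan is to deduce the statement directly from Proposition~\ref{prp:mapping space is homotopy invariant} by applying the path-component functor $\pi_0$. The one preliminary fact I would record is that $\pi_0 \colon \Diff \to \mathbf{Set}$ is a homotopy functor: if $g_0, g_1 \colon Z \to Z'$ are smooth maps and $H \colon Z \times I \to Z'$ is a homotopy between them, then for each $z \in Z$ the smooth path $t \mapsto H(z,t)$ joins $g_0(z)$ to $g_1(z)$ in $Z'$, so $\pi_0(g_0) = \pi_0(g_1)$. Hence $\pi_0$ descends to a functor on the homotopy category and, in particular, carries every homotopy equivalence of diffeological spaces to a bijection.

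With this in hand, suppose $f \colon (X,X_1,X_2) \to (X',X_1',X_2')$ is a homotopy equivalence of triples. By Proposition~\ref{prp:mapping space is homotopy invariant} the precomposition map
\[
  f^* \colon \Cinfty((X',X_1',X_2'),(Y,Y_1,Y_2)) \to \Cinfty((X,X_1,X_2),(Y,Y_1,Y_2))
\]
is a homotopy equivalence, so applying $\pi_0$ gives a bijection $[X',X_1',X_2' \semicolon Y,Y_1,Y_2] \cong [X,X_1,X_2 \semicolon Y,Y_1,Y_2]$. This establishes invariance in the source variable. Dually, if $g \colon (Y,Y_1,Y_2) \to (Y',Y_1',Y_2')$ is a homotopy equivalence of triples, the same proposition provides a homotopy equivalence $g_* \colon \Cinfty((X,X_1,X_2),(Y,Y_1,Y_2)) \to \Cinfty((X,X_1,X_2),(Y',Y_1',Y_2'))$, and $\pi_0$ again turns it into a bijection of homotopy sets, proving invariance in the target variable.

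I do not expect any real obstacle here: Proposition~\ref{prp:mapping space is homotopy invariant} does all the work, and the only point to check separately is that a homotopy equivalence between mapping spaces induces a bijection after passing to $\pi_0$, which is exactly the homotopy-invariance of $\pi_0$ noted at the outset.
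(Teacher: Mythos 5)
Your proposal is correct and is exactly the argument the paper intends: the corollary is stated as an immediate consequence of Proposition~\ref{prp:mapping space is homotopy invariant}, obtained by applying $\pi_0$ (which sends homotopy equivalences to bijections) to the mapping spaces $\Cinfty((X,X_1,X_2),(Y,Y_1,Y_2))$. Your explicit check that $\pi_0$ is a homotopy functor is the only step the paper leaves tacit.
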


We are now ready to define the $n$-th homotopy set of a
diffeological space.
%
%
Let $\bI^n$ be the boundary of the unit $n$-cube
$I^n \subset \R^n$, and let
\[
  J^{n-1} = \bI^{n-1} \times I \cup I^{n-1} \times \{1\} \quad (n
  \geq 1).
\]

\begin{dfn}
  Given a pointed diffeological space $(X,x_0)$, we put
  \[
    \pi_n(X,x_0) = [I^n,\bI^n \semicolon X,x_0],\quad n \geq 0.
  \]
  Similarly, given a pointed pair of diffeological spaces $(X,A,x_0)$,
  we put
  \[
    \pi_n(X,A,x_0) = [I^n,\bI^n,J^{n-1} \semicolon X,A,x_0],\quad n
    \geq 1.
  \]
\end{dfn}

For $n \geq 1$, $\pi_n(X,x_0)$ is isomorphic to $\pi_n(X,x_0,x_0)$,
and $\pi_0(X,x_0)$ is isomorphic to the set of path components
$\pi_0{X}$, regardless of the choice of basepoint $x_0$.  Note,
however, that we consider $\pi_0(X,x_0)$ as a pointed set with
basepoint $[x_0] \in \pi_0{X}$.
We now introduce a group structure on $\pi_n(X,A,x_0)$.  Suppose
$\phi$ and $\psi$ are smooth maps from $(I^n,\bI^n,J^{n-1})$ to
$(X,A,x_0)$.  If $n \geq 2$, or if $n \geq 1$ and $A = x_0$, then
there is a smooth map $\phi * \psi \colon I^n \to X$ which takes
$(t_1,t_2,\dots,t_n) \in I^n$ to
\[
  \begin{cases}
    \phi(\lambda(3t_1),t_2,\dots,t_n), & 0 \leq t_1 \leq 1/2
    \\
    \psi(\lambda(3t_1-2),t_2,\dots,t_n), & 1/2 \leq t_1 \leq 1.
  \end{cases}
\]
It is clear that $\phi * \psi$ defines a map of triples
$(I^n,\bI^n,J^{n-1}) \to (X,A,x_0)$, and there is a multiplication on
$\pi_n(X,A,x_0)$ given by the formula
\[
  [\phi] \cdot [\psi] = [\phi * \psi] \in \pi_n(X,A,x_0).
\]

\begin{prp}
  \label{prp:group structure}
  With respect to the multiplication
  $([\phi],[\psi]) \mapsto [\phi]\cdot[\psi]$, the homotopy set
  $\pi_n(X,A,x_0)$ is a group if $n \geq 2$ or if $n \geq 1$ and
  $A = x_0$, and is an abelian group if $n \geq 3$ or if $n \geq 2$
  and $A = x_0$.
  Moreover, for every
  smooth map $f \colon (X,A,x_0) \to (Y,B,y_0)$ the induced map
  \[
    f_* \colon \pi_n(X,A,x_0) \to \pi_n(Y,B,y_0)
  \]
  is a group homomorphism whenever its source and target are groups.
\end{prp}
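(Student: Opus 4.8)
The plan is to transcribe the classical argument that the $n$-cube representatives form a group, the one subtlety being that every reparametrization and every homotopy that occurs must be \emph{smooth}; throughout we replace piecewise-linear reparametrizations of $I$ by smooth non-decreasing ones assembled from the function $\lambda$ fixed above, and we use the elementary observation that two smooth maps $r_0, r_1 \colon I \to I$ agreeing at $0$ and at $1$ are joined by the smooth, $I$-valued affine homotopy $r_s = (1-s) r_0 + s r_1$. The product is well defined on homotopy classes because a pair of homotopies of triples $F \colon \phi \simeq \phi'$, $G \colon \psi \simeq \psi'$ produces, by the very formula defining $*$, a homotopy of triples from $\phi * \psi$ to $\phi' * \psi'$, smooth for the same reason $\phi * \psi$ is. For the unit, note that since $\lambda(3t_1) = 1$ for $t_1 \ge 1/3$ the map $\phi * e$ is precisely $\phi$ precomposed with $(t_1, \dots, t_n) \mapsto (\lambda(3t_1), t_2, \dots, t_n)$; the affine homotopy in the first coordinate from $t_1 \mapsto \lambda(3 t_1)$ to $t_1 \mapsto t_1$ is then a smooth homotopy of triples $\phi * e \simeq \phi$, since each of its slices fixes $0$ and $1$ in the first coordinate and hence preserves $\bI^n$ and $J^{n-1}$. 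The identity $[e] \cdot [\phi] = [\phi]$ is symmetric.

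For inverses put $[\phi]^{-1} = [\bar\phi]$, where $\bar\phi(t_1, \dots, t_n) = \phi(1 - t_1, t_2, \dots, t_n)$; reflection in the first coordinate preserves $\bI^n$ and $J^{n-1}$, so $\bar\phi$ is again a map of triples. Let $\tau \colon I \to I$ be the smooth ``tent'' $\tau(t_1) = \lambda(3t_1)$ for $t_1 \le 1/2$ and $\tau(t_1) = \lambda(3 - 3t_1)$ for $t_1 \ge 1/2$ (smooth at $t_1 = 1/2$ because $\lambda$ is constant near $3/2$); the relation $\lambda(1-t) = 1 - \lambda(t)$ gives $\phi * \bar\phi (t) = \phi(\tau(t_1), t_2, \dots, t_n)$. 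Then $H(t,s) = \phi((1-s)\tau(t_1), t_2, \dots, t_n)$ is smooth, equals $\phi * \bar\phi$ at $s = 0$, and at $s = 1$ equals the map $(t_1, \dots, t_n) \mapsto \phi(0, t_2, \dots, t_n)$, which is $e$ because $\phi$ carries the face $t_1 = 0$ to $x_0$ in each case of the hypothesis; since $(1-s)\tau(t_1) = 0$ whenever $t_1 \in \{0,1\}$, the map $H$ is a homotopy of triples, so $[\phi] \cdot [\bar\phi] = [e]$, and $[\bar\phi] \cdot [\phi] = [e]$ follows from $\bar{\bar\phi} = \phi$. Associativity is the same kind of statement: $(\phi * \psi) * \chi$ and $\phi * (\psi * \chi)$ both concatenate $\phi$, $\psi$, $\chi$ in the first coordinate with the junctions smoothed by $\lambda$, so they differ only by a smooth, endpoint-fixing reparametrization of the first coordinate, and the affine homotopy between such a reparametrization and the identity supplies the homotopy of triples.

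Commutativity follows from the Eckmann--Hilton argument. When $n \ge 3$ for arbitrary $A$, or $n \ge 2$ and $A = x_0$, the same formula applied in the \emph{second} coordinate defines a multiplication $*_2$ on $\pi_n(X,A,x_0)$; well-definedness of $\phi *_2 \psi$ as a map of triples uses exactly that in this range the faces $t_2 \in \{0,1\}$ lie in $\bI^{n-1} \times I \subseteq J^{n-1}$. Both $*_1 = *$ and $*_2$ have $[e]$ as two-sided unit, and a smooth, boundary-fixing reparametrization of the $(t_1, t_2)$-square (again built from $\lambda$, with constant-$x_0$ padding) exhibits the interchange law up to homotopy of triples; Eckmann--Hilton then forces $*_1 = *_2$ and both products to be commutative, which is precisely the asserted abelian range. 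Finally, given a smooth $f \colon (X,A,x_0) \to (Y,B,y_0)$, the composite $f \circ \phi$ is a map of triples $(I^n, \bI^n, J^{n-1}) \to (Y,B,y_0)$, the equality $f \circ (\phi * \psi) = (f \circ \phi) * (f \circ \psi)$ holds on the nose from the defining formula, and $f$ sends homotopies of triples to homotopies of triples; hence $f_*$ is well defined and multiplicative, so a group homomorphism whenever source and target are groups.

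The only real difficulty lies in checking smoothness of the homotopies at the seams of the piecewise formulas --- along $t_1 = 1/2$, along $t_1 \in \{1/3, 2/3\}$ where consecutive $\lambda$-pieces meet, and along the edges of the $(t_1,t_2)$-square used for the interchange law. This is handled uniformly by the fact that $\lambda$ is \emph{locally constant} near $0$ and near $1$: near each such seam the relevant piecewise formula is independent of the variable being reparametrized, so all partial derivatives agree and the glued map is $C^{\infty}$; the affine-homotopy device keeps the intermediate reparametrizations smooth, $I$-valued, and fixed on the boundary, so that each homotopy remains a map of triples throughout.
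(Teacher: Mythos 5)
The well-definedness, unit, and inverse steps are fine, and your observation about the interchange law is actually stronger than you claim: since $*_1$ and $*_2$ act on disjoint coordinates with the same $\lambda$-smoothing, $(\phi *_1 \psi) *_2 (\phi' *_1 \psi') = (\phi *_2 \phi') *_1 (\psi *_2 \psi')$ holds on the nose, so no reparametrization is needed there. (For reference, the paper states this proposition without proof, so you are not being compared against a written argument.)

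The genuine gap is the associativity step. You assert that $(\phi * \psi) * \chi$ and $\phi * (\psi * \chi)$ ``differ only by a smooth, endpoint-fixing reparametrization of the first coordinate.'' That is false for the $\lambda$-smoothed product. In $(\phi*\psi)*\chi$ the third factor is traversed by the profile $C(t_1)=\lambda(3t_1-2)$, while in $\phi*(\psi*\chi)$ it is traversed by $C'(t_1)=\lambda(3\lambda(3t_1-2)-2)$. An exact identity $C = C'\circ\rho$ on the active band forces, by injectivity of $\lambda$ on $(0,1)$, $\rho(t_1)=\bigl(2+\lambda^{-1}(t_1)\bigr)/3$ near $t_1=1$; since $\lambda$ is infinitely flat at $1$, $(\lambda^{-1})'$ blows up there and $\rho$ cannot be smooth at $t_1=1$. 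Equivalently: $1-C(t_1)$ vanishes to exponential order at $t_1=1$ while $1-C'(t_1)$ vanishes to doubly exponential order, and no smooth change of variable converts one order of flatness into the other; the same mismatch occurs at the interior seams and, in the mirrored direction, at $t_1=0$. So the affine-homotopy device, which only applies when the two maps are obtained from a \emph{common} map by two endpoint-fixing reparametrizations, does not apply as you have set it up, and associativity is exactly the case where you cannot fall back on Eckmann--Hilton (for $n=1$, $A=x_0$ and $n=2$, general $A$ there is no second operation).

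The statement is still true, and your general strategy can be repaired, but it needs an extra idea. One fix: factor \emph{both} triple products through a common ``spread-out'' concatenation $M$ that runs $\phi,\psi,\chi$ on the bands $[0,\tfrac15]$, $[\tfrac25,\tfrac35]$, $[\tfrac45,1]$ (with value $x_0$ on the gaps, which is legitimate because the faces $t_1\in\{0,1\}$ of each factor land in $x_0$ in every case covered by the hypothesis). One then checks that $(\phi*\psi)*\chi = M\circ(g_0\times\mathrm{id})$ and $\phi*(\psi*\chi)=M\circ(g_1\times\mathrm{id})$ for \emph{smooth} non-decreasing $g_0,g_1$ fixing $0$ and $1$ (for instance $g_0(t)=\tfrac35\lambda(3t)$ on $[0,\tfrac13]$ and $g_0(t)=\tfrac{3t+2}{5}$ on $[\tfrac23,1]$, smoothly interpolated through the $x_0$-gap), after which your affine homotopy does give a homotopy of triples. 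Alternatively, use the paper's tameness machinery (Lemma \ref{lmm:taming of maps}): every class has a tame representative, constant near the relevant faces, and for such representatives the classical reparametrization argument can be carried out smoothly. As written, though, the associativity paragraph rests on an incorrect claim.
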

{
\begin{rmk}
  Our definition of $\pi_n(X,x_0)$ is equivalent to the one
  given in \cite{Zem}, in which it is defined as the set of
  path components of 
  $\Loops^n(X,x_0)$, where
  \[
    \Loops(Y,y) = \Cinfty((\R,0,1),(Y,y,y))
  \]
  for any pointed diffeological space $(Y,y)$.
  On the other hand, our $\pi_n(X,x_0)$ is isomorphic to the
  set of path components
  of 
  $\Omega^n(X,x_0)$, where
  \[
    \Omega(Y,y) = \Cinfty((I,0,1),(Y,y,y)).
  \]
  But the inclusion $(I,0,1) \to (\R,0,1)$ is a homotopy
  equivalence because it has a homotopy inverse
  $\lambda \colon (\R,0,1) \to (I,0,1)$.
  Thus, by Proposition~\ref{prp:mapping space is homotopy
    invariant} we have $\Loops(Y,y) \simeq \Omega(Y,y)$,
  hence $\Loops^n(X,x_0) \simeq \Omega^n(X,x_0)$ for all
  $n \geq 0$.
  The situation is similar for the homotopy sets of pairs
  $\pi_n(X,A,x_0)$.
\end{rmk}
}
%
\section{Tame maps and approximate retractions}
In the case of topological spaces, the fact that $J^{n-1}$ is a
retract of $I^n$ is crucial for developing homotopy theory (e.g.\
homotopy exact sequence and homotopy extension property).
Unfortunately, this scenario does not work in the smooth case
because there is no smooth retraction $I^n \to J^{n-1}$.
Still, we can retrieve most of the ingredients of homotopy theory
by replacing retractions with a more relaxed notion of
\emph{approximate retractions}.



For every $1 \leq j \leq n$ and $\alpha \in \{0,1\}$, let
$\pi_j^{\alpha}$ denote the orthogonal projection of $I^n$ onto
its $(n-1)$-dimensional face
$\{(t_1,\cdots,t_n) \in I^n \mid t_j = \alpha\}$.

\begin{dfn}
  \label{dfn:definition of tame maps}
  (1) Let $K$ be a subset of $I^n$ and $0< \epsilon \leq 1/2$.  A
  smooth map $f \colon K \to X$ is said to be
  \emph{$\epsilon$-tame} if for every $P = (t_1,\dots,t_n) \in K$
  we have $f(P) = f(\pi_j^{\alpha}(P))$ whenever
  $\abs{t_j-\alpha} \leq \epsilon$ and $\pi_j^{\alpha}(P) \in K$
  hold.

  (2) A smooth homotopy $H \colon X \times I \to Y$ is said to be
  \emph{$\epsilon$-tame} if so is its adjoint
  $I \to \Cinfty(X,Y)$, that is, there hold $H(x,t) = H(x,0)$ for
  $0 \leq t \leq \epsilon$ and $H(x,t) = H(x,1)$ for
  $1-\epsilon \leq t \leq 1$.
\end{dfn}

%

Note that $\epsilon$-tameness implies $\sigma$-tameness for all
$\sigma < \epsilon$.
We use the abbreviation ``tame'' to mean $\epsilon$-tame for some
$\epsilon > 0$.

For $0 < \epsilon \leq 1/2$, let
$I^n(\epsilon) = [\epsilon,1-\epsilon]^n$ and call it the
\emph{$\epsilon$-chamber} of $I^n$.
More generally, if $K$ is a cubical subcomplex (i.e.\ a union of
faces) of $I^n$ then its $\epsilon$-chamber $K(\epsilon)$ is
defined to be the union of $\epsilon$-chambers of its maximal
faces.
Thus we have ${(\bI^n)(\epsilon)} = \bigcup F(\epsilon)$, where $F$
runs through the $(n-1)$-dimensional faces of $I^n$, and
$J^{n-1}(\epsilon) = {(\bI^n)(\epsilon)} \cap J^{n-1}$.

It is evident that the following holds.

\begin{lmm}
  \label{lmm:uniqueness criterion for tame maps}
  Let $f$ and $g$ be smooth maps from a cubical subcomplex $K$ of
  $I^n$ to a diffeological space $X$.  Suppose both $f$ and $g$
  are $\epsilon$-tame.  Then $f$ and $g$ coincide on $K$ if and
  only if they coincide on $K(\epsilon)$.
\end{lmm}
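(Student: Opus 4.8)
The plan is to prove both implications directly; the ``only if'' direction is immediate, and in the ``if'' direction the $\epsilon$-tameness hypothesis does all the work. Since $K(\epsilon)$ is by definition the union of the sets $F(\epsilon)$ as $F$ ranges over the maximal faces of $K$, and each $F(\epsilon) \subseteq F \subseteq K$, we have $K(\epsilon) \subseteq K$; hence a coincidence on $K$ restricts to one on $K(\epsilon)$. For the converse, assume $f$ and $g$ agree on $K(\epsilon)$ and fix an arbitrary $P = (t_1,\dots,t_n) \in K$. Choose a maximal face $F$ of $K$ containing $P$, say the one cut out by the equations $t_j = c_j$ for $j$ in an index set $S$, with $c_j \in \{0,1\}$, the coordinates $t_j$ with $j \notin S$ being free. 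Define $Q \in I^n$ by keeping the coordinates in $S$ and replacing every free coordinate $t_j$ by $\epsilon$ if $t_j < \epsilon$, by $1-\epsilon$ if $t_j > 1-\epsilon$, and leaving it unchanged otherwise. Then all free coordinates of $Q$ lie in $[\epsilon,1-\epsilon]$, so $Q \in F(\epsilon) \subseteq K(\epsilon)$ and thus $f(Q) = g(Q)$; it therefore remains only to check that $f(P) = f(Q)$ and, symmetrically, $g(P) = g(Q)$.

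To obtain $f(P) = f(Q)$ I would interpolate, adjusting the free coordinates one at a time: write $P = P^{(0)}, P^{(1)}, \dots, P^{(m)} = Q$, where $P^{(\ell)}$ is obtained from $P^{(\ell-1)}$ by changing a single free coordinate $t_j$ from its value in $P^{(\ell-1)}$ — which is either $< \epsilon$ or $> 1-\epsilon$ — to $\epsilon$ or $1-\epsilon$ respectively. Each $P^{(\ell)}$ still lies in $F$, since its $S$-coordinates are the $c_j$ and its free coordinates lie in $[0,1]$. Letting $\alpha \in \{0,1\}$ be the endpoint nearest the coordinate being moved, we have $\pi_j^{\alpha}(P^{(\ell-1)}) = \pi_j^{\alpha}(P^{(\ell)})$, because $P^{(\ell-1)}$ and $P^{(\ell)}$ agree off the $j$-th coordinate; this common point lies in the subface $F \cap \{t_j = \alpha\} \subseteq F \subseteq K$. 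Since the $j$-th coordinates of both $P^{(\ell-1)}$ and $P^{(\ell)}$ are within $\epsilon$ of $\alpha$, $\epsilon$-tameness of $f$ yields $f(P^{(\ell-1)}) = f(\pi_j^{\alpha}(P^{(\ell-1)})) = f(\pi_j^{\alpha}(P^{(\ell)})) = f(P^{(\ell)})$. Composing these equalities over $\ell$ gives $f(P) = f(Q)$, and the identical argument applied to $g$ gives $g(P) = g(Q)$. Hence $f(P) = f(Q) = g(Q) = g(P)$, and since $P \in K$ was arbitrary, $f = g$ on $K$.

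The one point requiring care — and the reason the statement is not entirely trivial — is that $Q$ need not be reachable from $P$ by the projections $\pi_j^{\alpha}$ alone: a vertex of $K$, for instance, is fixed by every $\pi_j^{\alpha}$ yet typically lies outside $K(\epsilon)$, so no amount of projecting will land one inside $K(\epsilon)$. The remedy is exactly the use of $\epsilon$-tameness ``in both directions'' exhibited above: the value of $f$ at a point with $j$-th coordinate equal to the threshold $\epsilon$ (or $1-\epsilon$) is identified, via the same projection, with its value at the face $t_j = \alpha$, which in turn equals the value at the original point $P$. The only bookkeeping is to verify, as done above, that all the interpolating points and the relevant projected points remain inside $F$, hence inside $K$, so that the tameness hypothesis is applicable at every step.
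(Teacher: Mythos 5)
Your proof is correct. The paper offers no argument at all for this lemma (it is stated as ``evident''), and your clamping-and-interpolation argument --- moving each free coordinate of a point of a maximal face into $[\epsilon,1-\epsilon]$ one at a time and using $\epsilon$-tameness of both maps at each step, in both directions through the common projection $\pi_j^{\alpha}$ --- is exactly the routine verification the authors leave implicit.
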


We show that any tame map defined on $J^{n-1}$ is extendable
over $I^n$.  
{ For this purpose, and to proceed further, we
  need a refinement of $\lambda$.  }

\begin{lmm}
  \label{lmm:modified smash function}
  Suppose $0 \leq \sigma < \tau \leq 1/2$.  Then there exists a
  non-decreasing smooth function $T_{\sigma,\tau} \colon \R \to I$
  satisfying the following conditions:
  \begin{enumerate}
  \item $T_{\sigma,\tau}(t) = 0$ for $t \leq \sigma$,
  \item $T_{\sigma,\tau}(t) = t$ for $\tau \leq t \leq 1 - \tau$,
  \item $T_{\sigma,\tau}(t) = 1$ for $1 - \sigma \leq t$, and
  \item $T_{\sigma,\tau}(1 - t) = 1 - T_{\sigma,\tau}(t)$
    for all $t$.
  \end{enumerate}
\end{lmm}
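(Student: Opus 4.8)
The plan is to construct $T_{\sigma,\tau}$ explicitly as a smooth interpolation built from the already-available bump functions $\gamma$ and $\lambda$. First I would focus on the left half $t \le 1/2$ and arrange the symmetry condition (4) at the end by reflection. On the left, the function must be identically $0$ for $t \le \sigma$, must equal the identity $t$ on $[\tau, 1/2]$, and must be non-decreasing and smooth across the transition region $[\sigma,\tau]$. The natural device is a smooth ``switch'' $s \colon \R \to [0,1]$ that vanishes near $\sigma$ and equals $1$ near $\tau$; using $\lambda$ one sets $s(t) = \lambda\bigl((t-\sigma)/(\tau-\sigma)\bigr)$, so $s(t)=0$ for $t\le\sigma$ and $s(t)=1$ for $t\ge\tau$.

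Next I would define, for $t \le 1/2$,
\[
  T_{\sigma,\tau}(t) = \int_{\sigma}^{t} s(u)\, du \Big/ \Bigl(\text{suitable normalization}\Bigr),
\]
or, more directly, observe that it suffices to take $T_{\sigma,\tau}(t) = \tau\, \lambda\bigl((t-\sigma)/(\tau - \sigma)\bigr)$ on $[\sigma,\tau]$ glued to $T_{\sigma,\tau}(t) = t$ on $[\tau, 1/2]$ — but one must check this gluing is $C^\infty$. Since $\lambda$ is flat (all derivatives vanish) at both endpoints $0$ and $1$, the piece $\tau\lambda((t-\sigma)/(\tau-\sigma))$ has value $0$ and all derivatives $0$ at $t=\sigma$, matching the constant $0$; at $t=\tau$ it has value $\tau$ but its first derivative is $0$, which does \emph{not} match the slope $1$ of the identity. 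So a naive product/composition does not quite work, and one has to be a little more careful: the cleanest fix is the integral formulation, $T_{\sigma,\tau}(t) = \int_0^t s(u)\,du$ for $t \le 1/2$ where $s$ is chosen so that $\int_0^\sigma s = 0$, $s \equiv 1$ on $[\tau, 1/2]$, and $\int_0^{1/2} s = 1/2$; the last is a single scalar constraint on the ``area'' of the bump $s$ on $[\sigma,\tau]$, which can always be met by interpolating with an appropriately scaled $\lambda$-derivative since as the bump shape varies the area ranges over an interval containing $\int_\sigma^{1/2} 0\,du = 1/2 - $ (something) up to $1/2-\sigma$, and $1/2$ lies strictly inside because $\sigma < \tau \le 1/2$. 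Conditions (1) and (2) for $t \le 1/2$ then follow from $s\equiv 0$ near $\sigma$ and $s \equiv 1$ on $[\tau,1/2]$ together with $\int_0^\tau s = \tau$, and non-decreasingness follows from $s \ge 0$.

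Finally I would extend to $t > 1/2$ by declaring $T_{\sigma,\tau}(t) = 1 - T_{\sigma,\tau}(1-t)$; this forces condition (4) by construction, is smooth at $t = 1/2$ because $T_{\sigma,\tau}(t) = t$ near $1/2$ on both sides (so the two formulas agree to infinite order there), and yields (3) from (1) by reflection. Monotonicity on the right half follows from monotonicity on the left. The main obstacle is the smoothness of the gluing at $t = \tau$ (and $t = 1-\tau$): one needs the transition function to reach slope exactly $1$ smoothly, which is precisely why the integral construction with a flat-to-$1$ bump $s$ is the right tool rather than a direct formula in $\lambda$; everything else is routine verification using the flatness properties of $\gamma$ and $\lambda$ already recorded in the excerpt.
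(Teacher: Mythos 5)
Your construction is essentially the paper's: the paper also builds the left half as the integral of a rescaled $\lambda$-switch plus a correction term (it sets $F(t)=\int_0^t\lambda\bigl(\frac{\tau x-\sigma}{\tau-\sigma}\bigr)dx+\frac{\tau+\sigma}{2\tau}\lambda\bigl(\frac{\tau t-\sigma}{\tau-\sigma}\bigr)$ and then defines $T_{\sigma,\tau}$ on $t\le 1/2$ from $F$ and on $t\ge 1/2$ by the reflection $1-T_{\sigma,\tau}(1-t)$, smoothness at $1/2$ coming from $T_{\sigma,\tau}(t)=t$ on $[\tau,1-\tau]$). Your diagnosis of the slope-matching obstruction at $t=\tau$, the integral remedy, and the reflection argument for conditions (3) and (4) all match the paper and are correct.

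There is, however, a concrete error at the one quantitative step, the feasibility of the normalization. If $s$ is required to take values in $[0,1]$, to vanish on $[0,\sigma]$ and to equal $1$ on $[\tau,1/2]$, then $\int_0^{1/2}s\le(\tau-\sigma)+(1/2-\tau)=1/2-\sigma$, which is strictly less than $1/2$ whenever $\sigma>0$; equivalently, the constraint $\int_\sigma^\tau s=\tau$ asks for more area than the length $\tau-\sigma$ of the transition interval can carry at height $\le 1$. So the achievable areas range only up to $1/2-\sigma$, and $1/2$ does \emph{not} lie inside that range, contrary to what you assert. The repair is simply to drop the bound $s\le 1$: monotonicity needs only $s\ge 0$, and $T_{\sigma,\tau}$ still lands in $I$ because $\int_0^{1/2}s=1/2$. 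Concretely, take $s(u)=\lambda\bigl(\frac{u-\sigma}{\tau-\sigma}\bigr)+c\,\mu(u)$ with $\mu\ge 0$ a bump supported in $(\sigma,\tau)$ and $c$ chosen so that the bump has total mass $\frac{\tau+\sigma}{2}$ (a suitably scaled $\lambda'$ works, as you hint); this overshooting bump is exactly the role played, after differentiation, by the paper's added term $\frac{\tau+\sigma}{2\tau}\lambda\bigl(\frac{\tau t-\sigma}{\tau-\sigma}\bigr)$. One further small point: when $\tau=1/2$ the interval on which $T_{\sigma,\tau}(t)=t$ degenerates to the single point $t=1/2$, so your smoothness-at-$1/2$ argument should instead invoke that $s$ is flat at $1/2$ (value $1$, all derivatives zero), which makes the two reflected formulas agree to infinite order there.
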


\begin{proof}
  For every $t \in \R$, put
  \[
    F(t) = \int_0^t \lambda\left(\frac{\tau x - \sigma}{\tau -
        \sigma}\right)dx +
    \frac{\tau+\sigma}{2\tau}\lambda\left(\frac{\tau t -
        \sigma}{\tau - \sigma}\right).
  \]
  Then $F \colon \R \to \R$ is a non-decreasing smooth function
  such that $F(t)$ has value $0$ for $t \leq \sigma/\tau$ and has
  value $t$ for $t \geq 1$.  Now, let us define a function
  $T_{\sigma,\tau} \colon \R \to I$ by putting
  $T_{\sigma,\tau}(t) = F(t/\tau)$ for $t \leq 1/2$, and
  $T_{\sigma,\tau}(t) = 1 - F((1-t)/\tau)$ for $1/2 \leq t$.
  As we have $T_{\sigma,\tau}(t) = t$ for $\tau \leq t \leq 1-\tau$,
  the function $T_{\sigma,\tau}$ is smooth all over $\R$ and satisfies
  the desired conditions.
\end{proof}
{ Note that $T_{0,1/2}$ differs from but can be
  used as an replacement of $\lambda$ thanks to its
  properties.  The graph of $T_{\sigma,\tau}$ looks as
  follows: }
{
\begin{center}
\begin{picture}(250,150)
\put(0,30){\vector(1,0){250}}
\put(60,0){\vector(0,1){140}}
\put(60,30){\dashbox{2}(100,100)}
\put(60,30){\dashbox{2}(30,30)}
\put(60,30){\dashbox{2}(70,70)}
\put(60,30){\dashbox{2}(90,100)}
\put(90,60){\line(1,1){40}}
\put(150,130){\line(1,0){70}}
\qbezier(130,100)(139,110)(143,120)
\qbezier(143,120)(145,129)(148,130)
\qbezier(70,30)(75,31)(77,40)
\qbezier(77,40)(85,58)(90,60)
\put(52,20){\footnotesize$0$}
\put(52,58){\footnotesize$\tau$}
\put(44,98){\footnotesize$1$-$\tau$}
\put(52,127){\footnotesize$1$}
\put(68,20){\footnotesize$\sigma$}
\put(87,20){\footnotesize$\tau$}
\put(123,20){\footnotesize$1$-$\tau$}
\put(142,20){\footnotesize$1$-$\sigma$}
\put(158,20){\footnotesize$1$}
\put(98,80){\footnotesize $id$}
\end{picture}
\end{center}
}
\begin{lmm}
 \label{lmm:taming of maps}
  Let $K$ be a cubical subcomplex of $I^n$.  Then for any smooth map
  $f \colon K \to X$ and $0 < \sigma < \epsilon \leq 1/2$, there
  exists a homotopy $f \simeq g$ relative to $K(\epsilon)$ such that
  $g$ is $\sigma$-tame.
  If $f$ is $\epsilon$-tame on a subcomplex $L$ of $K$ then the
  homotopy can be taken to be relative to $L \cup K(\epsilon)$.
\end{lmm}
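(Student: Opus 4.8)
The plan is to construct $g$ by successively "taming" $f$ one coordinate direction at a time, using the functions $T_{\sigma,\tau}$ from Lemma~\ref{lmm:modified smash function} to absorb a collar neighborhood of each $(n-1)$-face into that face. Fix a decreasing sequence $\epsilon = \tau_0 > \tau_1 > \cdots > \tau_n = \sigma$, and for $1 \leq j \leq n$ write $T_j = T_{\tau_j,\tau_{j-1}} \colon \R \to I$; the key properties are that $T_j$ is the identity on $[\tau_{j-1},1-\tau_{j-1}]$, pushes $[0,\tau_j]$ to $0$ and $[1-\tau_j,1]$ to $1$, and is symmetric about $1/2$. Define smooth self-maps of $I^n$ by $\rho_j(t_1,\dots,t_n) = (t_1,\dots,t_{j-1},T_j(t_j),t_{j+1},\dots,t_n)$ and set $\Phi = \rho_n \circ \cdots \circ \rho_1 \colon I^n \to I^n$. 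The candidate map is $g = f \circ (\Phi|_K)$, provided $\Phi(K) \subseteq K$; since $T_j$ maps $[0,1]$ to $[0,1]$ and maps the slabs $t_j \leq \tau_j$, $t_j \geq 1-\tau_j$ onto the faces $t_j = 0$, $t_j = 1$ respectively, each $\rho_j$ carries every face of $I^n$ into itself or into a lower face, so $\Phi$ preserves every cubical subcomplex $K$. Thus $g \colon K \to X$ is smooth and well-defined.

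Next I would verify $\sigma$-tameness of $g$. Suppose $P = (t_1,\dots,t_n) \in K$ with $|t_j - \alpha| \leq \sigma = \tau_n$ and $\pi_j^\alpha(P) \in K$. Because $\tau_n \leq \tau_j$, the $j$-th coordinate of $P$ lies in the slab that $T_j$ collapses onto $\alpha$; the only subtlety is that the coordinates applied before stage $j$ (namely $\rho_1,\dots,\rho_{j-1}$) do not touch the $j$-th coordinate, so when $\rho_j$ acts the $j$-th coordinate is still $t_j$, hence $\rho_j$ sends both $P$ and $\pi_j^\alpha(P)$ to points agreeing in coordinate $j$ (value $\alpha$); and the later maps $\rho_{j+1},\dots,\rho_n$ act identically on the two points. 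Therefore $\Phi(P)$ and $\Phi(\pi_j^\alpha(P))$ have the same image, whence $g(P) = g(\pi_j^\alpha(P))$.

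For the homotopy $f \simeq g$ rel $K(\epsilon)$, note $\Phi$ restricted to the $\epsilon$-chamber $K(\epsilon) = K(\tau_0)$ is the identity: on each maximal face, every coordinate either is constant at $0$ or $1$ (untouched by the relevant $T_j$ up to symmetry) or lies in $[\tau_0,1-\tau_0]$ where every $T_j$ is the identity. A straight-line homotopy in $I^n$ between $\id$ and $\Phi$ won't stay inside $K$, so instead I would interpolate the collapsing functions: let $T_j^s$ for $s \in I$ be a smooth family of non-decreasing maps $\R \to I$ with $T_j^0 = \id|_{[0,1]}$, $T_j^1 = T_j$, each $T_j^s$ the identity on $[\tau_0,1-\tau_0]$ and fixing $0,1$ — obtained e.g.\ by the analogous integral formula with the threshold parameter scaled by $s$, or simply by $T_j^s(t) = (1-s)t + sT_j(t)$, which is non-decreasing, fixes $0$ and $1$, and equals $t$ on $[\tau_0,1-\tau_0]$. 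Assembling $\rho_j^s$ and $\Phi^s = \rho_n^s \circ \cdots \circ \rho_1^s$, each $\Phi^s$ preserves $K$ and fixes $K(\epsilon)$, so $H(P,s) = f(\Phi^s(P))$ after reparametrizing $s$ by $\lambda$ to make it smooth at the endpoints gives the required homotopy $f \simeq g$ rel $K(\epsilon)$. Finally, for the relative refinement: if $f$ is already $\epsilon$-tame on a subcomplex $L$, then on $L$ the collapsing performed by each $\rho_j^s$ only identifies points that $f$ already sends to the same value (by $\epsilon$-tameness and $\tau_j \leq \epsilon$, together with Lemma~\ref{lmm:uniqueness criterion for tame maps} to promote agreement on chambers to agreement on all of $L$), so $H(\cdot,s)|_L = f|_L$ for all $s$, and the homotopy is rel $L \cup K(\epsilon)$.

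The main obstacle I anticipate is the bookkeeping in the relative case: showing that the composite $\Phi^s$ — not just a single $\rho_j^s$ — respects the $\epsilon$-tameness of $f$ on $L$. One must track, face by face of $L$, that after the coordinate moves $\rho_1^s,\dots,\rho_{j-1}^s$ the point still lies in the slab on which $\rho_j^s$ does its collapsing in a way compatible with $f$'s tameness, and this requires that the faces of $L$ be permuted among themselves by each $\rho_j^s$ (true, since $\rho_j^s$ preserves the cubical structure). Carefully invoking Lemma~\ref{lmm:uniqueness criterion for tame maps} at each stage, rather than arguing pointwise, is what keeps this manageable.
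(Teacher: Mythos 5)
Your construction is, modulo bookkeeping, the paper's own: since each $\rho_j$ touches only its own coordinate, $\Phi$ is simply the product map $T_1\times\cdots\times T_n$ (the paper takes all factors equal to $T_{\sigma,\epsilon}$ and sets $g=f\circ T_{\sigma,\epsilon}^n|K$), and your family $\Phi^s$ with $T_j^s(t)=(1-s)t+sT_j(t)$ is exactly the straight-line homotopy $(v,s)\mapsto f((1-s)v+s\Phi(v))$ that you said would leave $K$ but in fact stays in $K$ (as your own face-preservation check shows), which is precisely the paper's homotopy; the tameness and rel $L\cup K(\epsilon)$ arguments likewise match. So the proposal is correct and essentially the same as the paper's proof, with the varying thresholds $\tau_j$ and the $\lambda$-reparametrization being unnecessary but harmless extras.
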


\begin{proof}
  Let $g = f \circ T_{\sigma,\epsilon}^n|K \colon K \to X$.
  Then $g$ is $\sigma$-tame and there is a homotopy $f \simeq g$
  relative to $K(\epsilon)$ given by the map $K \times I \to X$
  which takes $(v,t) \in K \times I$ to
  $f((1-t)v + t T_{\sigma,\epsilon}^n(v))$.
  If $f$ is $\epsilon$-tame on $L$ then the homotopy $f \simeq g$
  is relative to $L \cup K(\epsilon)$ because $g$ coincides with
  $f$ on $L$.
\end{proof}

It follows, in particular, that any element of $\pi_n(X,A,x_0)$
is represented by a tame map $(I^n,\bI^n,J^{n-1}) \to (X,A,x_0)$.

A map $I^n \to J^{n-1}$ is called an \emph{$\epsilon$-approximate
  retraction} if it restricts to the identity on the
$\epsilon$-chamber $J^{n-1}(\epsilon)$.



{%
\begin{lmm}
  \label{lmm:approximate retraction exists}
  For any real numbers $\epsilon,\, \sigma$ such that
  $0 < \sigma < \epsilon < 1/2$, there exists an
  $\epsilon$-approximate retraction
  $R_{\sigma,\epsilon} \colon I^n \to J^{n-1}$ which is
  $\sigma$-tame.
\end{lmm}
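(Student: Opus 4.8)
The plan is to produce $R_{\epsilon}$ by an explicit formula built from the one‑variable smooth functions $\lambda$ and $T_{\sigma,\tau}$, rather than by smoothing the usual (non‑smooth) radial retraction $I^{n}\to J^{n-1}$. As a first step I would record $J^{n-1}(\epsilon)$ concretely: writing $F_{j,\alpha}=\{(t_{1},\dots,t_{n})\in I^{n}\mid t_{j}=\alpha\}$ for the $(n-1)$-faces of $I^{n}$, and noting that the faces contained in $J^{n-1}$ are exactly those with $(j,\alpha)\neq(n,0)$, one reads off from the definitions that $J^{n-1}(\epsilon)=\bigcup_{(j,\alpha)\neq(n,0)}F_{j,\alpha}(\epsilon)$, where $F_{j,\alpha}(\epsilon)=\{t_{j}=\alpha,\ t_{i}\in[\epsilon,1-\epsilon]\ \text{for all}\ i\neq j\}$.

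Fix $0<\sigma<\tau<\epsilon$ and take $T_{\sigma,\tau}\colon\R\to I$ from Lemma~\ref{lmm:modified smash function}. Let $p\colon I\to I$ be the smooth function $p(s)=\lambda\big((2s-\sigma)/\sigma\big)\,\lambda\big((2(1-s)-\sigma)/\sigma\big)$, so that $p\equiv1$ on $[\sigma,1-\sigma]$ and $p$ vanishes on $[0,\sigma/2]\cup[1-\sigma/2,1]$; in particular $p(0)=p(1)=0$. For $v=(t_{1},\dots,t_{n})\in I^{n}$ set $\psi(v)=\prod_{j=1}^{n-1}p(t_{j})$ and define
\[
  R_{\epsilon}(v)=\Big(T_{\sigma,\tau}(t_{1}),\ \dots,\ T_{\sigma,\tau}(t_{n-1}),\ 1-(1-t_{n})\big(1-\psi(v)\big)\Big).
\]
Each coordinate of $R_{\epsilon}$ is a composite of smooth maps with values in $I$, so $R_{\epsilon}\colon I^{n}\to I^{n}$ is smooth.

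Next I would verify the two required properties. For the image: if $t_{j}\le\sigma$ or $t_{j}\ge1-\sigma$ for some $j\le n-1$, then $T_{\sigma,\tau}(t_{j})\in\{0,1\}$, so $R_{\epsilon}(v)$ lies in a face $F_{j,\beta}\subset J^{n-1}$; otherwise every $t_{j}$ with $j\le n-1$ lies in $(\sigma,1-\sigma)$, hence $\psi(v)=1$ and the last coordinate of $R_{\epsilon}(v)$ equals $1$, so $R_{\epsilon}(v)\in I^{n-1}\times\{1\}\subset J^{n-1}$. Thus $R_{\epsilon}$ maps $I^{n}$ into $J^{n-1}$, and since $J^{n-1}$ carries the subspace diffeology this yields a smooth map $R_{\epsilon}\colon I^{n}\to J^{n-1}$. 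For the retraction property: on $F_{j,\alpha}(\epsilon)$ with $j\le n-1$ one has $T_{\sigma,\tau}(t_{j})=\alpha=t_{j}$, while $T_{\sigma,\tau}(t_{i})=t_{i}$ for the other $i\le n-1$ because $[\epsilon,1-\epsilon]\subset[\tau,1-\tau]$, and $\psi(v)=0$ since $p(t_{j})=p(\alpha)=0$; hence the last coordinate is $1-(1-t_{n})=t_{n}$ and $R_{\epsilon}(v)=v$. On $F_{n,1}(\epsilon)$ the first $n-1$ coordinates are unchanged for the same reason and the last is $1=t_{n}$. So $R_{\epsilon}$ restricts to the identity on $J^{n-1}(\epsilon)$.

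The conceptual obstacle that this formula is meant to circumvent is that the naive candidate---radial projection of $I^{n}$ onto $J^{n-1}$ from a point below the bottom face---is smooth near $J^{n-1}(\epsilon)$ but develops creases along the preimages of the edges of $J^{n-1}$, and these cannot be smoothed while keeping the image inside $J^{n-1}$, since along a ray a point lands in $J^{n-1}$ only for a discrete set of scaling parameters. The construction above instead pushes the first $n-1$ coordinates toward $\{0,1\}$ \emph{independently}, each by the single smooth function $T_{\sigma,\tau}$, and uses $\psi$ only to force $t_{n}\mapsto1$ on the locus where none of those coordinates has been pinned. Arranging the overlap correctly---so that $\psi=1$ whenever all $t_{j}\in(\sigma,1-\sigma)$ while $\psi=0$ on every $F_{j,\alpha}(\epsilon)$ with $j\le n-1$---is the one point requiring care, and it is exactly what the product $\prod p(t_{j})$ achieves.
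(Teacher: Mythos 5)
Your construction is correct: pinning the first $n-1$ coordinates with $T_{\sigma,\tau}$ and using the bump-product $\psi$ to force the last coordinate to $1$ exactly on the locus where no lateral coordinate has been pushed to $\{0,1\}$ does give a smooth map $I^n \to J^{n-1}$ restricting to the identity on $J^{n-1}(\epsilon)$. This is essentially the paper's own approach---its $R_{\epsilon}$ likewise smashes $t_1,\dots,t_{n-1}$ with a $T$-type function and uses a product of $\lambda$-bumps in those coordinates to push the last coordinate to the top face---with your version differing only in cosmetic details (a fixed smashing parameter and an affine interpolation in $t_n$ in place of the paper's $t_n$-dependent threshold $m(u)$ and the term $T_{\sigma,\epsilon}(u)+T_{\sigma,\epsilon}(1-u)\prod_k\lambda(t_k/m(u))\lambda((1-t_k)/m(u))$).
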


\begin{proof} %
  For any $t = (t_1,\cdots,t_{n-1}) \in I^{n-1}$ and
  $u \in I$, put
  \begin{align*}
    &L(t)
      = {\prod}_{1 \leq k \leq n-1}
      T_{0,1/2}\left(\frac{t_k}{\sigma}\right)
      T_{0,1/2}\left(\frac{1-t_k}{\sigma}\right)
    \\
    &v(t,u)
      = L(t) + (1 - L(t))T_{\sigma,\epsilon}(u)
  \end{align*}
  Then $L(t)$ has value $1$ for $t \in I^{n-1}(\sigma)$ and
  $0$ for $t \in \bI^{n-1}$, and consequently, $v(t,u)$ has
  value $1$ for $(t,u) \in I^{n-1}(\sigma) \times I$, and
  $T_{\sigma,\epsilon}(u)$ for
  $(t,u) \in \bI^{n-1} \times I$.
  Clearly, the smooth map
  $R_{\sigma,\epsilon} \colon I^n \to J^{n-1}$ given by the
  formula
  \[
    R_{\sigma,\epsilon}(t,u) =
    (T_{\sigma,\epsilon}(t_1),\cdots,T_{\sigma,\epsilon}(t_{n-1}),
    v(t,u)).
  \]
  is $\sigma$-tame and restricts to the identity on
  $J^{n-1}(\epsilon)$.
\end{proof}

By combining this with Lemma~\ref{lmm:uniqueness criterion
  for tame maps}, we obtain the following.

\begin{prp}
  \label{prp:tame maps are extendable}
  Any $\epsilon$-tame map $f \colon J^{n-1} \to X$ can be
  extended to a $\sigma$-tame map $g \colon I^n \to X$ for
  any $\sigma < \epsilon$.
  %
\end{prp}

\begin{proof}
  Define $g \colon I^n \to X$ as the composition
  $f \circ R_{\sigma,\epsilon}$.  Then $g$ is $\sigma$-tame
  and coincides with $f$ on $J^{n-1}$ by
  Lemma~\ref{lmm:uniqueness criterion for tame maps}.
\end{proof}

The following is an immediate consequence of the proposition
above.  

\begin{thm}
  \label{thm:WHEP}
  Let $(X,A)$ be a pair of cubical subcomplexes of $I^m$,
  and $f \colon X \to Y$ be a tame map.
  Suppose there is a tame homotopy $h \colon A \times I \to Y$
  satisfying $h_0 = f|A$.  Then there exists a tame homotopy
  $H \colon X \times I \to Y$ satisfying $H_0 = f$ and
  $H|A \times I = h$.
\end{thm}

\begin{proof}
  $X \times I$ is obtained from
  $A \times I \cup X \times \{0\}$ by successively attaching
  cubes of the form $I^n$ along its subcomplex
  $L^{n-1} = \bI^{n-1} \times I \cup I^{n-1} \times \{0\}$.
  Thus, Proposition~\ref{prp:tame maps are extendable} (but
  with $J^{n-1}$ replaced by its copy $L^{n-1}$) enables us
  to extend
  $h \cup f \colon A \times I \cup X \times \{0\} \to Y$ to
  a tame homotopy $H \colon X \times I \to Y$.
\end{proof}
}%

For any pointed pair of diffeological spaces $(X,A,x_0)$, let
\[
  i_* \colon \pi_n(A,x_0) \to \pi_n(X,x_0), \quad %
  j_* \colon \pi_n(X,x_0) \to \pi_n(X,A,x_0)
\]
be the maps induced, respectively, by the inclusions
$i \colon (A,x_0) \to (X,x_0)$ and
$j \colon (X,x_0,x_0) \to (X,A,x_0)$, and let
\[
  \varDelta \colon \pi_n(X,A,x_0) \to \pi_{n-1}(A,x_0) %
  \quad (n \geq 1)
\]
be the map which takes the class of
$\phi \colon (I^n,\bI^n,J^{n-1}) \to (X,A,x_0)$ to the class of
its restriction
$\phi|I^{n-1} \colon (I^{n-1},\bI^{n-1}) \to (A,x_0)$.  Here, we
identify $I^{n-1}$ with $I^{n-1} \times \{0\} \subset I^n$.
Clearly, $\varDelta$ is a group homomorphism for $n \geq 2$.

Since any element of the homotopy group has a tame representative, we
can obtain the homotopy exact sequence by arguing as in the case of
topological spaces.  (Compare \cite[5.19]{Zem}.)

\begin{prp}
  \label{prp:homotopy exact sequence}
  Given a pointed pair of diffeological spaces $(X,A,x_0)$, there is
  an exact sequence of pointed sets
  \begin{multline*}
    \cdots \xrightarrow{} \pi_{n+1}(X,A,x_0) \xrightarrow{\varDelta}
    \pi_n(A,x_0) \xrightarrow{i_*} \pi_n(X,x_0) \xrightarrow{j_*}
    \pi_n(X,A,x_0) \xrightarrow{} \cdots
    \\
    \cdots \xrightarrow{} \pi_1(X,A,x_0) \xrightarrow{\varDelta}
    \pi_0(A,x_0) \xrightarrow{i_*} \pi_0(X,x_0).
  \end{multline*}
\end{prp}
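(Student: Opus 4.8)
The plan is to mimic the classical proof of the homotopy long exact sequence for a pair, but using the tame representatives and approximate retractions developed in this section in place of the genuine retraction $I^n \to J^{n-1}$. Exactness is to be checked at each of the three types of spot: at $\pi_n(A,x_0)$, at $\pi_n(X,x_0)$, and at $\pi_n(X,A,x_0)$. The inclusions of composites being zero is in each case immediate: $j_* \circ i_* = 0$ because a map into $A$ becomes, after including into $X$, a map of triples $(I^n,\bI^n,I^{n-1}) \to (X,A,x_0)$ whose restriction to $I^{n-1}$ is still a map into $A$, hence it extends over $I^n$ using an approximate retraction $R_\epsilon \colon I^n \to J^{n-1}$ (Lemma~\ref{lmm:approximate retraction exists}) composed with the given map; $\varDelta \circ j_* = 0$ because the boundary restriction of a representative coming from $(X,x_0,x_0)$ is constant; and $i_* \circ \varDelta = 0$ because the restriction $\phi|I^{n-1}$ of a representative $\phi\colon(I^n,\bI^n,J^{n-1})\to(X,A,x_0)$ is nullhomotopic in $X$ via $\phi$ itself (reparametrised off $I^{n-1}\times\{0\}$).

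For the reverse inclusions one argues exactly as in the topological case, with one systematic modification: whenever the classical argument extends a map off $J^{n-1}$ or uses that $J^{n-1}$ is a deformation retract of $I^n$, we instead first tame the map (Lemma~\ref{lmm:taming of maps}, so that it is constant near the relevant faces), then extend it using Proposition~\ref{prp:tame maps are extendable}, and use Lemma~\ref{lmm:uniqueness criterion for tame maps} to control when two tame extensions agree. Concretely: exactness at $\pi_n(A,x_0)$ — if $i_*[\alpha] = 0$, a nullhomotopy of $\alpha$ in $X$ is a map $H\colon I^n \times I \to X$ which we regard, after tamifying in the last coordinate, as a map of triples $(I^{n+1},\bI^{n+1},J^n)\to(X,A,x_0)$, and $\varDelta$ of its class is $[\alpha]$. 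Exactness at $\pi_n(X,x_0)$ — if $j_*[\beta]=0$ via a homotopy of triples $G$ from $\beta$ to a map landing in $A$, tamify $G$ and restrict to the appropriate face to produce the required element of $\pi_n(A,x_0)$ whose image is $[\beta]$. Exactness at $\pi_n(X,A,x_0)$ — given $[\phi]$ with $\varDelta[\phi]=0$, i.e.\ $\phi|I^{n-1}$ bounds in $A$, glue $\phi$ with that nullhomotopy along $I^{n-1}\times\{0\}$ (after tamifying both pieces near the common face so the glued map is smooth, by Lemma~\ref{lmm:uniqueness criterion for tame maps}) to deform $\phi$ rel $\bI^n$ to a map sending all of $J^{n-1}$-plus-bottom into $x_0$, i.e.\ a representative of something in the image of $j_*$.

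The main obstacle will be the gluing and smoothness bookkeeping in each of these steps: unlike in $\Top$, one cannot simply paste continuous maps along a face, so every construction that classically proceeds by "define the map to be this on one piece and that on another" must be arranged so that both pieces are $\epsilon$-tame near the interface and agree on the $\epsilon$-chamber there, whereupon Lemma~\ref{lmm:uniqueness criterion for tame maps} and the locality axiom for diffeologies guarantee smoothness of the pasted map. A secondary point requiring care is basepoint/face coherence: the representatives of relative homotopy classes must be maps of \emph{triples} $(I^n,\bI^n,J^{n-1})\to(X,A,x_0)$, so throughout one must keep track of which faces map to $A$, which to $x_0$, and ensure the tamings preserve these conditions — here the refined "$\epsilon'$-tame on $\bI^{n-1}\times\{0\}$" clause of Proposition~\ref{prp:tame maps are extendable} is exactly what lets the extensions respect the triple structure. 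Once these tameness precautions are in place, the diagram chases are formally identical to the classical ones (cf.\ \cite[5.19]{Zem}).
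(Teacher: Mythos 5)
Your proposal is correct and follows essentially the same route as the paper, which simply remarks that since every class has a tame representative (Lemma~\ref{lmm:taming of maps}, Proposition~\ref{prp:tame maps are extendable}), the classical diagram-chase proof of the long exact sequence (cf.\ \cite[5.19]{Zem}) carries over verbatim. Your additional bookkeeping with the uniqueness criterion for tame maps and the approximate retractions is exactly the detail the paper leaves implicit, so there is no substantive difference in approach.
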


{%
There is an alternative interpretation of $\pi_n(X,x_0)$ in
terms of basepointed maps $(\bI^{n+1},e) \to (X,x_0)$, where
$e = (1,\cdots,1) \in \bI^{n+1}$.

\begin{lmm}
  \label{lmm:another definition of homotopy groups}
  For every $n \geq 0$,
  there is a natural isomorphism
  \[
    \pi_n(X,x_0) \cong [\bI^{n+1},e \semicolon X,x_0].
  \]
\end{lmm}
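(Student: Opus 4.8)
The plan is to establish the isomorphism $j^*\circ(i^*)^{-1}$ by combining two facts that are already essentially set up in the preceding discussion: that $i^*\colon[\partial I^{n+1},J^n;X,x_0]\to[I^n,\partial I^n;X,x_0]$ is a bijection with an explicit inverse, and that $j^*\colon[\partial I^{n+1},J^n;X,x_0]\to[\partial I^{n+1},e;X,x_0]$ is a bijection. The first is immediate: the formula for $f'$ given above defines a map $[I^n,\partial I^n;X,x_0]\to[\partial I^{n+1},J^n;X,x_0]$, and one checks directly that $f'|I^n\times\{0\}=f$ and that restricting a map $(\partial I^{n+1},J^n)\to(X,x_0)$ to $I^n\times\{0\}$ followed by this construction returns a homotopic map (the homotopy being supported on the $J^n$ part, which goes to $x_0$ throughout). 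So the real content is the bijectivity of $j^*$.

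For surjectivity of $j^*$, I would argue exactly as sketched in the text: given a class in $[\partial I^{n+1},e;X,x_0]$, represent it by a tame map $f\colon(\partial I^{n+1},e)\to(X,x_0)$, which is possible by Lemma~\ref{lmm:taming of maps} applied to the cubical subcomplex $\partial I^{n+1}$ (and noting $\{e\}$ is a vertex, on which any map is automatically tame). Since $I^{n-1}\times\{1\}$ is a deformation retract of $J^n$ and contracts to $e$, I would first produce a \emph{tame} contracting homotopy $r\colon J^n\times I\to J^n$ of $J^n$ onto $e$ — here tameness near the $I=\{0,1\}$ ends is arranged by reparametrizing with $\lambda$, as in the construction of $F*G$. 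Then $f|_{J^n}\circ r\colon J^n\times I\to X$ is a tame homotopy from $f|_{J^n}$ to the constant map at $x_0$; applying Proposition~\ref{prp:tame maps are extendable} (the relative/fiberwise version) together with Lemma~\ref{lmm:taming of maps} lets me extend this homotopy of $J^n$ to a homotopy $H\colon\partial I^{n+1}\times I\to X$ with $H_0=f$, $H|_{\{e\}\times I}=x_0$, and $H_1=:g$ satisfying $g(J^n)=x_0$. Then $g$ represents a class in $[\partial I^{n+1},J^n;X,x_0]$ with $j^*[g]=[g]=[f]$, proving surjectivity.

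For injectivity, the same mechanism runs one dimension up: suppose $g_0,g_1\colon(\partial I^{n+1},J^n)\to(X,x_0)$ and $K\colon\partial I^{n+1}\times I\to X$ is a homotopy rel $e$ between them (i.e.\ a homotopy in $[\partial I^{n+1},e;X,x_0]$). We want to upgrade $K$ to a homotopy that is constant equal to $x_0$ on all of $J^n\times I$. After making $K$ tame, consider its restriction $K|_{J^n\times I}\colon J^n\times I\to X$, a tame homotopy between the constant maps $g_0|_{J^n}=g_1|_{J^n}=x_0$ — but through possibly non-constant maps; precomposing with the tame contraction $r$ (fixing the two ends $0,1$ of the homotopy parameter) and invoking Proposition~\ref{prp:tame maps are extendable} again, now on the pair $(\partial I^{n+1}\times I,\ (\partial I^{n+1}\times\partial I)\cup(J^n\times I))$ viewed as an extension problem over the analogue of $J^{n}\to I^{n}$ in one higher dimension, produces the desired deformation of $K$ rel its ends. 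This shows $[g_0]=[g_1]$ in $[\partial I^{n+1},J^n;X,x_0]$, so $j^*$ is injective.

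The main obstacle is the bookkeeping in the extension/taming arguments for $j^*$ — specifically, organizing the extension problems so that Proposition~\ref{prp:tame maps are extendable} genuinely applies, since that proposition is stated for the specific inclusion $J^{n-1}\hookrightarrow I^n$, and for injectivity one needs the version one dimension higher and with an extra collar parameter. I expect this requires identifying $\partial I^{n+1}$ (resp.\ $\partial I^{n+1}\times I$) with a cube-minus-open-face pair via a diffeomorphism compatible with tameness, so that the approximate retraction $R_\epsilon$ of Lemma~\ref{lmm:approximate retraction exists} can be transported over. Naturality in $(X,x_0)$ is then automatic, since every construction used — the formula for $f'$, the fixed tame contraction $r$, the extension operator — is defined independently of $X$ and commutes with postcomposition by smooth maps $X\to Y$.
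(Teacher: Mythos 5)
Your proposal is correct and follows essentially the same route as the paper: the explicit formula for $f'$ inverts $i^*$, and surjectivity of $j^*$ is obtained exactly as in the text via a tame contracting homotopy of $J^n$ onto $e$ combined with Lemma~\ref{lmm:taming of maps} and Proposition~\ref{prp:tame maps are extendable}. Your injectivity sketch (running the same tame homotopy-extension mechanism one dimension up) is likewise the argument the paper intends by its ``similarly,'' with your flagged bookkeeping being no more than what the paper itself leaves implicit.
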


\begin{proof}
  Consider the commutative diagram
  \[
    \xymatrix{%
      [I^n,\bI^n \semicolon X,x_0] & [\bI^{n+1},J^n
      \semicolon X,x_0] \ar[l]_-{i^*} \ar[r]^-{j^*} &
      [\bI^{n+1},e \semicolon X,x_0]
      \\
      [I^n/\bI^n,* \semicolon X,x_0] \ar[u]_-{\cong} &
      [\bI^{n+1}/J^n,* \semicolon X,x_0] \ar[l]_-{\cong}
      \ar[u]_-{\cong} }%
  \]
  induced by the evident inclusions and projections.
  One easily observes that the vertical arrows are
  isomorphisms by the property of subductions, and the lower
  horizontal arrow is an isomorphism induced by a
  diffeomorphism.  It follows by the commutativity of the
  left hand square that $i^*$ is an isomorphism.  
  Hence,
  to prove the lemma\textcolor{red}{,}
  it suffice to show that $j^*$ is also an isomorphism.

  To see that $j_*$ is surjective, take a tame map
  $f \colon (\bI^{n+1},e) \to (X,x_0)$.  Since
  $I^{n-1} \times \{1\}$ is a deformation retract of $J^n$
  and is contractible to $e$, there exists a tame
  contracting homotopy $r \colon J^n \times I \to J^n$ of
  $J^n$ onto $e$.
  By applying Theorem~\ref{thm:WHEP} to the map $f$ and the
  homotopy $h = (f|J^n) \circ r \colon J^n \times I \to X$,
  we obtain a homotopy $f \simeq g$ relative to $e$ such
  that $g(J^n) = x_0$.  Hence we have
  $[f] = j^*([g]) \in [\bI^{n+1},e \semicolon X,x_0]$,
  meaning that $j^*$ is surjective.

  Injectivity of $j^*$ is proved as follows.  Let $f$ and
  $g$ be two tame maps $(\bI^{n+1},J^n) \to (X,x_0)$ such
  that $j^*([f]) = j^*([g])$.  Then there are a tame
  homotopy $H \colon (\bI^{n+1},e) \times I \to (X,x_0)$
  between $f \circ j$ and $g \circ j$, and a tame homotopy
  (of homotopies) from $H|(J^n,e) \times I$ to the trivial
  homotopy
  $c_{x_0} \colon (J^n,e) \times I \to (x_0,x_0) \subset
  (X,x_0)$ induced by $r$.  Thus, by Theorem~\ref{thm:WHEP}
  again, $H$ is homotopic to $H'$ such that
  $H'|(J^n,e) \times I = c_{x_0}$, implying that
  $H'_0 \simeq H'_1 \colon (\bI^{n+1},J^n) \to (X,x_0)$ and
  hence $[f] = [H'_0] = [H'_1] = [g]$.
\end{proof}
}%

Suppose $f \colon (\bI^{n+1},e) \to (X,x_0)$ is a tame representative
of an element of $\pi_n(X,x_0)$, and $\ell \colon I \to X$ a tame path
from $x_0$ to $x_1$.
Then, by applying Proposition~\ref{prp:tame maps are extendable} to the tame homotopy
$e \times I \to X$ given by $l$, we obtain a homotopy $f \simeq g$
such that $g(e) = x_1$ holds.
Thus we can construct
\[
  \ell_{\sharp} \colon \pi_n(X,x_0) \to \pi_n(X,x_1)
\]
to be the map which takes $[f] \in \pi_n(X,x_0)$ to the class
$[g] \in \pi_n(X,x_1)$.

We leave it to the reader to verify the following.

\begin{prp}
  \label{prp:invariance under basepoint change}
  To every tame path $\ell \colon I \to X$ joining $x_0$ to $x_1$,
  there attached a group isomorphism
  $\ell_{\sharp} \colon \pi_n(X,A,x_0) \to \pi_n(X,x_1)$.
  If $\ell' \colon I \to X$ is another tame path joining $x_1$ to
  $x_2$ then we have
  $(\ell * \ell')_{\sharp} = \ell'_{\sharp} \circ \ell_{\sharp}$.
\end{prp}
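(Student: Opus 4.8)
Via the identification of Lemma~\ref{lmm:another definition of homotopy groups} we may work in the cube model $\pi_n(X,x_0)=[I^n,\bI^n\semicolon X,x_0]$, in which the construction preceding the statement reads: given a tame map $f\colon(I^n,\bI^n)\to(X,x_0)$ and a tame path $\ell$ from $x_0$ to $x_1$, take the $\epsilon$-tame map on the copy $I^n\times\{0\}\cup\bI^n\times I$ of $J^n$ inside $I^{n+1}$ that equals $f$ on $I^n\times\{0\}$ and $\ell\circ\mathrm{pr}_I$ on $\bI^n\times I$, extend it to a $\sigma$-tame map $G\colon I^{n+1}\to X$ by Proposition~\ref{prp:tame maps are extendable}, and set $g=G|_{I^n\times\{1\}}$; then $\ell_\sharp[f]=[g]$. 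Here $G$ is itself a tame homotopy $f\simeq g$ through maps $(I^n,\bI^n)\to(X,\ell(s))$, and this is used repeatedly below. The plan is to verify, in order: (i) $\ell_\sharp$ is well defined; (ii) $\ell_\sharp$ is a group homomorphism for $n\ge 1$; (iii) the constant path induces the identity and $\ell_\sharp$ depends only on the class of $\ell$ rel $\bI$; (iv) $(\ell*\ell')_\sharp=\ell'_\sharp\circ\ell_\sharp$; and then deduce from (iii) and (iv) that the reversed path $\bar\ell$, $\bar\ell(t)=\ell(1-t)$, gives a two-sided inverse of $\ell_\sharp$.

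Step (i) is the crux. Suppose $f_0\simeq f_1$ rel $\bI^n$ through a tame homotopy $F\colon I^n\times I\to X$, and $H_0,H_1\colon I^{n+1}\to X$ are tame extending homotopies as above, ending at $g_0$ and $g_1$. In $I^{n+2}=I^n\times I\times I$ with the last two coordinates $(s,r)$, place $H_0$ on $\{r=0\}$, $H_1$ on $\{r=1\}$, $F$ on $\{s=0\}$, and $(t,s,r)\mapsto\ell(s)$ on the walls $\bI^n\times I\times I$; these agree on overlaps and fill every facet of $I^{n+2}$ except $\{s=1\}$, that is, a copy of $J^{n+1}$. After using Lemma~\ref{lmm:taming of maps} to make the pieces uniformly tame and to agree near their intersections (so the assembled map is smooth and $\epsilon$-tame), Proposition~\ref{prp:tame maps are extendable} extends this map over $I^{n+2}$, and the restriction to $\{s=1\}$ is a homotopy $g_0\simeq g_1$ rel $\bI^n$ into $(X,x_1)$. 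Taking $F$ constant (so $f_0=f_1$) gives independence of the extending homotopy; the general case gives independence of the chosen representative.

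Steps (ii)--(iv) then exploit that the construction respects concatenations. For (ii), if $G_\phi,G_\psi\colon I^{n+1}\to X$ are tame extending homotopies for tame representatives $\phi,\psi$, then $G_\phi*G_\psi$, reparametrized by $\lambda$ in the $t_1$-coordinate exactly as in the definition of $[\phi]\cdot[\psi]$, is again tame and is an extending homotopy for $\phi*\psi$; restricting to $\{s=1\}$ and invoking (i) gives $\ell_\sharp([\phi]\cdot[\psi])=\ell_\sharp[\phi]\cdot\ell_\sharp[\psi]$. For (iii), the constant path admits the extending homotopy $(t,s)\mapsto f(t)$, whence $c_{x_0,\sharp}=\id$; invariance under a tame homotopy $\ell_0\simeq\ell_1$ rel $\bI$ is the same one-dimension-up argument as in (i), now feeding the homotopy of paths into the walls $\bI^n\times I\times I$. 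For (iv), build a tame extending homotopy $G_1$ for $\ell$ ending at $g$ and then a tame extending homotopy $G_2$ for $\ell'$ ending at $h$; the concatenation $G_1*G_2$ in the $s$-direction is tame, restricts to $f$ and $h$ at $s=0$ and $s=1$, and restricts to $(\ell*\ell')\circ\mathrm{pr}_I$ on the walls by the very definition of $\ell*\ell'$, so by (i) $(\ell*\ell')_\sharp[f]=[h]=\ell'_\sharp(\ell_\sharp[f])$. Finally, $\ell*\bar\ell$ is a tame loop at $x_0$ which is null-homotopic rel $\bI$ through tame paths (apply Lemma~\ref{lmm:taming of maps} to a standard nullhomotopy, which is already tame on $\bI^2$), so (iii) and (iv) give $\bar\ell_\sharp\circ\ell_\sharp=(\ell*\bar\ell)_\sharp=c_{x_0,\sharp}=\id$, and symmetrically $\ell_\sharp\circ\bar\ell_\sharp=\id$; hence $\ell_\sharp$ is an isomorphism.

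I expect the only real obstacle to lie in step (i): one must pick the tameness parameters along a descending chain and arrange the $\epsilon$-chambers of all the facets being glued to coincide --- precisely the bookkeeping already carried out in the proof of Proposition~\ref{prp:tame maps are extendable} --- so that the map assembled on the copy of $J^{n+1}$ is genuinely smooth and Proposition~\ref{prp:tame maps are extendable} applies. Once this is in place, every remaining step reduces, through the concatenation trick, to the familiar elementary argument for topological spaces (cf.\ \cite{Spanier}, \cite[Chapter 5]{Zem}).
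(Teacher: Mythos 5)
Your proposal is essentially correct, but note first that the paper gives no proof at all here (``We leave it to the reader to verify the following''), so there is nothing to match line by line; and second that your opening sentence slightly misreads the paper's construction: the text defines $\ell_{\sharp}$ in the model $[\bI^{n+1},e\semicolon X,x_0]$ of Lemma~\ref{lmm:another definition of homotopy groups}, by applying Proposition~\ref{prp:tame maps are extendable} to the tame homotopy $e\times I\to X$ given by $\ell$, whereas you work directly in the cube model $[I^n,\bI^n\semicolon X,x_0]$ and extend over the copy $I^n\times\{0\}\cup\bI^n\times I$ of $J^n$. The two constructions correspond under the isomorphism of Lemma~\ref{lmm:another definition of homotopy groups} (and in any case your well-definedness argument makes your $\ell_{\sharp}$ a legitimate definition), so this is a genuinely equivalent but differently packaged route: you run the classical Spanier-style argument --- filling a copy of $J^{n+1}$ inside $I^{n+2}$ for well-definedness and homotopy invariance in $\ell$, concatenating extending homotopies in $t_1$ for the homomorphism property and in $s$ for $(\ell*\ell')_{\sharp}=\ell'_{\sharp}\circ\ell_{\sharp}$, and inverting via $\bar\ell$ --- with the paper's tame-map technology (Lemmas~\ref{lmm:uniqueness criterion for tame maps}, \ref{lmm:taming of maps} and Proposition~\ref{prp:tame maps are extendable}) replacing the topological retraction of $I^{n}$ onto $J^{n-1}$. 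Two small remarks: the statement's ``$\pi_n(X,A,x_0)$'' is evidently a typo for $\pi_n(X,x_0)$, and your treatment of the absolute groups is the right reading; and in step (i) the appeal to Lemma~\ref{lmm:taming of maps} to make the pieces ``uniformly tame'' is not really needed --- since $\epsilon$-tameness implies $\sigma$-tameness for $\sigma<\epsilon$, taking the minimum of the finitely many constants suffices, and smoothness of the assembled map near the seams follows from tameness exactly as in the paper's own gluings (e.g.\ in Example~\ref{examples of fibrations}(3) and Proposition~\ref{prp:trivial J-fibration is I-fibration}). With that bookkeeping made explicit, your argument is complete.
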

%
%
%
%
%
%
%
%
%
\section{Fibrations}
In this section, we introduce the notion of smooth fibrations by
mimicking Serre fibrations in $\Top$.
%

\begin{dfn}
  \label{dfn:admissibility}
  Let $K$ be a subset of $I^n$ and let $0 < \epsilon \leq 1/2$.
  Then a smooth map $f \colon K \to X$ is said to be
  \emph{$\epsilon$-admissible} if its restriction $f|K \cap F$ is
  $\epsilon^{\dim{F}}$-tame for every positive dimensional face
  $F$ of $I^n$.
\end{dfn}

Clearly, $\epsilon$-tameness implies $\epsilon$-admissibility
and, conversely, $\epsilon$-admissibility implies
$\epsilon^d$-tameness for some $d \leq n$.
But admissibility has an advantage over tameness as exhibited by
the next proposition and its corollary.

\begin{prp}
  \label{prp:admissible replacement}
  Let $K$ be a cubical subcomplex of $I^n$, and
  $f \colon K \to X$ be a smooth map.  Suppose $f$ is
  $\epsilon$-admissible on a cubical subcomplex $L$ of $K$.  Then
  there is a homotopy $f \simeq g$ relative to $L$ such that $g$
  is $\epsilon$-admissible.
\end{prp}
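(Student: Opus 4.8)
The plan is to induct on the dimension of the faces of $K$, pushing $f$ to be tame on each face with the appropriate power of $\epsilon$ one dimension at a time, while never disturbing the faces where admissibility has already been achieved. Let me set up the induction. For $0 \le k \le n$, let $K^{(k)}$ denote the $k$-skeleton of $K$, i.e.\ the union of the faces of $K$ of dimension $\le k$, and similarly $L^{(k)}$ for $L$. The claim to be proved by induction on $k$ is: there is a homotopy $f \simeq f_k$ relative to $L \cup K^{(k-1)}$ such that, for every face $F$ of $K$ with $\dim F \le k$, the restriction $f_k|K \cap F$ is $\epsilon^{\dim F}$-tame. The base case $k = 0$ is vacuous (zero-dimensional faces carry no tameness condition), so we may take $f_0 = f$. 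The case $k = n$ gives an $\epsilon$-admissible map $g = f_n$ together with a homotopy $f \simeq g$ relative to $L$, which is exactly what is wanted.

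For the inductive step, assume $f_{k-1}$ has been constructed. For each $k$-dimensional face $F$ of $K$ that is \emph{not} already contained in $L$ (on those inside $L$ the map is already $\epsilon$-admissible, hence $\epsilon^k$-tame on $F$, by hypothesis), we apply Lemma~\ref{lmm:taming of maps} to the smooth map $f_{k-1}|F \colon F \to X$. Concretely, identifying $F$ with a copy of $I^k$, we take the subcomplex $L \cap F$ together with $\bI^k = F^{(k-1)}$, on which $f_{k-1}$ is already $\epsilon^{k-1}$-tame (by the inductive hypothesis applied to the proper faces of $F$, noting $\epsilon^{k-1} \ge \epsilon^k$ so the tameness on $(k{-}1)$-faces is in particular $\epsilon^k$-tameness on those faces, and taming relative to a set only needs the map to already be tame \emph{to the required degree} there — here we only need that $f_{k-1}$ is $\epsilon^k$-tame on $\bI^k \cup (L\cap F)$, which holds since $\epsilon^{k-1}$-tameness implies $\epsilon^k$-tameness). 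Lemma~\ref{lmm:taming of maps} then produces a homotopy from $f_{k-1}|F$ to an $\epsilon^k$-tame map, relative to $(L \cap F) \cup \bI^k \cup F(\epsilon^k)$, hence in particular relative to $\partial F \cup (L \cap F)$. Since the homotopy is stationary on the boundary of each such $F$, the homotopies on the various $k$-faces agree on overlaps and with the constant homotopy on $K^{(k-1)}$ and on $L$; by the gluing/locality property of diffeologies they assemble into a single homotopy $f_{k-1} \simeq f_k$ on $K$, relative to $L \cup K^{(k-1)}$. Composing with the homotopy $f \simeq f_{k-1}$ (via the concatenation $*$ of homotopies defined in Section~2, which is again relative to $L \cup K^{(k-2)} \supseteq$ nothing problematic here since both pieces are stationary on $L \cup K^{(k-1)}$) completes the step.

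The main obstacle is bookkeeping rather than any deep point: one must check that taming a $k$-face to degree $\epsilon^k$ does not destroy the $\epsilon^j$-tameness already arranged on the lower-dimensional faces it contains. This is handled by the ``relative'' clause of Lemma~\ref{lmm:taming of maps}, since the taming homotopy on $F$ is stationary on all of $\partial F$; and it does not destroy tameness on higher-dimensional faces not yet treated because those have not yet been constrained. The one genuinely delicate verification is that the taming operator $T_{\sigma,\epsilon}$ used inside Lemma~\ref{lmm:taming of maps}, applied face by face with $\sigma = \epsilon^k$, is compatible with the already-chosen tameness degrees — but since $\epsilon^{k} \le \epsilon^{k-1}\le \cdots \le \epsilon$, each stage only strengthens nothing and weakens in a controlled, nested way, and Lemma~\ref{lmm:uniqueness criterion for tame maps} guarantees the glued map is well-defined. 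Finally, the homotopy $f \simeq g$ is relative to $L$ because at every stage the homotopy is relative to (at least) $L$.
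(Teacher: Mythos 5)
Your induction scheme (tame the $k$-faces one dimension at a time, keeping everything already fixed) is natural, but it has a genuine gap at the gluing step, and this is precisely the point where the paper takes a different route. On each $k$-face $F$ you use the straight-line homotopy of Lemma~\ref{lmm:taming of maps}, $H_F(v,t)=f_{k-1}\bigl((1-t)v+tT^k_{\sigma,\epsilon'}(v)\bigr)$, and then assert that the pieces on the various $k$-faces, together with the constant homotopy on $L\cup K^{(k-1)}$, ``assemble into a single homotopy by the gluing/locality property of diffeologies.'' That is not a valid appeal: the locality axiom of a diffeology concerns open covers of the domain of a plot, and in $\Diff$ smooth maps do \emph{not} in general glue along closed subcomplexes. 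The device that makes such gluings work throughout this paper is tameness of each piece near the overlap (so that each piece locally factors through a projection onto the common part), but your pieces $H_F$ are \emph{not} tame on $F\times I$ --- $f_{k-1}$ is arbitrary on the interior of $F$ near $\partial F$, and the straight-line homotopy is not tame in $t$ either --- so the tameness-based gluing argument is unavailable, and the smoothness of your assembled homotopy (in particular across a face of $L$ adjacent to a face you are deforming) is unproved. This is exactly why the paper's proof does \emph{not} keep the homotopy stationary on the lower skeleton: instead it builds, over each face $F$ not in $L$, a \emph{tame} homotopy on the whole prism $F\times I$ by extending the already-constructed homotopy on $\partial F\times I$ together with $f$ on $F\times\{0\}$ (this union is linearly diffeomorphic to a $J$-type subcomplex) via Proposition~\ref{prp:tame maps are extendable}, with $\sigma'=\epsilon^{\dim F}$ on the free face so that the time-one map is $\epsilon$-admissible; since every piece is tame, the union over faces is smooth, and the resulting homotopy is relative to $L$ only, which is all the proposition requires.

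A secondary, more local error: you claim the relative clause of Lemma~\ref{lmm:taming of maps} ``only needs the map to already be tame to the required degree,'' i.e.\ $\epsilon^k$-tame on $\partial F\cup(L\cap F)$. The lemma in fact requires $\epsilon$-tameness at the \emph{larger} parameter (the one indexing the chamber), because the homotopy $f((1-t)v+tT_{\sigma,\epsilon}(v))$ is stationary at $v$ only if $f$ is constant along displacements of size up to $\epsilon$, not $\sigma$. This particular slip is repairable --- take the lemma's parameters to be $(\sigma,\epsilon)=(\epsilon^{k},\epsilon^{k-1})$, since your inductive hypothesis does give $\epsilon^{k-1}$-tameness on all faces of dimension $\le k-1$ --- but as written the justification is wrong. (Your claim that the accumulated homotopy $f\simeq f_k$ is relative to $L\cup K^{(k-1)}$ is also inaccurate, since the earlier stages move the lower skeleton; only ``relative to $L$'' survives concatenation, though that is all you need.) The essential missing ingredient, however, is the smoothness of the glued homotopy, and without replacing your stationary-on-the-boundary straight-line homotopies by tame ones extended prism-by-prism (as in the paper), the proof does not go through.
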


\begin{proof}
  It is easy to construct a homotopy $f \simeq f' \rel{L}$ such
  that $f'$ is $\sigma$-tame for $\sigma < \epsilon^{\dim{L}}$
  (cf.\ Lemma~\ref{lmm:taming of maps}).  Hence we may assume
  from the beginning that $f$ is a tame map.
  For $0 \leq j \leq \dim K$, let $\bar{K}^j = L \cup K^j$ be the
  union of $L$ and the $j$-skeleton of $K$.
  Starting from the constant homotopy
  $\tilde{h}^0 \colon \bar{K}^0 \times I \to X$, we inductively
  construct a tame homotopy
  $\tilde{h}^j \colon \bar{K}^j \times I \to X$ from
  $f|\bar{K}^j$ to an $\epsilon$-admissible map $g^j$ relative to
  $L$.

  Suppose $\tilde{h}^{j-1}$ exists.  Let $F$ be a $j$-dimensional
  face not contained in $L$.  As
  $\partial F \subset \bar{K}^{j-1}$, there is a tame map
  \( h^F \colon (\partial F \times I) \cup (F \times \{0\}) \to
  X, \) which takes $(t,u)$ to $\tilde{h}^{j-1}(t,u)$ if
  $t \in \partial F$ and to $f(t)$ if $u = 0$.
  But as $g^{j-1}|\partial F$ is $\epsilon^{j-1}$-tame and
  $(\partial F \times I) \cup (F \times \{0\})$ is linearly
  diffeomorphic to $J^{j-1}$, we can apply
  Proposition~\ref{prp:tame maps are extendable} with
  sufficiently small $\sigma$ and $\sigma' = \epsilon^{j}$ to
  obtain a tame extension $\tilde{h}^F \colon F \times I \to X$
  such that $g^F = \tilde{h}^F|F \times \{1\} \colon F \to X$ is
  $\epsilon$-admissible.
  Thus, if we define
  $\tilde{h}^j \colon \bar{K}^j \times I \to X$ to be the union
  $\bigcup_F \tilde{h}^F$, where $F$ runs through $j$-dimensional
  faces of $K$ not contained in $L$, then $\tilde{h}^j$ gives a
  tame homotopy $f|\bar{K}^j \simeq g^j \rel{L}$ such that
  $g^j = \bigcup_F g^F$ is $\epsilon$-admissible, proving the
  induction step.
\end{proof}

This, together with Proposition~\ref{prp:tame maps are
  extendable}, implies the following.

\begin{crl}
  \label{crl:admissible maps are extendable}
  Any $\epsilon$-admissible map $f \colon J^{n-1} \to X$ can be
  extended to an $\epsilon$-admissible map $I^n \to X$.
\end{crl}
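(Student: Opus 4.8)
The plan is to upgrade the extension result of Proposition~\ref{prp:tame maps are extendable} — which produces a \emph{tame} extension — to one that preserves admissibility, by combining it with the admissible-replacement machinery of Proposition~\ref{prp:admissible replacement}. Suppose $f \colon J^{n-1} \to X$ is $\epsilon$-admissible. First I would note that admissibility of $f$ in particular makes $f$ $\sigma_0$-tame as a map on the whole complex $J^{n-1}$ for some small $\sigma_0$ (using the observation immediately following Definition~\ref{dfn:admissibility} that $\epsilon$-admissibility implies $\epsilon^d$-tameness for some $d \le n-1$, with $d = n-1$ the relevant top-dimensional bound). So Proposition~\ref{prp:tame maps are extendable} applies and yields a tame extension $g_0 \colon I^n \to X$ of $f$. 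The trouble is that $g_0$ need not be admissible: tameness controls behaviour near \emph{all} coordinate slabs uniformly, whereas admissibility demands the much sharper condition that the restriction to each face $F$ be $\epsilon^{\dim F}$-tame, and a bare tame extension only gives $\epsilon^n$-tameness on the top cell, not the face-by-face hierarchy.

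To fix this I would apply Proposition~\ref{prp:admissible replacement} to the cubical complex $K = I^n$, the map $g_0$, and the subcomplex $L = J^{n-1}$, provided $g_0|J^{n-1} = f$ is already $\epsilon$-admissible on $L$ — which is exactly the hypothesis. That proposition then delivers a homotopy $g_0 \simeq g$ relative to $J^{n-1}$ with $g$ an $\epsilon$-admissible map $I^n \to X$. Since the homotopy is rel $J^{n-1}$, the new map $g$ still restricts to $f$ on $J^{n-1}$, so $g$ is the desired $\epsilon$-admissible extension. This gives the corollary.

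The one point requiring care — and the main (mild) obstacle — is the compatibility of the $\epsilon$'s: Proposition~\ref{prp:admissible replacement} needs $f$ to be \emph{genuinely} $\epsilon$-admissible on $L = J^{n-1}$, not $\epsilon'$-admissible for some unrelated $\epsilon'$, and the tame extension step of Proposition~\ref{prp:tame maps are extendable} is stated as producing a $\sigma$-tame extension only for $\sigma < \epsilon$ (strictly), so one must track that the passage through the tame extension does not degrade the admissibility datum already present on $J^{n-1}$. But since the tame extension of Proposition~\ref{prp:tame maps are extendable} is built so as to restrict to $f$ on $J^{n-1}$ (it extends $f$), the admissibility of $g_0|J^{n-1}$ is literally that of $f$, with the same $\epsilon$; no degradation occurs. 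Hence the hypotheses of Proposition~\ref{prp:admissible replacement} are met verbatim, and the argument closes. An entirely parallel remark shows that, with more bookkeeping of the $\sigma,\sigma'$ parameters on $I^{n-1}\times\{0\}$ as in the second half of Proposition~\ref{prp:tame maps are extendable}, one could also record the behaviour of $g$ near the bottom face, but that refinement is not needed for the statement as given.
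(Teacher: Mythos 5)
Your argument is correct and is essentially the paper's intended proof: the paper derives this corollary precisely by combining Proposition~\ref{prp:tame maps are extendable} (extend the tame map $f$ to a tame $g_0$ on $I^n$) with Proposition~\ref{prp:admissible replacement} applied to $K=I^n$, $L=J^{n-1}$ (replace $g_0$ rel $J^{n-1}$ by an $\epsilon$-admissible map). Your bookkeeping of the tameness constants and the observation that the rel-$J^{n-1}$ homotopy preserves the restriction to $f$ match what the paper leaves implicit.
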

Consider the following classes of inclusions:
\[
  \mathcal{I} = \{i_n \colon \partial I^{n} \to I^{n} \mid n \geq
  0 \}, \quad \mathcal{J} = \{j_n \colon J^{n-1} \to I^{n} \mid n
  \geq 1\}.
\]

\begin{dfn}
  \label{dfn:K-fibration}
  Let $\K$ be either $\I$ or $\J$.  A smooth map
  $p \colon X \to Y$ is called a \emph{$\K$-fibration} if for
  every member $K^{n-1} \to I^n$ of $\K$ (i.e.\ $K^{n-1}=\partial I^{n}$ or $J^{n-1}$)
  and every pair of $\epsilon$-admissible maps
  $f \colon K^{n-1} \to X$ and $g \colon I^n \to Y$ satisfying
  $p \circ f = g|K^{n-1}$, there exists an
  $\epsilon$-admissible map $h \colon I^n \to X$ which makes the
  two triangles in the diagram below commutative:
  \begin{equation}
    \label{eqn:RLP for fibration}
    \vcenter{%
      \xymatrix{%
        K^{n-1} \ar[d] \ar[r]^f & X \ar[d]^p
        \\
        I^n \ar[r]^g \ar@{.>}[ru]^{h} & Y.} }%
  \end{equation}
\end{dfn}
In particular,
$\J$-fibrations are analogy of Serre fibrations in $\Top$.

\begin{ex}\label{examples of fibrations}
(1) It follows by Corollary \ref{crl:admissible maps are extendable} that for any diffeological space $X$ the constant map $X \to \ast$ is a $\J$-fibration.

(2)
If $p \colon E \to B$ is a diffeological fiber bundle with fiber $F$ 
then its pullback by a smooth map from $I^{n}$ to $B$ is 
trivial (cf.~{\cite[8.19,\ Lemma 2]{Zem}}).
But (1) implies that a trivial fiber bundle is a $\J$-fibration,
hence so is $p$.

(3)
Given a diffeological space $X$ with basepoint $x_{0}$,
let $P(X,x_{0})$ be the subset of $C^{\infty}((I,\{1\}),(X,x_{0}))$ consisting of tame paths $l \colon I \to X$ satisfying $l(1)=x_{0}$.
Then the map $p \colon P(X,x_{0}) \to X$,
which takes a path $l$ to its initial point $l(0)$,
is a $\J$-fibration.
To see this,
let $u \colon J^{n-1} \to P(X,x_{0})$ and $v \colon I^{n} \to X$ be $\epsilon$-admissible maps satisfying $p \circ u=v|J^{n-1}$.
Let $K=J^{n-1} \times I \cup I^{n} \times \{0,1\}$ and $u^{\prime} \colon K \to X$ be a tame map which takes $(t,s) \in K$ to $u(t)(s)$ 
if $t \in J^{n-1}$,
to $v(t)$ if $s =0$,
and to $x_{0}$ if $s=1$.
To obtain an $\epsilon$-admissible lift $I^{n} \to P(X,x_{0})$ of $(u,v)$,
it suffices to extend $u^{\prime}$ to a tame map $\tilde{u} \colon I^{n} \times I \to X$ which is $\epsilon$-admissible with respect to the 
first $n$ coordinates.
We accomplish this by extending $u^{\prime}$ in several steps.
Let $A= I^{n}(\epsilon^{n}) \times I,\ B=I^{n-1}(\epsilon^{n}) \times [0,1-\epsilon^{n}] \times I$,
and $C=I^{n} \times I- {\rm Int}B$.
As we have $C=K \cup (\bar{J}^{n-1} \times I)$,
where $\bar{J}^{n-1}$ is the closure of the $\epsilon^{n}$-neighborhood of $J^{n-1}$,
and $v$ is $\epsilon^{n}$-tame,
$u^{\prime}$ can be extended to a tame map $\tilde{u}^{\prime} \colon C \to X$ in an evident manner.
But as $(A,A \cap C) \cong (I^{n+1}, J^{n})$ and $\tilde{u}^{\prime}$ is tame on $A \cap C$,
there exists an extension $\tilde{u}^{\prime \prime} \colon A \to X$ of $\tilde{u}^{\prime}|A \cap C$ having enough tameness on 
$A \cap (I^{n-1} \times \{\epsilon^{n}\} \times I)$ (cf.~Proposition \ref{prp:tame maps are extendable}).
It is now clear that $\tilde{u}^{\prime \prime}$ can be extended trivially to $\tilde{u}^{\prime \prime \prime} \colon B \to X$,
and the resulting map $\tilde{u}=\tilde{u}^{\prime} \cup \tilde{u}^{\prime \prime \prime} \colon I^{n+1}=C \cup B \to X$ extends $u^{\prime}$ 
and is $\epsilon$-admissible with respect to the first $n$ coordinates as desired.
\end{ex}

We say that a smooth map $p \colon X \to Y$ is a weak homotopy
equivalence if the induced map
$p_{\ast} \colon \pi_{n}(X,x) \to \pi_{n}(Y,p(x))$ is a bijection
for every $x \in X$ and $n \geq 0$.

\begin{prp}
  \label{prp:trivial J-fibration is I-fibration}
  A smooth map $p \colon X \to Y$ is an $\I$-fibration if and
  only if it is a $\J$-fibration and a weak homotopy equivalence.
\end{prp}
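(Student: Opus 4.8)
The plan is to prove the two implications separately, exploiting the standard relationship between the cofibration pair $(\partial I^n, I^n)$ and the fibration pair $(J^{n-1}, I^n)$, namely that lifting against $\I$ is equivalent to lifting against $\J$ \emph{plus} a condition detecting the homotopy groups. The ``only if'' direction should be the easy one; the substance is in the ``if'' direction.

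\emph{Only if.} Suppose $p$ is an $\I$-fibration. First I would check it is a $\J$-fibration: given an $\epsilon$-admissible lifting problem $f\colon J^{n-1}\to X$, $g\colon I^n\to Y$, one glues $f$ with a lift obtained along the ``bottom face'' to reduce to an $\I$-lifting problem for $\partial I^n\to I^n$ — concretely, since $J^{n-1}$ is a retract-up-to-tameness of $I^n$ (Lemma~\ref{lmm:approximate retraction exists} and Corollary~\ref{crl:admissible maps are extendable}), an $\epsilon$-admissible extension of $f$ over $I^n$ exists a priori, and one uses the $\I$-lifting property of $p$ relative to that extension to correct it to a genuine lift of $g$. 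Then I would show $p$ is a weak homotopy equivalence: surjectivity of $p_*\colon\pi_n(X,x)\to\pi_n(Y,p(x))$ comes from solving the $\I$-lifting problem with $f$ constant on $\partial I^n$ and $g$ a tame representative of a class in $\pi_n(Y,p(x))$; injectivity comes from the same lifting property applied one dimension up, to a homotopy $I^n\times I\to Y$ rel $\partial$, after reparametrising $I^n\times I\cong I^{n+1}$ and using Lemma~\ref{lmm:taming of maps} to make everything admissible. (One should also observe $p$ is surjective on path components, i.e.\ handle $\pi_0$, via the $n=0$ case $i_0\colon\emptyset\to I^0$.)

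\emph{If.} Suppose $p$ is a $\J$-fibration and a weak homotopy equivalence; I must solve an $\epsilon$-admissible lifting problem for $i_n\colon\partial I^n\to I^n$, i.e.\ given $f\colon\partial I^n\to X$ and $g\colon I^n\to Y$ with $p\circ f=g|\partial I^n$, produce an $\epsilon$-admissible $h\colon I^n\to X$ over $g$ and under $f$. The strategy is the classical obstruction argument adapted to the tame/admissible setting. Step one: using Corollary~\ref{crl:admissible maps are extendable} (applied to $\partial I^n$, viewed via $\partial I^n\cong J^{n-1}$ after deleting a top face — or more directly, extend $f$ face-by-face as in the proof of Proposition~\ref{prp:admissible replacement}), extend $f$ to some $\epsilon$-admissible $\bar f\colon I^n\to X$, not necessarily lying over $g$. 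Step two: $p\circ\bar f$ and $g$ agree on $\partial I^n$, so they together define an $\epsilon$-admissible map $\partial I^{n+1}\to Y$ (glue $p\circ\bar f$ on the bottom face, $g$ on the top face, constant-in-the-new-direction homotopy on $\partial I^n\times I$); by Lemma~\ref{lmm:another definition of homotopy groups} this represents an element of $\pi_n(Y,y_0)$, which — crucially — is the image under $p_*$ of the class of $\bar f|\partial I^n=f$ composed appropriately, hence is built from a class in the image of $p_*$. Because $p_*$ is \emph{injective}, that class is trivial, i.e.\ $p\circ\bar f\simeq g$ rel $\partial I^n$ through an $\epsilon$-admissible homotopy $H\colon I^n\times I\to Y$. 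Step three: feed $(\bar f, H)$ into the $\J$-fibration property. Precisely, $(\bar f$ on $I^n\times\{0\}) \cup (f$ constant on $\partial I^n\times I)$ is an $\epsilon$-admissible map on a copy of $J^n\subset I^{n+1}$ — note $I^n\times\{0\}\cup\partial I^n\times I$ is linearly diffeomorphic to $J^n = \partial I^n\times I\cup I^n\times\{1\}$ after relabelling — and $H$ is an $\epsilon$-admissible map $I^{n+1}\to Y$ extending $p$ of it; the $\J$-fibration property of $p$ (for the pair $J^n\to I^{n+1}$) yields an $\epsilon$-admissible lift $\tilde H\colon I^n\times I\to X$ with $\tilde H_0=\bar f$, $\tilde H|\partial I^n\times I$ constant at $f$, and $p\circ\tilde H=H$. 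Then $h:=\tilde H_1\colon I^n\to X$ is $\epsilon$-admissible, restricts to $f$ on $\partial I^n$, and satisfies $p\circ h=H_1=g$. This solves the lifting problem.

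\emph{Expected main obstacle.} The bookkeeping of tameness/admissibility constants through each gluing is the delicate part, exactly as the authors flag for $\mathbf{MC5}$. When one extends $f$ to $\bar f$ via Corollary~\ref{crl:admissible maps are extendable}, applies Lemma~\ref{lmm:taming of maps} to make the homotopy $H$ admissible rel $\partial$, and then invokes the $\J$-fibration property, the admissibility exponent $\epsilon^d$ degrades at each step ($d$ increasing with dimension), and one must verify that the various identifications $I^n\times I\cong I^{n+1}$, $\partial I^n\times I\cup I^n\times\{0\}\cong J^n$ are \emph{linear} diffeomorphisms carrying chambers to chambers so that Lemma~\ref{lmm:uniqueness criterion for tame maps} applies and the pieces actually glue smoothly. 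I expect the cleanest route is to fix one small $\sigma$ at the outset, arrange every intermediate map to be $\sigma$-admissible (shrinking as needed via Lemma~\ref{lmm:taming of maps}, which only costs homotopies rel the relevant chambers), and only at the end absorb the finitely many exponent losses — so that the final $h$ is $\epsilon$-admissible for the originally prescribed $\epsilon$, possibly after one last taming relative to $\partial I^n(\epsilon)$, which is harmless since $f$ is already $\epsilon$-tame there. The injectivity-of-$p_*$ input is used exactly once, to kill the single obstruction class; no surjectivity of $p_*$ is needed for the ``if'' direction, which is the usual asymmetry and worth stating explicitly.
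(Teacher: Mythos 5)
Your ``only if'' direction is essentially the paper's argument and is fine in outline, but the ``if'' direction has two genuine gaps, and they are exactly the two places where the weak-equivalence hypothesis has to do work. First, Step one is false as stated: $\partial I^n$ is \emph{not} identifiable with $J^{n-1}$ (one is a sphere, the other is contractible), so neither Corollary~\ref{crl:admissible maps are extendable} nor a face-by-face extension as in Proposition~\ref{prp:admissible replacement} produces an extension $\bar f\colon I^n\to X$ of an arbitrary admissible $f\colon\partial I^n\to X$. The obstruction to such a filling is precisely the class of $f$ in $\pi_{n-1}(X,x)$ (via Lemma~\ref{lmm:another definition of homotopy groups}), and the only reason it vanishes here is that $p_*[f]=[g|\partial I^n]=0$ together with \emph{injectivity} of $p_*$; this is where the paper uses the weak equivalence to get a tame null-homotopy $F$ of $f$, and it cannot be obtained ``a priori''. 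Second, Step two is a non sequitur: the glued map $\partial I^{n+1}\to Y$ built from $p\circ\bar f$ and $g$ is not in the image of $p_*$ (only half of it is $p$ of something), and even if it were, injectivity of $p_*$ would say nothing about its vanishing in $\pi_n(Y)$. To kill this difference class one needs \emph{surjectivity} of $p_*\colon\pi_n(X,x)\to\pi_n(Y,p(x))$ — either by modifying $\bar f$ in the interior by a preimage of the difference class, or, as the paper does, by replacing the map $\gamma'$ (obtained by transporting $g$ along the contraction of $f$) with $p\circ\gamma$ up to an admissible homotopy rel $\partial I^n$, and then applying the $\J$-fibration property once for $J^n\subset I^{n+1}$.

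Your closing remark that ``no surjectivity of $p_*$ is needed for the if direction'' is therefore wrong, not just unproven: a nontrivial covering such as $z\mapsto z^2$ on $S^1$ (a diffeological fiber bundle, hence a $\J$-fibration by Example~\ref{examples of fibrations}) is injective on all homotopy groups, yet it fails the lifting property against $\partial I^1\to I^1$, since a loop in the base with both endpoint lifts prescribed at the same point need not lift with those endpoints. So the correct bookkeeping of the hypotheses is: injectivity of $p_*$ in dimension $n-1$ to contract $f$ (your missing Step one input), and surjectivity in dimension $n$ to correct the resulting class in $Y$ by one coming from $X$ (your missing Step two input); the tameness/admissibility reparametrizations you worry about are indeed the routine part, handled in the paper by Lemma~\ref{lmm:taming of maps}, Proposition~\ref{prp:tame maps are extendable} and Corollary~\ref{crl:admissible maps are extendable}.
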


\begin{proof}
  Suppose $p \colon X \to Y$ is an $\I$-fibration.  We show that
  $p$ is a weak equivalence, that is, the induced map
  $p_{\ast}\colon \pi_n(X,x) \to \pi_n(Y,p(x))$ is bijective for
  every $n \geq 0$ and $x \in X$.  Let
  $\gamma \colon (I^n, \bI^n) \to (Y,p(x))$ be a tame map,
  and let $c_x \colon \bI^n \to X$ be the constant map with value
  $x \in X$.  Then we have a commutative square
  \[
    \xymatrix@C=32pt{%
      \bI^n \ar[r]^-{c_x} \ar[d]^-{i_n} & X \ar[d]^-{p}
      \\
      I^n \ar[r]^-{\gamma} & Y.
    }%
  \]
  Since $c_x$ and $\gamma$ are $\epsilon$-admissible for some
  $\epsilon > 0$, there is a lift
  $\tilde{\gamma} \colon I^n \to X$ satisfying
  $\tilde{\gamma} \circ i_n = c_x$ and
  $p \circ \tilde{\gamma} = \gamma$.  Thus we have
  $p_{\ast}([ \tilde{\gamma}]) = [\gamma]$, implying that
  $p_{\ast}$ is a surjection.
  To see that $p_*$ is injective, let $\gamma_0$ and $\gamma_1$ be
  tame maps $(I^n,\bI^n) \to (X,x)$ such that
  $p_{\ast}([\gamma_0]) = p_{\ast}([\gamma_1])$ holds in
  $\pi_n(Y,p(x))$.
  Then there exists a tame homotopy $H \colon I^n \times I \to Y$
  between $p \circ \gamma_0$ and $p \circ \gamma_1$ relative to
  $\bI^n$.
  Let $\gamma \colon \bI^{n+1} \to X$ be a tame map which takes
  $(t,s)$ to $\gamma_s(t)$ if
  $(t,s) \in I^n \times \{0,1\}$, and to $x$ if
  $(t,s) \in \bI^n \times I$.  Then we have a commutative square
  \[
    \xymatrix{%
      \bI^{n+1} \ar[r]^-{\gamma} \ar[d]^-{i_{n+1}}& X \ar[d]^-{p}
      \\
      I^n \times I \ar[r]^-{H} & Y.
    }%
  \]
  Hence there exists a lift $\tilde{H} \colon I^n \times I \to X$
  which gives a homotopy $\gamma_0 \simeq \gamma_1$.  Thus we have
  $[\gamma_0] = [\gamma_1]$, showing that $p_{\ast}$ is injective.

  We now show that $p$ is a $\J$-fibration.  Let
  $0 < \epsilon \leq 1/2$ and take $\epsilon$-admissible maps
  $f \colon J^{n-1} \to X$ and $g \colon I^n \to Y$ satisfying
  $p \circ f = g \circ i_n$.  Then we have a commutative square
  \[
    \xymatrix@C=60pt{%
      \bI^{n-1} \times \{0\} \ar[d]^{i_{n-1}} \ar[r]^-{f|\bI^{n-1}
        \times \{0\}} & X \ar[d]^p
      \\
      I^{n-1} \times \{0\} \ar[r]^-{g|I^{n-1}\times \{0\}} & Y.  }%
  \]
  Since $f|\bI^{n-1} \times \{0\}$ and $g|I^{n-1}\times \{0\}$ are $\epsilon$-admissible,
  there is an $\epsilon$-admissible lift $\tilde{g} \colon I^{n-1} \times \{0\} \to X$, and consequently we
  can define $\tilde{f} \colon \bI^n \to X$ to be the union
  $f \cup \tilde{g} \colon J^{n-1} \cup I^{n-1} \times \{0\} \to X$.
  Clearly, $\tilde{f}$ is $\epsilon$-admissible, and hence there
  exists an $\epsilon$-admissible lift $G \colon I^n \to X$ satisfying
  $p \circ G = g$ and $G \circ i_n = \tilde{f}$.  But this means
  $G|J^{n-1} = f$, implying that $p$ is a $\J$-fibration.

  Conversely, suppose $p \colon X \to Y$ is a $\J$-fibration and
  a weak homotopy equivalence.  Let $f \colon \bI^n \to X$ and
  $g \colon I^n \to Y$ be $\epsilon$-admissible maps satisfying
  $p \circ f=g \circ i_{n}$.  We need to show that there is an
  $\epsilon$-admissible lift $G \colon I^n \to X$ satisfying
  $p \circ G = g$ and $G \circ i_n = f$.
  %
  Let $e = (1,\cdots,1) \in \partial I^n$ and $x = f(e) \in X$. 
  Since
  $p \circ f = g|\bI^n$ is null homotopic and $p$ is a weak
  equivalence, there exists,
  by Lemma~\ref{lmm:another definition of homotopy groups},
  a tame homotopy
  $F \colon \bI^n \times I \to X$ from $f$ to the constant map.
  %
  %
  Let us define $H \colon \partial I^{n} \times I \cup I^{n} \times \{0\} \to Y$ by
  \[
  H(t,s) =
  \begin{cases}
    g(t), & (t,s) \in I^n \times \{0\}
    \\
    p(F(t,s)), & (t,s) \in \bI^n \times I.
  \end{cases}
  \]
  Since $H$ is $\epsilon$-admissible,
  it can be extended by
  Corollary \ref{crl:admissible maps are extendable}
  to an $\epsilon$-admissible homotopy $H' \colon I^n \times I \to Y$ from $g$ to
  $\gamma' \colon (I^n,\bI^n) \to (Y,p(x))$.
  But as $p$ is a weak equivalence, there exist a tame map
  $\gamma \colon (I^n, \bI^n) \to (X,x)$ and an $\epsilon$-admissible homotopy
  $H'' \colon I^n \times I \to Y$ from $\gamma^{\prime}$ to $p \circ \gamma$ relative to $\bI^{n}$.
  %
  %
  By tameness,
  we can define smooth maps $F^{\prime} \colon J^n \to X$ and
  $G^{\prime} \colon I^n \times I \to Y$ by the formula,
  \begin{align*}
    F^{\prime}(t,s)
    &=
      \begin{cases}
        F(t, 2s), & (t,s) \in\bI^n \times [0,1/2]
        \\
        x, & (t,s) \in \bI^n \times [1/2,1]
        \\
        \gamma(t), & (t,s) \in I^n \times \{1\},
      \end{cases}
    \\                 
    G'(t,s)
    &=
      \begin{cases}
        H'(t,2s)), & 0 \leq s \leq 1/2
        \\
        H''(t, 2s-1), & 1/2 \leq s \leq 1.
      \end{cases}
  \end{align*}
   Let $F^{\prime \prime} \colon J^{n} \to X$ and $G^{\prime \prime} \colon I^{n} \times I \to Y$ be $\epsilon$-admissible maps defined by $F^{\prime \prime}(t,s)=F^{\prime}(t, \lambda_{\epsilon}(s))$
   and $G^{\prime \prime}(t,s)=G^{\prime}(t,\lambda_{\epsilon}(s))$.
Then there exists an $\epsilon$-admissible lift $\tilde{G} \colon I^{n} \times I \to X$ satisfying $\tilde{G}|J^{n}=F^{\prime \prime}$ and 
$p \circ \tilde{G}=G^{\prime \prime}$,
since $p \circ F^{\prime \prime}=G^{\prime \prime}|J^{n}$ and $p$ is a $\J$-fibration.
Hence we have an $\epsilon$-admissible lift $G=\tilde{G}|I^{n} \times \{0\} \colon I^{n} \to X$ satisfying $G \circ i_{n}=f$ and $p \circ G=g$.
  %
\end{proof}
%
%
%
%
%
%
%
%
%
%
%
\section{Model category of diffeological
  spaces}\label{sec:Quillen model category}
In this section we shall show that the category $\Diff$ has a
model structure by arguing as in the proof of \cite[Proposition
8.3]{Spa}.

\begin{dfn}\label{dfn:cofibrations}
  Let $\K$ be either $\I$ or $\J$.  A smooth map
  $i \colon X \to Y$ is called a \emph{$\K$-cofibration} if it
  has the left lifting property with respect to $\K$-fibrations,
  that is, for every commutative square
  \begin{equation*}
    \label{eqn:commutative square}
    \vcenter{%
      \xymatrix@C=32pt{%
        X \ar[d]^i \ar[r] & E \ar[d]^p
        \\
        Y \ar@{.>}[ur] \ar[r] & B }}%
  \end{equation*}
  such that $p \colon E \to B$ is a $\K$-fibration, there exists
  a lift $Y \to E$ making the two triangles commutative.
\end{dfn}

\begin{thm}\label{thm:model structure}
  The category $\Diff$ has a structure of a model category by
  defining a smooth map $h \colon X \to Y$ to be
  \begin{enumerate}
  \item a weak equivalence if $h$ is a weak homotopy equivalence,
  \item a fibration if $h$ is a $\J$-fibration, and
  \item a cofibration if $h$ is an $\I$-cofibration.
  \end{enumerate}
\end{thm}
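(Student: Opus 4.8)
The plan is to verify the Dwyer--Spalinski axioms \textbf{MC1}--\textbf{MC5} for the three specified classes. The axioms \textbf{MC1} (completeness and cocompleteness), \textbf{MC2} (two-out-of-three for weak equivalences), and \textbf{MC3} (closure under retracts) are either immediate from Theorem~\ref{thm:convenien category} or follow by formal nonsense: two-out-of-three is clear since weak homotopy equivalences are defined by an isomorphism condition on $\pi_n$, and closure under retracts holds for $\I$-cofibrations and $\J$-fibrations because classes defined by lifting properties are automatically retract-closed, while weak equivalences are retract-closed by the usual diagram chase on $\pi_n$. For \textbf{MC4}, I would first observe that, by Definition~\ref{dfn:cofibrations}, every $\I$-cofibration has the left lifting property with respect to $\I$-fibrations; by Proposition~\ref{prp:trivial J-fibration is I-fibration}, $\I$-fibrations are exactly the maps that are simultaneously $\J$-fibrations and weak equivalences, i.e.\ the trivial fibrations. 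This gives one half of \textbf{MC4} tautologically. The other half---that every trivial cofibration (an $\I$-cofibration that is also a weak equivalence) has the left lifting property against all $\J$-fibrations---requires knowing that trivial cofibrations coincide with $\J$-cofibrations; this is the standard retract argument using the factorization \textbf{MC5}(ii): factor a trivial cofibration $i$ as $p\circ j$ with $j$ a $\J$-cofibration and $p$ a $\J$-fibration, note $p$ is then a weak equivalence by two-out-of-three, so $p$ is a trivial fibration, hence $i$ has the lifting property against $p$, so $i$ is a retract of $j$ and therefore itself a $\J$-cofibration.

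The substance of the proof is \textbf{MC5}, the two factorizations. For each, I would use Quillen's small object argument relative to the set $\I$ (respectively $\J$) of inclusions $\partial I^n\to I^n$ (respectively $J^{n-1}\to I^n$). Given $f\colon X\to Y$, one builds a factorization $X\xrightarrow{i} Z\xrightarrow{p} Y$ by iteratively attaching cones: at each stage, form the pushout of all commutative squares from members of $\I$ (resp.\ $\J$) to the current map over $Y$, and pass to the colimit. Because diffeological spaces are $\kappa$-presentable---the domains $\partial I^n$, $J^{n-1}$ are "small" relative to the relevant transfinite compositions, since a plot of a sequential colimit factors through a finite stage---the argument terminates after countably many steps. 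The map $i$ is then a (transfinite composite of pushouts of) $\I$-cells, hence an $\I$-cofibration (resp.\ $\J$-cofibration), and $p$ has the right lifting property against $\I$ (resp.\ $\J$), hence is an $\I$-fibration $=$ trivial fibration (resp.\ a $\J$-fibration). The only remaining point for factorization (ii) is that the $\J$-cofibration $i$ is a weak equivalence; this follows because each elementary $\J$-cell $J^{n-1}\to I^n$ is an inclusion of a deformation retract (indeed $J^{n-1}\hookrightarrow I^n$ admits a smooth deformation retraction onto $J^{n-1}(\epsilon)$... but note $J^{n-1}$ is \emph{not} a smooth retract of $I^n$), so one instead argues via the long exact sequence of Proposition~\ref{prp:homotopy exact sequence} and the approximate-retraction machinery of Section~3 to see that attaching a $\J$-cell does not change smooth homotopy groups.

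The main obstacle---flagged by the authors themselves---is precisely that $J^{n-1}$ is \emph{not} a smooth retract of $I^n$, so the elementary $\J$-cofibrations are not literal deformation retract inclusions, and the naive small object argument does not obviously produce a weak equivalence in factorization (ii). The fix, to be carried out in Section~6, is to replace the standard gluing at each stage by a construction that glues in the approximate retractions $R_\epsilon\colon I^n\to J^{n-1}$ of Lemma~\ref{lmm:approximate retraction exists} and the tame-extension results (Proposition~\ref{prp:tame maps are extendable}, Corollary~\ref{crl:admissible maps are extendable}), controlling the tameness parameter $\epsilon$ as one passes up the tower so that the infinite composite remains well-behaved. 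Establishing that this modified infinite gluing still yields a $\J$-cofibration that is a weak homotopy equivalence, with $p$ a genuine $\J$-fibration, is the technical heart of the paper and is deferred to Section~6; here I would simply state \textbf{MC5} as reduced to those two factorization lemmas and assemble \textbf{MC1}--\textbf{MC5} into the conclusion.
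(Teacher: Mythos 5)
Your proposal is correct and follows essentially the same route as the paper: \textbf{MC1}--\textbf{MC3} directly, \textbf{MC4}(i) tautologically from Proposition~\ref{prp:trivial J-fibration is I-fibration}, \textbf{MC4}(ii) by the retract/lifting argument using the $\J$-factorization, and \textbf{MC5} reduced to the factorization theorem proved by the (modified, tameness-controlled) infinite gluing construction of Section~6, exactly as in Theorem~\ref{thm:factorization of a smooth map}.
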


Observe that every diffeological space $X$ is {\it fibrant} in sense that the constant map $X \to \ast$ is a $\J$-fibration 
(cf.~Example \ref{examples of fibrations}).

We prove Theorem~\ref{thm:model structure} by verifying the
axioms below (cf.~\cite{Spa}).  A fibration or a cofibration
is called to be \emph{trivial} if it is a weak homotopy
equivalence.

\begin{itemize}
\item[\bf MC1] Finite limits and colimits exist.
\item[\bf MC2] If $f$ and $g$ are maps such that $g \circ f$ is
  defined and if two of the three maps $f$, $g$, $g \circ f$ are weak
  equivalences, then so is the third.
\item[\bf MC3] If $f$ is a retract of $g$ and $g$ is a fibration,
  cofibration, or a weak equivalence, then so is $f$.
\item[\bf MC4] Given a commutative square of the form
  \begin{equation*}
    \vcenter{%
      \xymatrix@C=32pt{%
        A \ar[r] \ar[d]^-{i} & X \ar[d]^-{p}
        \\
        B \ar[r] \ar@{.>}[ru] & Y }}%
  \end{equation*}
  the dotted arrow exists so as to make the two triangles commutative
  if either (i) $i$ is a cofibration and $p$ is a trivial fibration,
  or (ii) $i$ is a trivial cofibration and $p$ is a fibration.
\item[\bf MC5] Any map $f$ can be factored in two ways: (i)
  $f = p \circ i$, where $i$ is a cofibration and $p$ is a trivial
  fibration, and (ii) $f = p \circ i$, where $i$ is a trivial
  cofibration and $p$ is a fibration.
\end{itemize}

Axiom \textbf{MC1} follows from the fact that $\Diff$ has small
limits and colimits, 
and \textbf{MC2} follows from the functoriality of induced maps
combined with the change of basepoint homomorphism
(Proposition~\ref{prp:invariance under basepoint change}).  Axiom
\textbf{MC3} is straightforward from the definitions (cf.\
\cite[8.10]{Spa}).  In order to verify \textbf{MC4} and
\textbf{MC5}, we need several lemmas and propositions.

By Proposition~\ref{prp:trivial J-fibration is I-fibration}, all
cofibrations have the left lifting property with respect to
trivial $\J$-fibrations.  Hence we have the following.

\begin{crl}
  \label{crl:MC4 (ii)}
  Axiom\/ {\bf MC4} holds under the condition {\rm (i).}
\end{crl}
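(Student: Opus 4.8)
The plan is to deduce the statement formally from Proposition~\ref{prp:trivial J-fibration is I-fibration} together with the definitions of the three classes of maps fixed in Theorem~\ref{thm:model structure}. First I would unpack what \textbf{MC4}~(i) asks: given a commutative square whose left vertical map $i$ is a cofibration and whose right vertical map $p$ is a trivial fibration, we must produce a diagonal filler making both triangles commute. By Theorem~\ref{thm:model structure} a cofibration is by definition an $\I$-cofibration, and a trivial fibration is a $\J$-fibration that is simultaneously a weak homotopy equivalence.

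The key step is to observe that Proposition~\ref{prp:trivial J-fibration is I-fibration} identifies this last class precisely: a $\J$-fibration which is a weak homotopy equivalence is exactly an $\I$-fibration. Hence the map $p$ appearing in our square is an $\I$-fibration. Now Definition~\ref{dfn:cofibrations} says precisely that an $\I$-cofibration has the left lifting property with respect to every $\I$-fibration; applying this to the pair $(i,p)$ yields the desired dotted arrow. This is essentially the content of the remark preceding the corollary, namely that all cofibrations lift against trivial $\J$-fibrations.

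I do not expect any genuine obstacle: the real work has already been carried out in Proposition~\ref{prp:trivial J-fibration is I-fibration}, and the corollary is a purely formal consequence of matching up definitions. The only point requiring a moment's care is the bookkeeping that ``trivial fibration'' in the model-categorical sense of Theorem~\ref{thm:model structure} unwinds to ``$\J$-fibration and weak homotopy equivalence'', which is exactly the hypothesis of Proposition~\ref{prp:trivial J-fibration is I-fibration}, so that the identification of classes can be invoked without friction.
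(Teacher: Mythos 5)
Your argument is correct and coincides with the paper's own: the paper derives this corollary exactly as you do, by invoking Proposition~\ref{prp:trivial J-fibration is I-fibration} to identify trivial fibrations with $\I$-fibrations and then applying the defining left lifting property of $\I$-cofibrations from Definition~\ref{dfn:cofibrations}.
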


The rest of the axioms follow from the theorem below, whose proof
is deferred until the next section.

\begin{thm}
  \label{thm:factorization of a smooth map}
  Let $\K$ be either $\I$ or $\J$.  Then any smooth map
  $f \colon X \to Y$ can be factorized as a composition
  \[
    X \xrightarrow{i_{\infty}} G^{\infty}(\K,f)
    \xrightarrow{p_{\infty}} Y
  \]
  such that $i_{\infty}$ is a $\K$-cofibration and $p_{\infty}$ is
  a $\K$-fibration.
  Moreover, $i_{\infty}$ can be taken as a trivial cofibration
  when $\K = \J$.
\end{thm}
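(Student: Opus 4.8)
The plan is to carry out a transfinite small-object-type argument adapted to the cubical setting, building $G^{\infty}(\K,f)$ as a sequential colimit $\colim_k G^k(\K,f)$ of pushouts that attach cells along the generating maps in $\K$. At each stage $k$, one forms the set $S_k$ of all commutative squares
\[
  \vcenter{\xymatrix{
    K^{n-1} \ar[d] \ar[r] & G^k(\K,f) \ar[d] \\
    I^n \ar[r] & Y}}
\]
with $K^{n-1} \to I^n$ in $\K$ and with the top map $\epsilon$-admissible for some $\epsilon$; then one defines $G^{k+1}(\K,f)$ by the pushout of $\coprod_{S_k} K^{n-1} \to \coprod_{S_k} I^n$ along the obvious map to $G^k(\K,f)$, with the induced map $p_{k+1}\colon G^{k+1}(\K,f)\to Y$. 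Setting $G^0(\K,f)=X$ and $i_{\infty}$ the canonical inclusion into the colimit, one gets the desired factorization $f = p_{\infty}\circ i_{\infty}$, where $p_{\infty}$ is induced by the $p_k$'s. That $p_{\infty}$ is a $\K$-fibration is the lifting-problem argument: any $\epsilon$-admissible square over $p_{\infty}$ has compact source $K^{n-1}$, so its top map lands in some finite stage $G^k(\K,f)$ — here one needs that the colimit is well-behaved on the relevant plots, i.e.\ that maps out of $K^{n-1}$ factor through a finite stage — and then the square is one of the squares in $S_k$, so a lift into $G^{k+1}(\K,f)\subset G^{\infty}(\K,f)$ exists by construction. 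Admissibility of the lift is automatic from the cell-attachment. That $i_{\infty}$ is a $\K$-cofibration is formal: each $G^k\to G^{k+1}$ is a pushout of a coproduct of maps in $\K$, hence has the left lifting property against all $\K$-fibrations, and this property is closed under composition and transfinite composition, so the colimit inclusion does too.

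For the last clause, when $\K = \J$ one must further show $i_{\infty}$ is a weak homotopy equivalence. The key point is that each generating inclusion $j_n\colon J^{n-1}\to I^n$ is a "trivial cofibration" in the strong geometric sense that $J^{n-1}$ is a deformation retract of $I^n$ — indeed $I^{n-1}\times\{1\}$ is a deformation retract of $J^{n-1}$ and both are contractible (this is the mechanism already used in the discussion preceding Lemma~\ref{lmm:another definition of homotopy groups}). Consequently each pushout $G^k(\J,f)\to G^{k+1}(\J,f)$ is, up to the tameness bookkeeping, a relative deformation retract, hence induces isomorphisms on all $\pi_n(-,x_0)$ by the homotopy long exact sequence (Proposition~\ref{prp:homotopy exact sequence}) together with the change-of-basepoint isomorphisms (Proposition~\ref{prp:invariance under basepoint change}). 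Passing to the colimit, since any tame map from $I^n$ or $\bI^n$ into $G^{\infty}(\J,f)$ factors through a finite stage, $i_{\infty,*}\colon \pi_n(X,x_0)\to\pi_n(G^{\infty}(\J,f),x_0)$ is an isomorphism for every $n$ and every basepoint; thus $i_{\infty}$ is a weak homotopy equivalence, i.e.\ a trivial $\J$-cofibration.

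The main obstacle — and the reason the authors defer this to its own section — is the absence of a smooth retraction $I^n\to J^{n-1}$, which makes the "compact source factors through a finite stage" step delicate: a smooth map out of $J^{n-1}$ (or $I^n$) into the colimit need not factor through any $G^k$ on the nose, only after a tame homotopy, so the gluing must be organized so that the requisite tameness/admissibility is preserved at every stage and in the limit. This is precisely the point where the gap in the original manuscript's Lemma 5.6 occurred, and the fix is the modified infinite gluing construction: one arranges the $\epsilon$'s at successive stages to shrink in a controlled way (e.g.\ $\epsilon_{k+1}=\epsilon_k^{\,?}$ or a fixed decreasing sequence) and uses Lemma~\ref{lmm:taming of maps}, Proposition~\ref{prp:tame maps are extendable}, and Proposition~\ref{prp:admissible replacement} to replace any given map by an admissible one landing in a finite stage before solving the lifting problem. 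Making this bookkeeping precise, so that both $p_{\infty}$ is genuinely a $\K$-fibration and $i_{\infty}$ genuinely has the lifting property, is the technical heart of Section~6.
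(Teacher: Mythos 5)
Your outline reproduces the general shape of the paper's argument (iterated cell attachment, compactness of $K^{n-1}$ to factor maps into the colimit through a finite stage via a plot argument, and a deformation-retract argument for the triviality of $i_\infty$ when $\K=\J$), but it omits the one idea that makes the construction work in $\Diff$, and the steps you declare ``formal'' or ``automatic'' are exactly the ones that fail for the naive attachment. The paper does \emph{not} push out along the plain inclusions $K^{n-1}\to I^n$: it extends each $\epsilon$-admissible attaching map $\phi$ to a map $\phi_\epsilon$ defined on an enlarged subcomplex ($\partial I^n$ or $J^{n-1}_{\epsilon^{n-1}}=\partial I^n\setminus(\epsilon^{n-1},1-\epsilon^{n-1})^{n-1}\times\{0\}$) and glues on the \emph{modified} cube $\tilde I^n_{\epsilon^n}$, the cube carrying the diffeology generated by the smashing map $T^n_{\epsilon^n,\epsilon^{n-1}}$, via the subduction $t^n_\epsilon$. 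This matters because fibrations in this paper are only required to lift \emph{admissible} squares. With plain cells, the characteristic map $I^n\to G^{k+1}$ is not tame, so your claim that ``admissibility of the lift is automatic from the cell-attachment'' is false, and the solution of a lifting problem against a $\K$-fibration restricted to an attached cell involves a bottom map $I^n\to B$ that need not be admissible, so the pushout of a map in $\K$ does \emph{not} formally acquire the left lifting property against $\K$-fibrations. In the paper both problems are cured at once: precomposition with $t^n_\epsilon$ makes every relevant map $\epsilon$-admissible, Lemma~\ref{lmm:property of tame} lets tame lifts descend to $\tilde I^n_{\epsilon^n}$, and Lemma~\ref{lmm:equivalent of fibrations} (lifting against $J^{n-1}_{\epsilon^{n-1}}\to I^n$) supplies the extension $\phi_\epsilon$ needed to attach along the enlarged complex. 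None of this machinery appears in your proposal.

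The same gap affects the triviality of $i_\infty$ for $\K=\J$. Your justification rests on the assertion that $J^{n-1}$ is a deformation retract of $I^n$; as the paper stresses repeatedly (and as you yourself note later), there is no smooth retraction $I^n\to J^{n-1}$, and contractibility of both spaces does not make a pushout along $J^{n-1}\to I^n$ into a weak equivalence. The paper's Proposition~\ref{prp:deformation retract} proves that $X$ is a deformation retract of $X\cup_{\phi_\epsilon}\tilde I^n_{\epsilon^n}$ precisely because the cell being attached is the smashed cube glued along the enlarged $J^{n-1}_{\epsilon^{n-1}}$: an $\epsilon^{n-1}$-approximate retraction composed with the smashing maps becomes an honest smooth retracting homotopy after passing to $\tilde I^n_{\epsilon^n}$, and tameness of $\phi_\epsilon$ (via Lemma~\ref{lmm:uniqueness criterion for tame maps}) makes the homotopy compatible with the gluing. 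Your phrase ``up to the tameness bookkeeping'' and the closing paragraph acknowledge that the real difficulty lies here, but deferring ``the technical heart'' is exactly omitting the proof: the modified gluing construction with $\tilde I^n_{\epsilon^n}$, $J^{n-1}_{\epsilon^{n-1}}$, and the attendant Lemmas~\ref{lmm:property of tame}, \ref{lmm:equivalent of fibrations} and Propositions~\ref{prp:cofibrations}, \ref{prp:deformation retract} is the content of Theorem~\ref{thm:factorization of a smooth map}, and it is missing from the proposal.
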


Since $\I$-fibration is a trivial fibration by
Proposition~\ref{prp:trivial J-fibration is I-fibration}, the
factorization
\[
  X \xrightarrow{i_{\infty}} G^{\infty}(\I,f)
  \xrightarrow{p_{\infty}} Y
\]
implies \textbf{MC5} (1), while on the other hand,
\[
  X \xrightarrow{i_{\infty}} G^{\infty}(\J,f)
  \xrightarrow{p_{\infty}} Y
\]
implies \textbf{MC5} (2).  Finally, we prove \textbf{MC4}~(ii),
that is,

\begin{prp}
  Every trivial cofibration has the left lifting property with respect
  to fibrations.
\end{prp}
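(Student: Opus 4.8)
The plan is to deduce this from the (deferred) factorization Theorem~\ref{thm:factorization of a smooth map} by the standard retract argument. Let $i \colon A \to B$ be a trivial cofibration, so by definition $i$ is an $\I$-cofibration and a weak homotopy equivalence. First I would invoke Theorem~\ref{thm:factorization of a smooth map} with $\K = \J$ to factor
\[
  A \xrightarrow{\,i_{\infty}\,} G^{\infty}(\J,i) \xrightarrow{\,p_{\infty}\,} B,
\]
where, using the ``Moreover'' clause, $i_{\infty}$ is a trivial $\J$-cofibration and $p_{\infty}$ is a $\J$-fibration.

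Next, since $i$ and $i_{\infty}$ are weak equivalences and $i = p_{\infty} \circ i_{\infty}$, axiom \textbf{MC2} shows that $p_{\infty}$ is a weak equivalence, hence a trivial $\J$-fibration, and so an $\I$-fibration by Proposition~\ref{prp:trivial J-fibration is I-fibration}. As $i$ is an $\I$-cofibration, the commutative square
\[
  \xymatrix@C=40pt{
    A \ar[r]^-{i_{\infty}} \ar[d]_-{i} & G^{\infty}(\J,i) \ar[d]^-{p_{\infty}}
    \\
    B \ar[r]_-{\id_B} \ar@{.>}[ur]^-{r} & B
  }
\]
then admits a lift $r \colon B \to G^{\infty}(\J,i)$ with $r \circ i = i_{\infty}$ and $p_{\infty} \circ r = \id_B$. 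This $r$ exhibits $i$ as a retract of $i_{\infty}$, via
\[
  \xymatrix@C=36pt{
    A \ar[r]^-{\id_A} \ar[d]_-{i} & A \ar[r]^-{\id_A} \ar[d]_-{i_{\infty}} & A \ar[d]^-{i}
    \\
    B \ar[r]_-{r} & G^{\infty}(\J,i) \ar[r]_-{p_{\infty}} & B,
  }
\]
in which both squares commute and both horizontal composites are identities.

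To finish, I would observe that $i_{\infty}$, being a $\J$-cofibration, has the left lifting property with respect to every $\J$-fibration, and that this property is stable under retracts (the same formal argument as for \textbf{MC3}; cf.\ \cite[8.10]{Spa}); hence $i$ has the left lifting property with respect to every $\J$-fibration, i.e.\ with respect to every fibration. The one point that genuinely carries weight is the input from Theorem~\ref{thm:factorization of a smooth map}: one needs that the $\K = \J$ factorization really does produce an $i_{\infty}$ that is at once a $\J$-cofibration \emph{and} a weak equivalence, since both the \textbf{MC2} step and the retract step rely on it. Granting that theorem, the present proposition is purely formal and I expect no further obstacle.
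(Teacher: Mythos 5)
Your proposal is correct and follows essentially the same route as the paper: factor $i$ via Theorem~\ref{thm:factorization of a smooth map} with $\K=\J$, use \textbf{MC2} and Proposition~\ref{prp:trivial J-fibration is I-fibration} to see $p_{\infty}$ is a trivial fibration, lift against it using that $i$ is a cofibration, and conclude from the fact that $i_{\infty}$ is a $\J$-cofibration. The only cosmetic difference is that the paper composes the two lifts explicitly ($g'\circ h$) rather than phrasing the last step as ``$i$ is a retract of $i_{\infty}$ and the left lifting property is retract-stable,'' which is the same formal argument.
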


\begin{proof}
  Suppose $i \colon X \to Y$ is a trivial cofibration and
  $p \colon A \to B$ a fibration.  Let $f \colon X \to A$ and
  $g \colon Y \to B$ be smooth maps such that
  $p \circ f = g \circ i$ holds.
  Let us take the factorization
  $i = p_{\infty} \circ i_{\infty} \colon X \to G^{\infty}(\J,f)
  \to Y$, where $i_{\infty}$ is a trivial cofibration and
  $p_{\infty}$ is a fibration.  Because $i$ and $i_{\infty}$ are
  weak equivalences, $p_{\infty}$ is a weak equivalence, and
  hence a trivial fibration.

  Now, consider the commutative square
  \[
    \xymatrix{%
      X \ar[d]^-{i} \ar[r]^-{i_{\infty}} & G^{\infty}(\J,i)
      \ar[d]^-{p_{\infty}}
      \\
      Y \ar@{=}[r] \ar@{.>}[ru] & Y.
    }%
  \]
  As $i$ is a cofibration, there exists by MC4~(i) a lift
  $h \colon Y \to G^{\infty}(\J,i)$ such that
  $p_{\infty} \circ h =1$ and $ h\circ i = i_{\infty}$.
  Hence we obtain a commutative diagram
  \[
    \xymatrix{%
      X \ar[d]^-{i} \ar@{=}[r] & X \ar@{=}[r] \ar[d]^-{i_{\infty}} & X
      \ar[d]^-{i} \ar[r]^-{f} & A \ar[d]^-{p}
      \\
      Y \ar[r]^-{h} & G^{\infty}(\J,i) \ar[r]^-{p_{\infty}}
      \ar@{.>}[rru] & Y \ar[r]^-{g} & B.
    }%
  \]
  As $i_{\infty}$ is a $\J$-cofibration, there exists a lift
  $g^{\prime} \colon G^{\infty}(\J,i) \to A$ making the diagram
  commutative.  Thus we obtain a desired lift
  $g' \circ h \colon Y \to A$.
\end{proof}

This completes the proof of Theorem \ref{thm:model structure}.

\section{Infinite gluing construction}
We prove Theorem~\ref{thm:factorization of a smooth map} by applying infinite gluing construction to define a factorization
$X \to G^{\infty}(\K,f) \to Y$ for $\K=\I$ and $\J$.

For $0<\epsilon < \tau \leq 1/2$,
let $\tilde{I}^{n}_{\epsilon, \tau}$ be the $n$-cube equipped with the diffeology generated by the smooth map 
$T^{n}_{\epsilon, \tau} \colon \mathbf{R}^{n} \to I^{n}$ (cf.~Lemma \ref{lmm:modified smash function}).
By the definition,
$T^{n}_{\epsilon, \tau}$ restricts to a subduction $I^{n} \to \tilde{I}^{n}_{\epsilon, \tau}$.
\begin{lmm}\label{lmm:property of tame}
For any $\epsilon$-tame map $f \colon I^{n} \to X$,
there exists a smooth map $\tilde{f} \colon \tilde{I}^{n}_{\epsilon, \tau} \to X$ satisfying $f=\tilde{f} \circ T^{n}_{\epsilon,\tau}$.
\end{lmm}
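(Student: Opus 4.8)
The plan is to produce $\tilde f$ first as a map of underlying sets and then to deduce its smoothness formally from the fact that $T^n_{\epsilon,\tau}$ restricts to a subduction $I^n\to\tilde{I}^n_{\epsilon,\tau}$. The heart of the matter is therefore the set-theoretic factorization: I must show that $T^n_{\epsilon,\tau}(P)=T^n_{\epsilon,\tau}(Q)$ forces $f(P)=f(Q)$ for $P,Q\in I^n$, and this is exactly where $\epsilon$-tameness enters. The relevant input is the fibre structure of $T_{\epsilon,\tau}$: by construction (Lemma~\ref{lmm:modified smash function}) it collapses each $\epsilon$-collar $[0,\epsilon]$ and $[1-\epsilon,1]$ to the corresponding endpoint and is strictly increasing on $[\epsilon,1-\epsilon]$, so that $T_{\epsilon,\tau}^{-1}(0)\cap I=[0,\epsilon]$, $T_{\epsilon,\tau}^{-1}(1)\cap I=[1-\epsilon,1]$, and every other fibre over $I$ is a single point of $(\epsilon,1-\epsilon)$.

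Reading this coordinatewise, $T^n_{\epsilon,\tau}(P)=T^n_{\epsilon,\tau}(Q)$ means that for each $j$ either $p_j=q_j$, or $p_j,q_j\in[0,\epsilon]$, or $p_j,q_j\in[1-\epsilon,1]$. For each index $j$ of the latter two kinds there is $\alpha_j\in\{0,1\}$ with $|p_j-\alpha_j|\le\epsilon$ and $|q_j-\alpha_j|\le\epsilon$, so Definition~\ref{dfn:definition of tame maps} allows me to replace $p_j$ by $\alpha_j$ (and likewise $q_j$ by $\alpha_j$) without changing the value of $f$. Applying the corresponding face projections $\pi_j^{\alpha_j}$ one index at a time — each alters only its own coordinate, so the tameness hypothesis survives for the indices not yet treated — I transform both $P$ and $Q$ into the \emph{same} point of $I^n$, whence $f(P)=f(Q)$. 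Therefore $f$ factors uniquely through the surjection $T^n_{\epsilon,\tau}|_{I^n}$ by a set map $\tilde f\colon\tilde{I}^n_{\epsilon,\tau}\to X$ with $\tilde f\circ T^n_{\epsilon,\tau}=f$, and since $T^n_{\epsilon,\tau}$ restricts to a subduction, a map out of $\tilde{I}^n_{\epsilon,\tau}$ is smooth as soon as its composite with this subduction is smooth; as $\tilde f\circ T^n_{\epsilon,\tau}=f$ is smooth by hypothesis, $\tilde f$ is smooth.

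I expect the only real obstacle to be the bookkeeping just described: making the fibre description of $T_{\epsilon,\tau}$ precise (notably its strict monotonicity on $[\epsilon,1-\epsilon]$) and checking that the successive projections $\pi_j^{\alpha_j}$ may be applied in an arbitrary order. Everything after the set-theoretic factorization is purely formal; alternatively, one could avoid the subduction statement and verify smoothness of $\tilde f$ directly from the local description of the generated diffeology of $\tilde{I}^n_{\epsilon,\tau}$, every plot of which is locally constant or locally of the form $T^n_{\epsilon,\tau}\circ\varphi$ with $\varphi$ smooth.
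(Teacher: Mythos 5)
Your proposal is correct and follows essentially the same route as the paper: the paper also factors $f$ set-theoretically through $T^n_{\epsilon,\tau}$ (phrased via the bijection $[\epsilon,1-\epsilon]^n \to \tilde{I}^n_{\epsilon,\tau}$, with $\epsilon$-tameness guaranteeing $f=\tilde f\circ T^n_{\epsilon,\tau}$, i.e.\ constancy of $f$ on the fibres) and then deduces smoothness of $\tilde f$ from the fact that $T^n_{\epsilon,\tau}$ restricts to a subduction, citing \cite[1.51]{Zem}. Your explicit fibre description and the coordinatewise application of the projections $\pi_j^{\alpha_j}$ merely spell out the tameness step that the paper leaves implicit, so there is no substantive difference.
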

\begin{proof}
Since $T^{n}_{\epsilon, \tau}$ restricts to bijection $[\epsilon, 1- \epsilon]^{n} \to \tilde{I}^{n}_{\epsilon,\tau}$,
and since $f$ is $\epsilon$-tame,
there is a well defined map $\tilde{f}=f \circ (T^{n}_{\epsilon, \tau})^{-1} \colon \tilde{I}^{n}_{\epsilon, \tau} \to X$ which satisfies 
$f=\tilde{f} \circ T^{n}_{\epsilon, \tau}$.
But as $T^{n}_{\epsilon,\tau}$ is a subduction,
$\tilde{f}$ is smooth by \cite[1.51]{Zem}.
\end{proof}
\begin{prp}\label{prp:homotopy equivalent of cubes}
The map $T^{n}_{\epsilon,\tau} \colon I^{n} \to \tilde{I}^{n}_{\epsilon,\tau}$ gives a homotopy inverse to the inclusion 
$1_{\epsilon,\tau} \colon \tilde{I}^{n}_{\epsilon,\tau} \to I^{n}$.
\end{prp}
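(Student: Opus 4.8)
The plan is to exhibit explicit homotopies witnessing that $1_{\epsilon,\tau}$ and $T^n_{\epsilon,\tau}$ are mutually homotopy inverse. First I would observe that since $T^n_{\epsilon,\tau} \colon \R^n \to I^n$ restricts to $t \mapsto t$ on $I^n(\tau) = [\tau,1-\tau]^n$ (condition (2) of Lemma~\ref{lmm:modified smash function}), and since $1_{\epsilon,\tau}$ is the identity on underlying sets, the composite $1_{\epsilon,\tau} \circ T^n_{\epsilon,\tau} \colon I^n \to I^n$ is literally the smooth map $T^n_{\epsilon,\tau}|I^n$. So for one direction I need a smooth homotopy in $\Diff$ from $\id_{I^n}$ to $T^n_{\epsilon,\tau}|I^n$; the obvious candidate is the straight-line homotopy $H(v,s) = (1-s)\,v + s\,T^n_{\epsilon,\tau}(v)$, which is smooth $I^n \times I \to I^n$ because both endpoints land in $I^n$ and $I^n$ is convex, exactly as in the proof of Lemma~\ref{lmm:taming of maps}.

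For the other direction I need a smooth homotopy from $\id$ to $T^n_{\epsilon,\tau} \circ 1_{\epsilon,\tau}$ on $\tilde I^n_{\epsilon,\tau}$. The subtlety is that homotopies \emph{out of} $\tilde I^n_{\epsilon,\tau}$ must be smooth with respect to its coarser diffeology, so I cannot just reuse the straight-line formula verbatim — I must produce it as a map that descends along the subduction $T^n_{\epsilon,\tau} \colon I^n \to \tilde I^n_{\epsilon,\tau}$. The key point is that the straight-line homotopy $H$ above is itself $\epsilon$-tame in the first $n$ coordinates: since $T^n_{\epsilon,\tau}$ is constant ($=0$ or $=1$) on the $\epsilon$-neighborhood of each face (conditions (1),(3)) and $v$ ranges over $I^n$, one checks $H(\pi_j^\alpha v, s) = H(v,s)$ whenever $|v_j - \alpha| \le \epsilon$. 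Hence by Lemma~\ref{lmm:property of tame} (applied with the homotopy parameter as a harmless extra coordinate, or more precisely by the universal property of the subduction $T^n_{\epsilon,\tau}\times\id_I$) the map $H$ factors through a smooth map $\tilde H \colon \tilde I^n_{\epsilon,\tau} \times I \to I^n$; composing with $T^n_{\epsilon,\tau}$ one more time, or rather observing that $\tilde H$ already has the right target after another descent, yields a smooth homotopy on $\tilde I^n_{\epsilon,\tau}$ from $\id$ to $T^n_{\epsilon,\tau}\circ 1_{\epsilon,\tau}$. I would also need to check that $\tilde H$ at $s=1$ equals $T^n_{\epsilon,\tau}\circ 1_{\epsilon,\tau}$ as a self-map of $\tilde I^n_{\epsilon,\tau}$, which is immediate since on underlying sets it is $T^n_{\epsilon,\tau}$ composed with the identity inclusion.

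The main obstacle I anticipate is the bookkeeping around which diffeology each map and homotopy carries, and in particular making precise that $H$ (or a suitable variant of it) descends along the subduction in the homotopy direction — i.e.\ verifying tameness of the straight-line homotopy carefully enough to invoke Lemma~\ref{lmm:property of tame}, and confirming that the descended map indeed has $\tilde I^n_{\epsilon,\tau}$ (not merely $I^n$) as a legitimate source. Everything else is a routine convexity/smoothness check. Once both homotopies are in hand, the statement that $1_{\epsilon,\tau}$ and $T^n_{\epsilon,\tau}$ are homotopy inverses is immediate from the definition of homotopy equivalence given in Section~2.
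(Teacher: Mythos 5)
Your first homotopy (the straight-line map $H(v,s)=(1-s)v+sT^{n}_{\epsilon,\tau}(v)$ on $I^{n}$) is exactly what the paper uses. The second half, however, contains a genuine gap: the claim that $H$ is $\epsilon$-tame in the first $n$ coordinates, and hence factors through a smooth map $\tilde{H}\colon \tilde{I}^{n}_{\epsilon,\tau}\times I\to I^{n}$, is false. For $v_j\le\epsilon$ the $j$-th coordinate of $H(v,s)$ is $(1-s)v_j$, whereas that of $H(\pi_j^{0}v,s)$ is $0$; these differ whenever $0<v_j\le\epsilon$ and $s<1$. In fact no such $\tilde{H}$ can exist at all: at $s=0$ it would give a factorization of $\id_{I^n}$ through the non-injective map $T^{n}_{\epsilon,\tau}$ (which collapses $[0,\epsilon]$ and $[1-\epsilon,1]$ in each coordinate), a contradiction. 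So the step ``descend $H$ along $T^{n}_{\epsilon,\tau}\times\id_I$, then compose with $T^{n}_{\epsilon,\tau}$'' is in the wrong order, and Lemma~\ref{lmm:property of tame} cannot be invoked where you invoke it.

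The repair is to post-compose \emph{before} descending, which is precisely what the paper does. The composite $T^{n}_{\epsilon,\tau}\circ H\colon I^{n}\times I\to \tilde{I}^{n}_{\epsilon,\tau}$ \emph{is} constant on the fibres of $T^{n}_{\epsilon,\tau}\times\id_I$: the only non-singleton fibres of $T_{\epsilon,\tau}$ are $[0,\epsilon]$ (over $0$) and $[1-\epsilon,1]$ (over $1$), and if $v_j\le\epsilon$ then $(1-s)v_j+sT_{\epsilon,\tau}(v_j)=(1-s)v_j\le\epsilon$, so the $j$-th coordinate of $T^{n}_{\epsilon,\tau}\circ H$ is identically $0$ there (symmetrically $1$ near the opposite face). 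Hence $T^{n}_{\epsilon,\tau}\circ H$ descends along the subduction $T^{n}_{\epsilon,\tau}\times\id_I$ (the product of a subduction with an identity is again a subduction in $\Diff$) to the smooth homotopy $G(t,u)=T^{n}_{\epsilon,\tau}\bigl(H((T^{n}_{\epsilon,\tau})^{-1}(t),u)\bigr)$ on $\tilde{I}^{n}_{\epsilon,\tau}$ from the identity to $T^{n}_{\epsilon,\tau}\circ 1_{\epsilon,\tau}$; this is exactly the map the paper writes down, with smoothness checked via \cite[1.51]{Zem} as in Lemma~\ref{lmm:property of tame}. With this reordering your argument coincides with the paper's proof.
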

\begin{proof}
Define $F \colon I^{n} \times I \to I^{n}$ by $F(t,u)=(1-u)t+uT^{n}_{\epsilon,\tau}(t)$.
Then $F$ gives a homotopy $1 \simeq 1_{\epsilon,\tau} \circ T^{n}_{\epsilon,\tau}$.
On the other hand,
if we define $G \colon \tilde{I}_{\epsilon,\tau} \times I \to \tilde{I}_{\epsilon,\tau}$ by 
$G(t,u)=T^{n}_{\epsilon,\tau}(F((T^{n}_{\epsilon,\tau})^{-1}(t),u))$,
then $G$ is smooth because its composition with the subduction $T^{n}_{\epsilon,\tau} \times 1$ is a smooth map 
$T^{n}_{\epsilon,\tau} \circ F$,
and gives a homotopy $1 \simeq T^{n}_{\epsilon,\tau} \circ 1_{\epsilon,\tau}$.
\end{proof}
For $0 < \delta < 1/2$,
let $J^{n-1}_{\delta}=\partial I^{n}\setminus
(\delta,1-\delta)^{n-1} \times \{0\}$.
\begin{lmm}
\label{lmm:equivalent of fibrations}
{%
  \emph{(1)} Any $\epsilon$-admissible map
  $f \colon J^{n-1} \to X$ can be extended to an
  $\epsilon$-admissible map
  $f_{\epsilon} \colon J^{n-1}_{\epsilon^{n-1}} \to X$.

  \emph{(2)} For any smooth map $p \colon X \to Y$ the
  following conditions are equivalent with each other.
  \begin{enumerate}
  \item[\rm (a)] $p$ is a $\J$-fibration.
  \item[\rm (b)] For every pair of $\epsilon$-admissible maps
    $f \colon J^{n-1}_{\epsilon^{n-1}} \to X$ and
    $g \colon I^n \to Y$ satisfying
    $p \circ f = g|J^{n-1}_{\epsilon^{n-1}}$, there exists
    an $\epsilon$-admissible map $h \colon I^n \to X$ such
    that $h|J^{n-1}_{\epsilon^{n-1}} = f$ and
    $p \circ h = g$ hold.
  \end{enumerate}
}%
\end{lmm}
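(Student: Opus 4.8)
The implication $(1) \Rightarrow (2)$ is the easy direction, since $J^{n-1}_{\epsilon^{n-1}}$ is a subset of $J^{n-1}$ extended inside the bottom face $I^{n-1}\times\{0\}$; more precisely $J^{n-1}_\delta = J^{n-1} \cup \bigl(\bI^{n-1}(\text{something})\cap(I^{n-1}\times\{0\})\bigr)$, and in fact $J^{n-1}_{\epsilon^{n-1}}$ is a cubical-type region obtained from $J^{n-1}$ by enlarging across the open box $(\epsilon^{n-1},1-\epsilon^{n-1})^{n-1}\times\{0\}$. Given $\epsilon$-admissible $f \colon J^{n-1}_{\epsilon^{n-1}} \to X$ and $g \colon I^n \to Y$ with $p\circ f = g|J^{n-1}_{\epsilon^{n-1}}$, I would first restrict $f$ to $J^{n-1}$ and apply the $\J$-fibration property of $p$ to the pair $(f|J^{n-1}, g)$, obtaining an $\epsilon$-admissible lift $h_0 \colon I^n \to X$ with $h_0|J^{n-1} = f|J^{n-1}$ and $p\circ h_0 = g$. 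This $h_0$ need not agree with $f$ on the extra part of $J^{n-1}_{\epsilon^{n-1}}$ lying in $I^{n-1}\times\{0\}$, so the plan is to correct it: both $h_0|I^{n-1}\times\{0\}$ and $f|I^{n-1}\times\{0\}$ are $\epsilon^{n-1}$-tame maps into $X$ lifting $g|I^{n-1}\times\{0\}$, and they already agree on $\bI^{n-1}\times\{0\}$; using Lemma~\ref{lmm:uniqueness criterion for tame maps} together with a homotopy rel $\bI^{n-1}$ (as in Lemma~\ref{lmm:taming of maps} and Proposition~\ref{prp:admissible replacement}) one adjusts $h_0$ near the bottom face to produce the desired $h$. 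The point is that modifying a map on a collar of the bottom face, keeping it fixed on $\bI^n$, preserves both $\epsilon$-admissibility and the relation $p\circ h = g$ since $g$ is $\epsilon^n$-tame there.

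For the harder direction $(2) \Rightarrow (1)$, suppose $p$ satisfies (2) and let $\epsilon$-admissible maps $f \colon J^{n-1} \to X$ and $g \colon I^n \to Y$ be given with $p\circ f = g|J^{n-1}$. I want to produce an $\epsilon$-admissible lift $h \colon I^n \to X$ extending $f$. The obstacle is that $J^{n-1}$ is \emph{smaller} than $J^{n-1}_{\epsilon^{n-1}}$: we are missing a lift over the enlarged bottom region $(\epsilon^{n-1},1-\epsilon^{n-1})^{n-1}\times\{0\}$ (intersected appropriately). So the strategy is to first extend $f \colon J^{n-1}\to X$ over this missing bottom piece to get an $\epsilon$-admissible map $\tilde f \colon J^{n-1}_{\epsilon^{n-1}} \to X$ lifting $g|J^{n-1}_{\epsilon^{n-1}}$, and then invoke hypothesis (2). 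To perform this extension I would use the fact that $f$ restricted to $\bI^{n-1}\times\{0\}$ is already $\epsilon^{n-1}$-tame, and combine Corollary~\ref{crl:admissible maps are extendable} (or directly Proposition~\ref{prp:tame maps are extendable}, noting that the region $I^{n-1}\times\{0\}$ relative to its frame $\bI^{n-1}\times\{0\}$ together with a collar is linearly diffeomorphic to a pair $(I^{n-1},\bI^{n-1})$ or $(I^n,J^{n-1})$ of the type handled there) to extend $f|\bI^{n-1}\times\{0\}$ across the bottom face as a map into $X$; one must do this so that it lifts $g$ on the bottom, which requires first choosing a lift of $g|I^{n-1}\times\{0\}$ extending $f|\bI^{n-1}\times\{0\}$, using that $g|I^{n-1}\times\{0\}$ is $\epsilon^{n-1}$-tame.

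The main obstacle I anticipate is this bottom-face extension step in $(2)\Rightarrow(1)$: one needs a lift of $g$ over $I^{n-1}\times\{0\}$ prescribed on $\bI^{n-1}\times\{0\}$, and there is no ambient fibration hypothesis available over just an $(n-1)$-cube — only the subset $J^{n-1}_{\epsilon^{n-1}}$ version of (2). I expect the resolution is that one does \emph{not} need a genuine lift over the bottom face a priori; instead one extends $f$ trivially in a way compatible with tameness (e.g.\ by precomposing with a retraction-like smash $T^{n-1}_{\sigma,\epsilon}$ that collapses a neighborhood of $\bI^{n-1}\times\{0\}$, as in Lemma~\ref{lmm:property of tame}), so that the extended $\tilde f$ automatically lifts $g$ there because of $\epsilon^{n}$-tameness of $g$ on the bottom collar. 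Once $\tilde f$ is in hand, hypothesis (2) delivers $h \colon I^n\to X$ with $h|J^{n-1}_{\epsilon^{n-1}} = \tilde f$ and $p\circ h = g$; since $J^{n-1}\subset J^{n-1}_{\epsilon^{n-1}}$ and $\tilde f|J^{n-1} = f$, this $h$ is the required lift, completing the equivalence.
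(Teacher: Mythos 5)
Your overall architecture is the same as the paper's, but in both directions the step you leave vague is precisely where the content of the lemma lies. For $(1)\Rightarrow(2)$, no correction of $h_0$ is needed, and the correction you sketch is not justified. The paper's point is that the $\J$-fibration lift $h_0$ is already the desired map: since $h_0$ is $\epsilon$-admissible, its restriction to the bottom face $I^{n-1}\times\{0\}$ is $\epsilon^{n-1}$-tame, and every point of $J^{n-1}_{\epsilon^{n-1}}\cap(I^{n-1}\times\{0\})$ lies within $\epsilon^{n-1}$ of a face contained in $\bI^{n-1}\times\{0\}\subset J^{n-1}$, where $h_0=f$; the same tameness holds for $f$ there, so $h_0$ and $f$ coincide on all of $J^{n-1}_{\epsilon^{n-1}}$ outright. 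You state exactly the facts that give this (both maps $\epsilon^{n-1}$-tame, agreeing on $\bI^{n-1}\times\{0\}$) but then propose to homotope $h_0$ near the bottom face instead; as described, you do not show that the adjusted map is still a lift of $g$ (tameness of $g$ does not license an arbitrary modification of $h_0$ over the collar) nor that it remains $\epsilon$-admissible on $I^n$, and the whole step should simply be deleted in favour of the observation you already made.

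For $(2)\Rightarrow(1)$, your final resolution (extend $f$ over the bottom collar by precomposing with a smash collapsing a neighbourhood of $\bI^{n-1}\times\{0\}$) is indeed the paper's construction, but the justification ``the extension automatically lifts $g$ because of the tameness of $g$'' does not suffice, and the parameter bookkeeping you omit is essential. For $f\circ T^{n-1}_{\sigma,\tau}$ to be defined on the collar one needs $\sigma\geq\epsilon^{n-1}$, hence $\tau>\epsilon^{n-1}$; the smash then moves coordinates lying in $(\epsilon^{n-1},\tau)$, where $g$, being only $\epsilon^{n-1}$-tame on the bottom face, need not be constant, so $g\circ T^{n-1}_{\sigma,\tau}\neq g$ in general. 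The paper sets $f_{\epsilon}(t,0)=f(T^{n-1}_{\epsilon^{n-1},\epsilon^{n-2}}(t),0)$ and uses in addition that $f$ is $\epsilon^{n-2}$-tame on $\bI^{n-1}\times\{0\}$ (a union of $(n-2)$-dimensional faces): this finer tameness of $f$ is what guarantees both that $f_{\epsilon}$ really extends $f$ (agreement on $\bI^{n-1}\times\{0\}$, which you never check) and, together with the $\epsilon^{n-1}$-tameness of $g$ on the bottom face, that $p\circ f_{\epsilon}=g$ on $J^{n-1}_{\epsilon^{n-1}}$. Without invoking the tameness of $f$ on $\bI^{n-1}\times\{0\}$ your extension step does not close; with it, your argument becomes exactly the paper's.
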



\begin{proof}
  {%
    (1) \ Since $f$ is $\epsilon^{n-2}$-tame on
    \[
    J^{n-1}_{\epsilon^{n-1}} \cap I^{n-1} \times
    \{0\}=I^{n-1}\setminus(\epsilon^{n-1}, 1-\epsilon^{n-1})^{n-1},
    \]
    we can extend $f$
    to an $\epsilon^{n-1}$-admissible map
    $f_{\epsilon} \colon J^{n-1}_{\epsilon^{n-1}} \to X$ by
    assigning
    \[
      f_{\epsilon}(t,0) =
      f(T^{n-1}_{\epsilon^{n-1},\epsilon^{n-2}}(t),0) \ \
      \text{for} \ \ (t,0)\in J^{n-1}_{\epsilon^{n-1}} \cap
      I^{n-1} \times \{0\}.
    \]

    (2) \
  }%
  Suppose $p \colon X \to Y$ is a $\J$-fibration.  Let
  $f \colon J^{n-1}_{\epsilon^{n-1}} \to X$ and
  $g \colon I^{n} \to Y$ be $\epsilon$-admissible maps satisfying
  $p \circ f = g|J^{n-1}_{\epsilon^{n-1}}$.  Then we have
  $p \circ f|J^{n-1} = g|J^{n-1}$, and hence there exists an
  $\epsilon$-admissible map $h \colon I^{n} \to X$ satisfying
  $h|J^{n-1} = f|J^{n-1}$ and $p \circ h = g$.  But as $h$ is
  $\epsilon^{n-1}$-tame on $I^{n-1} \times \{0\}$, it must
  coincides with $f$ on
  $J^{n-1}_{\epsilon^{n-1}} \cap I^{n-1} \times \{0\}$.
  Thus we have $h|J^{n-1}_{\epsilon^{n-1}} = f$, implying that $p$
  satisfies the condition { (b)}.

  Conversely, suppose $p$ satisfies { (b)}.  Let
  $f \colon J^{n-1} \to X$ and $g \colon I^{n} \to Y$ be
  $\epsilon$-admissible maps satisfying $p \circ f = g|J^{n-1}$,
  {%
  and let
  $f_{\epsilon} \colon J^{n-1}_{\epsilon^{n-1}} \to X$ be an
  $\epsilon^{n-1}$-admissible extension of $f$ given by (1).
  Then we have
  $p \circ f_{\epsilon}=g|J^{n-1}_{\epsilon^{n-1}}$
  }%
  because $f_{\epsilon}$ and $g$ are
  $\epsilon^{n-1}$-tame on $J^{n-1}_{\epsilon^{n-1}} \cap I^{n-1} \times \{0\}$.
  Thus,
  there exists by { (b)} an $\epsilon$-admissible map $h \colon I^{n} \to X$ satisfying $h|J^{n-1}=f$ and $p \circ h=g$,
  meaning that $p$ is a $\J$-fibration.
\end{proof}
{%
In the sequel, we denote
$\tilde{I}^n_{\epsilon^n} =
\tilde{I}_{\epsilon^{n},\epsilon^{n-1}}^{\,n}$,
$T^{n}_{\epsilon^{n}}=T^{n}_{\epsilon^{n},\epsilon^{n-1}} \colon
I^{n} \to \tilde{I}^{n}_{\epsilon^{n}}$, and by
$K^{n-1}_{\epsilon}$ either $\bI^{n}$ or
$J^{n-1}_{\epsilon^{n-1}}$ according as $\K$ is $\I$ or
$\J$.
For any smooth map
}%
$\phi_{\epsilon} \colon K^{n-1}_{\epsilon} \to X$, let
$X \cup_{\phi_{\epsilon}} \tilde{I}^{n}_{\epsilon^{n}}$
denote the adjunction space given by a pushout square:
%
\[
  \xymatrix{%
    K^{n-1}_{\epsilon}
    \ar[r]^-{\phi_{\epsilon}} 
    \ar[d]^-{{T^{n}_{\epsilon^{n}}}} 
    & 
    X 
    \ar[d]^-{i}
    \\
    \tilde{I}^{n}_{\epsilon^{n}}
    \ar[r]^-{\Phi} 
    & 
    X \cup_{\phi_{\epsilon}} \tilde{I}^{n}_{\epsilon^{n}}.
  }
\]

\begin{prp}\label{prp:cofibrations}
  Let $\K$ be either $\I$ or $\J$, and
  $K^{n-1}_{\epsilon}$ be $\partial I^{n}$ or $J^{n-1}_{\epsilon^{n-1}}$ according as $\K$ is $\I$ or $\J$.
  Then for any $\epsilon$-admissible map $\phi_{\epsilon} \colon K^{n-1}_{\epsilon} \to X$ the inclusion 
  $i \colon X \to X \cup_{\phi_{\epsilon}} \tilde{I}^{n}_{\epsilon^{n}}$ is a $\K$-cofibration.
\end{prp}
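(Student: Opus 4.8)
The plan is to verify that $i$ has the left lifting property with respect to $\K$-fibrations. So fix a $\K$-fibration $p\colon E\to B$ and smooth maps $a\colon X\to E$, $b\colon X\cup_{\phi_\epsilon}\tilde I^n_{\epsilon^n}\to B$ with $p\circ a=b\circ i$, and seek a lift $\ell$. By the pushout property, giving $\ell$ is the same as giving a smooth map $\psi\colon\tilde I^n_{\epsilon^n}\to E$ with $\psi\circ(t^n_\epsilon|K^{n-1}_\epsilon)=a\circ\phi_\epsilon$ and $p\circ\psi=b\circ\Phi$. Since $t^n_\epsilon\colon I^n\to\tilde I^n_{\epsilon^n}$ is a subduction whose point identifications are exactly those forced by $\epsilon^n$-tameness, Lemma~\ref{lmm:property of tame} reduces this to finding an $\epsilon^n$-tame smooth map $h\colon I^n\to E$ with $h|K^{n-1}_\epsilon=f$ and $p\circ h=g$, where $f:=a\circ\phi_\epsilon$ and $g:=(b\circ\Phi)\circ t^n_\epsilon$. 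Here $f$ is $\epsilon$-admissible because $\phi_\epsilon$ is and postcomposition by a smooth map preserves admissibility, while $g$ is $\epsilon^n$-tame — being $b\circ\Phi$ precomposed with $t^n_\epsilon=T^n_{\epsilon^n,\epsilon^{n-1}}$ — and restricts on $K^{n-1}_\epsilon$ to $p\circ f$.

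This is almost an instance of the lifting property built into Definition~\ref{dfn:K-fibration} (for $\K=\J$ one uses instead the $J^{n-1}_{\epsilon^{n-1}}$-formulation from Lemma~\ref{lmm:equivalent of fibrations}), the one discrepancy being that $g$ is only $\epsilon^n$-tame and not $\epsilon$-admissible; invoking the fibration property at level $\epsilon^n$ would return only an $\epsilon^n$-admissible $h$, hence one that is merely $\epsilon^{n^2}$-tame on the top cell and so fails to descend along $t^n_\epsilon$. I would circumvent this as follows: (i) using Proposition~\ref{prp:admissible replacement}, homotope $g$ — relative to the cubical subcomplex on which it already coincides with the $\epsilon$-admissible map $p\circ f$, namely $\partial I^n$ when $\K=\I$ and $J^{n-1}$ when $\K=\J$ — to an $\epsilon$-admissible map $g'$; (ii) apply the $\K$-fibration property at level $\epsilon$ to the pair $(f,g')$ to get an $\epsilon$-admissible $h'\colon I^n\to E$ extending $f$ with $p\circ h'=g'$; (iii) lift the homotopy from $g'$ back to $g$, starting from $h'$ and keeping $f$ fixed over $K^{n-1}_\epsilon$ — when $\K=\I$ the part of $I^n\times I$ on which the lift is prescribed is a copy of $J^n$, so this is a $\J$-lifting problem and is solved because a $\K$-fibration is in particular a $\J$-fibration. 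Throughout, the data are arranged to remain $\epsilon$-admissible, so the lifted homotopy is $\epsilon$-admissible on $I^n\times I$; its terminal slice is then an $\epsilon^n$-tame $h$ with the required properties, hence descends to the desired $\psi$, and unwinding the pushout shows $p\circ\psi=b\circ\Phi$ and $\psi\circ(t^n_\epsilon|K^{n-1}_\epsilon)=a\circ\phi_\epsilon$, producing $\ell$.

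I expect the tameness-degree mismatch in step~(ii) to be the main obstacle: $g$ only carries the amount of tameness that survives precomposition by $t^n_\epsilon$, whereas the extension over $I^n$ has to be manufactured at the full level $\epsilon$ so that it can be pushed back down onto $\tilde I^n_{\epsilon^n}$, which is what forces the replacement–and–homotopy-lift detour and the careful tracking of admissibility degrees on all faces of the cubes involved. A secondary subtlety, specific to $\K=\J$, is that $K^{n-1}_\epsilon=J^{n-1}_{\epsilon^{n-1}}$ is not a cubical subcomplex, so Proposition~\ref{prp:admissible replacement} must be applied relative to the genuine subcomplex $J^{n-1}\subset J^{n-1}_{\epsilon^{n-1}}$, with the behaviour on the bottom collar $J^{n-1}_{\epsilon^{n-1}}\setminus J^{n-1}$ — where $f$ is already $\epsilon^{n-1}$-tame — handled separately; Lemma~\ref{lmm:equivalent of fibrations} is the device that keeps this bookkeeping under control.
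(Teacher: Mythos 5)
Your reduction of the lifting problem (via the pushout and Lemma~\ref{lmm:property of tame}) to producing an $\epsilon^{n}$-tame lift $h\colon I^{n}\to E$ of $(f,g)$ with $f=a\circ\phi_{\epsilon}$, $g=b\circ\Phi\circ t^{n}_{\epsilon}$ is exactly the paper's setup, but the premise on which your whole detour rests is false: $g$ is in fact $\epsilon$-admissible, not merely $\epsilon^{n}$-tame. Indeed, on $K^{n-1}_{\epsilon}$ the pushout square gives $g=p\circ a\circ\phi_{\epsilon}=p\circ f$, and $\phi_{\epsilon}$ is $\epsilon$-admissible; every proper face of $I^{n}$ either lies in $K^{n-1}_{\epsilon}$ (all of them when $\K=\I$, the faces in $J^{n-1}$ when $\K=\J$), or is the bottom face $I^{n-1}\times\{0\}$, whose $\epsilon^{n-1}$-tameness only involves points of the $\epsilon^{n-1}$-collar of its boundary, and that collar is precisely $J^{n-1}_{\epsilon^{n-1}}\cap(I^{n-1}\times\{0\})$, where $g=p\circ f$ again; finally, on the top cell admissibility asks only for $\epsilon^{n}$-tameness, which $t^{n}_{\epsilon}=T^{n}_{\epsilon^{n},\epsilon^{n-1}}$ supplies. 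This is the whole point of forming the pushout over $J^{n-1}_{\epsilon^{n-1}}$ rather than $J^{n-1}$ and of Lemma~\ref{lmm:equivalent of fibrations}: the paper simply applies the $\K$-fibration property once (in the $J^{n-1}_{\epsilon^{n-1}}$-form for $\K=\J$, directly from Definition~\ref{dfn:K-fibration} for $\K=\I$) to the pair $(f,g)$, obtains an $\epsilon$-admissible, hence $\epsilon^{n}$-tame, lift $h'$, descends it along $t^{n}_{\epsilon}$ by Lemma~\ref{lmm:property of tame}, and concludes by the universal property of the pushout. Your replacement-and-homotopy-lifting detour is therefore not needed.

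Moreover, even taken on its own terms the detour has real gaps. Proposition~\ref{prp:admissible replacement} produces a homotopy rel the chosen subcomplex, but you need that homotopy to be $\epsilon$-admissible as a map on $I^{n}\times I=I^{n+1}$, both to pose step~(iii) as an admissible lifting problem and to ensure the terminal slice carries enough tameness to descend along $t^{n}_{\epsilon}$; the proposition does not assert this and you do not prove it. More seriously, for $\K=\J$ (the case you do not treat) the region on which the step-(iii) lift is prescribed, $J^{n-1}\times I\cup I^{n}\times\{0\}$, is the union of all $n$-faces of $I^{n+1}$ except two, hence is not (linearly) a copy of $J^{n}$, so the $\J$-fibration property cannot be invoked as stated; one would need an extra gluing argument of the kind used in Example~\ref{examples of fibrations}(3). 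Finally, the terminal slice must agree with $f$ on all of $J^{n-1}_{\epsilon^{n-1}}$, not just on $J^{n-1}$, for the pushout compatibility; this can be recovered from tameness as in the first half of Lemma~\ref{lmm:equivalent of fibrations}, but it is another step your sketch leaves implicit.
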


\begin{proof}
  Let $p \colon E \to B$ be a $\K$-fibration and let
  $f \colon X \to E$ and $g \colon Y \to B$ be smooth maps
  satisfying $p \circ f=g \circ i$.  Then we have a commutative
  diagram
  \begin{eqnarray}
    \xymatrix{%
      K^{n-1}_{\epsilon} 
      \ar[r]^{=} \ar[d] 
      &
      K^{n-1}_{\epsilon} 
      \ar[r]^-{\phi_{\epsilon}} 
      \ar[d]^-{{T^{n}_{\epsilon^{n}}}} 
      &
      X 
      \ar[d]^-{i} 
      \ar[r]^-{f} 
      &
      E 
      \ar[d]^-{p}
      \\
      I^{n} 
      \ar[r]^{{T^{n}_{\epsilon^{n}}}} 
      &
      \tilde{I}^n_{\epsilon^n} 
      \ar[r]^-{\Phi} 
      &
      X \cup_{\phi_{\epsilon}} \tilde{I}^n_{\epsilon^n}
      \ar[r]^-{g} 
      & 
      B.
    }
  \end{eqnarray}
  %
  Since $p$ is a $\K$-fibration, and since $f \circ \phi_{\epsilon}$ and
  $g \circ \Phi \circ {T^{n}_{\epsilon^{n}}}$ are $\epsilon$-admissible,
  there exists by Lemma \ref{lmm:equivalent of fibrations} {(2)} an $\epsilon$-admissible (hence $\epsilon^{n}$-tame)
  lift $h' \colon I^{n} \to E$ making the diagram commutative,
  which in turn induces by Lemma \ref{lmm:property of tame}, 
  a smooth map
  $\tilde{h}' \colon \tilde{I}^{n}_{\epsilon^n} \to E$ satisfying $h^{\prime}=\tilde{h}^{\prime} \circ {T^{n}_{\epsilon^{n}}}$.
  Now,
  we have
  \begin{enumerate}
  \item 
   $h' \circ {T^{n}_{\epsilon^{n}}}|K^{n-1}_{\epsilon}=h^{\prime}|K^{n-1}_{\epsilon}=f \circ \phi_{\epsilon}$,
   and
  \item
    $p \circ \tilde{h}' = p \circ h' \circ ({T^{n}_{\epsilon^{n}}})^{-1}
    = g \circ \Phi \circ {T^{n}_{\epsilon^{n}}} \circ
    ({T^{n}_{\epsilon^{n}}})^{-1} = g \circ \Phi$.
  \end{enumerate}
  Thus, by the property of pushouts, there is a lift
  $h \colon X \cup_{\phi_{\epsilon}} \tilde{I}^n_{\epsilon^n} \to E$ such that $h \circ i = f$ and $p \circ h = g$ hold.
\end{proof}

\begin{prp}\label{prp:deformation retract}
  Suppose $n \geq 1$ and $0 < \epsilon \leq 1/2$.  If
  { $\phi_{\epsilon} \colon J^{n-1}_{\epsilon^{n-1}} \to X$} is an
  $\epsilon$-admissible map then $X$ is a deformation retract of
  $X \cup_{\phi_{\epsilon}} \tilde{I}^n_{\epsilon^n}$.
\end{prp}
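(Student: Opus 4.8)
The plan is to exhibit an explicit deformation retraction of $Y := X \cup_{\phi_\epsilon} \tilde I^n_{\epsilon^n}$ onto $X$. The space $Y$ is built by attaching the "fat cube" $\tilde I^n_{\epsilon^n}$ along the subduction $t^n_\epsilon \colon J^{n-1}_{\epsilon^{n-1}} \to $ (its image in $\tilde I^n_{\epsilon^n}$), where $J^{n-1}_{\epsilon^{n-1}} = \partial I^n \setminus (\epsilon^{n-1}, 1-\epsilon^{n-1})^{n-1} \times \{0\}$, i.e.\ all of $\partial I^n$ except the open $\epsilon^{n-1}$-chamber of the bottom face. So morally $Y$ is $X$ with an $n$-cell glued along "most of" its boundary sphere, the unglued part being a disk; such an attachment is a deformation retract onto $X$ for the same reason the topological mapping cylinder deformation retracts. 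The key geometric input is a strong deformation retraction of the pair $(\tilde I^n_{\epsilon^n}, t^n_\epsilon(J^{n-1}_{\epsilon^{n-1}}))$ onto the subspace $t^n_\epsilon(J^{n-1}_{\epsilon^{n-1}})$, carried out \emph{rel} that subspace, and compatible with $t^n_\epsilon$ in the sense needed to descend to $Y$.

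First I would work upstairs on $I^n$ itself. The set $J^{n-1}_{\epsilon^{n-1}}$ is, up to the relevant linear diffeomorphism, a copy of $J^{n-1}$ sitting inside $\partial I^n$, and $J^{n-1}$ is a (smooth) deformation retract of $I^n$: indeed $I^{n-1} \times \{1\}$ is a deformation retract of $J^{n-1}$ and one can deformation retract $I^n$ onto $J^{n-1}$ by pushing radially away from a center point of the open bottom chamber. (This is the smooth analogue of the standard fact; one writes down the retraction using the functions $\lambda$ and $T_{\sigma,\tau}$ already available, much as in the proof of Lemma~\ref{lmm:approximate retraction exists}, to keep everything smooth and tame.) Call this homotopy $D \colon I^n \times I \to I^n$, with $D_0 = \mathrm{id}$, $D_1(I^n) = J^{n-1}_{\epsilon^{n-1}}$, and $D_t$ fixing $J^{n-1}_{\epsilon^{n-1}}$ pointwise for all $t$. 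The point to check is that $D$ can be taken $\epsilon^n$-tame (hence factors through $t^n_\epsilon$ in each variable by Lemma~\ref{lmm:property of tame}), so that it induces a smooth homotopy $\tilde D \colon \tilde I^n_{\epsilon^n} \times I \to \tilde I^n_{\epsilon^n}$; here I would invoke Proposition~\ref{prp:homotopy equivalent of cubes} and the taming machinery of Section~3 to arrange the tameness, possibly after composing with $t^n_\epsilon$.

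Next I would assemble the deformation retraction of $Y$ onto $X$. Define $H \colon Y \times I \to Y$ to be the identity on $X \times I$ and to be $\Phi \circ \tilde D \colon \tilde I^n_{\epsilon^n} \times I \to \tilde I^n_{\epsilon^n} \to Y$ on the image of $\tilde I^n_{\epsilon^n}$. For this to be well-defined on the pushout I need the two descriptions to agree on $t^n_\epsilon(J^{n-1}_{\epsilon^{n-1}}) \times I$: there $\tilde D$ is the constant homotopy (since $D$ fixes $J^{n-1}_{\epsilon^{n-1}}$), so $\Phi \circ \tilde D$ restricted there equals $\Phi \circ (\text{inclusion}) = i \circ \phi_\epsilon$ composed appropriately, which is exactly the image of $X$; so the gluing is consistent and $H$ is smooth by the universal property of the pushout (a pushout of smooth maps, with $H$ smooth on each factor and agreeing on the overlap). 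Then $H_0 = \mathrm{id}_Y$, $H_1$ maps $Y$ into $X$ (because $\tilde D_1$ maps $\tilde I^n_{\epsilon^n}$ into $t^n_\epsilon(J^{n-1}_{\epsilon^{n-1}})$, whose $\Phi$-image lies in $i(X)$), and $H_t$ fixes $i(X)$ pointwise for all $t$. Writing $\gamma = H_1 \colon Y \to X$, this exhibits $X$ as a deformation retract of $Y$ with deformation retraction $\gamma$.

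The main obstacle I anticipate is not the topology—the gluing argument is the standard mapping-cylinder one—but the \emph{smoothness/tameness bookkeeping}: constructing the deformation retraction $D$ of $I^n$ onto $J^{n-1}_{\epsilon^{n-1}}$ so that (i) it is genuinely smooth (not merely continuous) at the "corner" where the bottom chamber meets the rest of $\partial I^n$, (ii) it is $\epsilon^n$-tame in each coordinate so that it descends through the subduction $t^n_\epsilon$ to give a smooth $\tilde D$ on the fat cube $\tilde I^n_{\epsilon^n}$, and (iii) it fixes $J^{n-1}_{\epsilon^{n-1}}$ on the nose (not just up to reparametrization) so the pushout gluing is literally consistent. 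Getting all three simultaneously will require a careful explicit formula built from $\lambda$ and the functions $T_{\sigma,\tau}$, in the spirit of Lemmas~\ref{lmm:modified smash function} and~\ref{lmm:approximate retraction exists}; I expect this to be where most of the real work lies.
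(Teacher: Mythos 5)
The central step of your plan---a smooth homotopy $D \colon I^n \times I \to I^n$ with $D_0=\mathrm{id}$, $D_1(I^n)\subset J^{n-1}_{\epsilon^{n-1}}$, and every $D_t$ fixing $J^{n-1}_{\epsilon^{n-1}}$ \emph{pointwise}---does not exist, so the difficulty you deferred to ``tameness bookkeeping'' is in fact fatal to the approach as stated. Already the time-one map cannot exist: take a smooth curve $c(t)$ inside the bottom face $I^{n-1}\times\{0\}$ crossing the rim of the removed chamber transversally at $t=0$, entering the open chamber for $t>0$. A smooth retraction $r$ onto $J^{n-1}_{\epsilon^{n-1}}$ fixing it pointwise satisfies $r(c(t))=c(t)$ for $t\le 0$, while for small $t>0$ continuity forces $r(c(t))$ to lie in the bottom face on the non-chamber side of the rim; the relevant coordinate of $r\circ c$ then has one-sided derivative $1$ from the left and $\le 0$ from the right at $t=0$, a contradiction. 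This is exactly the same obstruction as the non-existence of a smooth retraction $I^n \to J^{n-1}$ that the paper is organized around; passing from $J^{n-1}$ to $J^{n-1}_{\epsilon^{n-1}}$ does not remove it, because the rim of the chamber creates the same ``identity on one side, trapped on the other'' corner in the interior of the bottom face. So item (iii) of your checklist cannot be arranged, and without it your pushout gluing (identity on $X\times I$, $\Phi\circ\tilde D$ on the cell) is not even well defined.

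The paper's proof circumvents precisely this point. It uses only an \emph{approximate} retraction (Lemma~\ref{lmm:approximate retraction exists}) and builds a homotopy $h_u$ that fixes merely the $\epsilon^{n-1}$-chamber $J^{n-1}(\epsilon^{n-1})$ pointwise while mapping $J^{n-1}_{\epsilon^{n-1}}$ into itself; the compatibility needed for the pushout is then recovered \emph{downstairs}, using the $\epsilon$-admissibility (hence $\epsilon^{n-1}$-tameness) of $\phi_{\epsilon}$ together with the uniqueness criterion for tame maps (Lemma~\ref{lmm:uniqueness criterion for tame maps}): $\phi_{\epsilon}$ collapses the collars on which $h_u$ fails to be the identity, so $G_u \circ t^n_{\epsilon} = i\circ\phi_{\epsilon}$ on $J^{n-1}_{\epsilon^{n-1}}$ even though $h_u$ is not relative to that set. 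A second consequence you would also need to absorb is that the glued homotopy then starts at $\Phi\circ t^n_{\epsilon}$ rather than at the identity of the adjunction space, which the paper repairs by prepending the homotopy $x\mapsto \Phi((1-u)x+u\,t^n_{\epsilon}(x))$ from Proposition~\ref{prp:homotopy equivalent of cubes}, and the descent of everything through the subduction $t^n_{\epsilon}$ is what Lemma~\ref{lmm:property of tame} is invoked for. In short: your topological outline is the right picture, but the proof must be restructured around approximate retractions plus tameness of the attaching map, not around a strong deformation retraction rel $J^{n-1}_{\epsilon^{n-1}}$, which is unavailable in the smooth setting.
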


\begin{proof}
  {%
  Let $R \colon I^n \to J^{n-1}$ be an
  $\epsilon^{n-1}$-approximate retraction, say
  $R = R_{\epsilon^n,\epsilon^{n-1}}$ (cf.\
  Lemma~\ref{lmm:approximate retraction exists}),
  and define $h \colon I^{n} \times I \to I^{n}$ by the
  formula
  \[
    h(t,u)=(1-u)t + uR(t), \quad (t,u) \in
    I^n \times I.
  \]
  Then the following hold.
  \begin{enumerate}
  \item For each $u \in I$,
    $h_{u}=h|I^{n} \times \{u\} \colon I^{n} \to I^{n}$ maps
    $J^{n-1}_{\epsilon^{n-1}}$ into
    $J^{n-1}_{\epsilon^{n-1}}$, and restricts to the
    identity on its $\epsilon^{n-1}$-chamber
    $J^{n-1}(\epsilon^{n-1})$.
  \item $h_{0}=1$ and $h_{1}$ is an
    $\epsilon^{n-1}$-approximate retraction of $I^{n}$ onto
    $J^{n-1}$.
  \end{enumerate}
  Let
  $\tilde{h} = {T^{n}_{\epsilon^{n}}} \circ h \circ
  ({T^{n}_{\epsilon^{n}}} \times 1) \colon I^{n} \times I \to
  \tilde{I}^{n}_{\epsilon^{n}}$.  Then $\tilde{h}_{u}$ is
  $\epsilon^{n}$-tame for all $u \in I$, and hence there
  exists by Lemma \ref{lmm:property of tame} a homotopy
  $G \colon \tilde{I}^{n}_{\epsilon^{n}} \times I \to X
  \cup_{\phi_{\epsilon}} \tilde{I}^{n}_{\epsilon^{n}}$ such
  that the diagram below is commutative:
  \[
    \xymatrix{ I^{n} \times I \ar[r]^{{T^{n}_{\epsilon^{n}}}
        \times 1} \ar[d]^{\tilde{h}} &
      \tilde{I}^{n}_{\epsilon^{n}} \times I \ar[d]^{G}
      \\
      \tilde{I}^{n}_{\epsilon^{n}} \ar[r]^(0.4){\Phi} & X
      \cup_{\phi_{\epsilon}} \tilde{I}^{n}_{\epsilon^{n} }.
    }
  \]
  But then we have
  \[
    G_{u} \circ {T^{n}_{\epsilon^{n}}} = \Phi \circ \tilde{h}_u =
    \Phi \circ {T^{n}_{\epsilon^{n}}} \circ h_{u} \circ
    {T^{n}_{\epsilon^{n}}} = i \circ \phi_{\epsilon} \circ h_{u}
    \circ {T^{n}_{\epsilon^{n}}} = i \circ \phi_{\epsilon} \
    \mbox{on} \ J^{n-1}_{\epsilon^{n-1}}
  \]
  by Lemma \ref{lmm:uniqueness criterion for tame maps},
  because
  $i \circ \phi_{\epsilon} \colon J^{n-1}_{\epsilon^{n-1}}
  \to X \cup_{\phi_{\epsilon}} \tilde{I}^{n}_{\epsilon^{n}}$
  is $\epsilon^{n-1}$-tame and
  $h_{u} \circ {T^{n}_{\epsilon^{n}}}$ restricts to the identity
  on $J^{n-1}(\epsilon^{n-1})$.  Hence there exists a map
  \[
    H \colon (X \cup_{\phi_{\epsilon}}
    \tilde{I}^{n}_{\epsilon^{n}}) \times I \to X
    \cup_{\phi_{\epsilon}} \tilde{I}^{n}_{\epsilon^{n}}
  \]
  such that the diagram below is commutative.
  \[
    \xymatrix@C50pt{ X \times I \coprod
      \tilde{I}^{n}_{\epsilon^{n}} \times I \ar[r]^(0.55){i
        \circ pr \bigcup G} \ar[d]_{(i \times 1) \bigcup
        (\Phi \times 1)} & X \cup_{\phi_{\epsilon}}
      \tilde{I}^{n}_{\epsilon^{n}}
      \\
      (X \cup_{\phi_{\epsilon}}
      \tilde{I}^{n}_{\epsilon^{n}}) \times I \ar[ru]_{H} }
  \]
  Since the vertical map
  $(i \times 1) \bigcup (\Phi \times 1)$ is a subduction,
  $H$ gives a smooth homotopy relative to $X$ such that
  $H_{0}|\tilde{I}^{n}_{\epsilon^{n}}=\Phi \circ
  {T^{n}_{\epsilon^{n}}}$ and
  $H_{1}( \tilde{I}^{n}_{\epsilon^n}) \subset X$ hold.  Now,
  we can define a retracting homotopy of
  $X \cup_{\phi_{\epsilon}} \tilde{I}^{n}_{\epsilon^{n}}$
  onto $X$ to be the composition
  $1 \simeq H_{0} \simeq H_{1}$, where $1 \simeq H_{0}$ is
  induced by the homotopy
  $h \colon \tilde{I}^{n}_{\epsilon^{n}} \times I \to
  \tilde{I}^{n}_{\epsilon^{n}}$ given by the formula:
  $h(x,u)=\Phi((1-u)x+u{T^{n}_{\epsilon^{n}}}(x))$ for
  $(x,u) \in \tilde{I}^{n}_{\epsilon^{n}} \times I$.
  (Cf.~Proposition \ref{prp:homotopy equivalent of cubes}.)
  }%
\end{proof}
We are ready to construct a factorization $X \to G^{\infty}(\K,f) \to Y$ of a smooth map $f \colon X \to Y$.
Let $K^{n-1}$ be either $\partial I^{n}$ or $J^{n-1}$ according as $\K$ is $\I$ or $\J$.
Let $S_{n}(\K,f)$ be the set of pairs of admissible maps $\phi \colon K^{n-1} \to X$ and $\psi \colon I^{n} \to Y$ satisfying $f \circ \phi = \psi|K^{n-1}$.
  Suppose $\phi$ and $\psi$ are
  $\epsilon$-admissible.  
  {%
  Then by Lemmas~\ref{lmm:property of tame} and
  \ref{lmm:equivalent of fibrations}\,(1),
  }%
  there are a smooth map
  $\tilde{\psi} \colon \tilde{I}^{n}_{\epsilon^{n}} \to Y$
  satisfying $\tilde{\psi} \circ {T^{n}_{\epsilon^{n}}}=\psi$ and
  an $\epsilon$-admissible map
  $\phi_{\epsilon} \colon K^{n-1}_{{\epsilon}} \to X$
  satisfying $\phi_{\epsilon}|K^{n-1}=\phi$.
  {%
  Now, let
  }%
\[
  G^{1}(\K,f) = \bigcup_{n\geq 0}\bigcup_{(\phi,\psi) \in {S_{n}(\K,f)}}
  X \cup_{\phi_{\epsilon}}
  \tilde{I}^n_{\epsilon^n}.
\]
Then there are natural maps $i_{1} \colon X \to G^{1}(\K,f)$ and $p_{1} \colon G^{1}(\K,f) \to Y$ induced by the inclusions $X \to X \cup_{\phi_{\epsilon}} \tilde{I}^{n}_{\epsilon^{n}}$ and 
$f \cup \tilde{\psi} \colon X \coprod \tilde{I}^{n}_{\epsilon^{n}} \to Y$,
respectively,
such that we have
\[
f=p_{1}\circ i_{1} \colon X \xrightarrow{i_{1}}G^{1}(\K,f) \xrightarrow{p_{1}}Y.
\]
This process can be repeated to construct $G^{l}(\K,f)$ for all $l>1$.
Suppose $p_{l-1} \colon G^{l-1}(\K,f) \to Y$ exists.
Then there are a space $G^{l}(\K,f)=G^{1}(\K,p_{l-1})$ together with a factorization
\[
  p_{l-1} = p_l \circ i_l \colon G^{l-1}(\K,f) \xrightarrow{i_l}
  G^l(\K,f) \xrightarrow{p_l} Y.
\]
Consequently,
we obtain a commutative diagram
\[
  \xymatrix{%
    X \ar[r]^(0.4){i_{1}} \ar[d]^{f}
    & G^{1}(\K,f) \ar[r]^{i_{2}} \ar[d]^{p_{1}}
    & G^{2}(\K,f) \ar[r]^(0.6){i_{3}} \ar[d]^{p_{2}}
    & \cdots \ar[r]^(0.4){i_{l}}
    & G^{l}(\K,f) \ar[r] \ar[d]^{p_{l}}
    & \cdots
    \\
    Y \ar[r]^{=}
    & Y \ar[r]^{=}
    & Y \ar[r]^{=}
    & \cdots \ar[r]^{=}
    & Y \ar[r]
    & \cdots.  }%
\]
which in turn induces a factorization
\[
  f = p_{\infty} \circ i_{\infty} \colon X
  \xrightarrow{i_{\infty}} G^{\infty}(\K,f)
  \xrightarrow{p_{\infty}} Y
\]
where $G^{\infty}(\K,f)=\mbox{colim}\,G^{l}(\K,f)$.
To prove Theorem \ref{thm:factorization of a smooth map},
we need to verify the following.
\begin{enumerate}
\item
$i_{\infty} \colon X \to G^{\infty}(\K,f)$ is a $\K$-cofibration.
\item
$i_{\infty} \colon X \to G^{\infty}(\J,f)$ is a weak homotopy equivalence.
\item
$p_{\infty} \colon G^{\infty}(\K,f) \to Y$ is a $\K$-fibration.
\end{enumerate}
It follows by Proposition \ref{prp:cofibrations} that the inclusion $i_{l} \colon G^{l-1}(\K,f) \to G^{l}(\K,f)$ is a $\K$-cofibration for every $l>1$,
and hence so is the composition
\[
i_{\infty} \colon X=G^{0}(\K,f) \to \mbox{colim}\,G^{l}(\K,f)=G^{\infty}(\K,f).
\]
Thus $(1)$ holds.
Moreover,
when $\K=\J$,
each $i_{l} \colon G^{l-1}(\J,f)\to G^{l}(\J,f)$ is a deformation retract,
hence a weak homotopy equivalence,
by Proposition \ref{prp:deformation retract}.
Clearly,
this implies (2).
Finally,
to prove $(3)$ we need a further lemma.
\begin{lmm}\label{lmm:property of cubical complex}
  Let $K$ be a cubical subcomplex of $I^{m}$ and $G^{\infty}$ be the
  colimit of a sequence of inclusions of diffeological spaces
  \[
    G^0 \xrightarrow{i_1} G^1 \xrightarrow{i_2} G^2
    \xrightarrow{i_3} \cdots \xrightarrow{i_l} G^l \to
    \cdots
  \]
  Then for any smooth map $f \colon K \to G^{\infty}$, there
  exists an $N > 0$ such that the image of $f$ is contained in
  $G^N$.
\end{lmm}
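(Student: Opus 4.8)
The plan is to show that a compact subset $K$ of $I^n$, being compact and covered by the ``new'' charts appearing at each stage of the colimit, must land in a single finite stage. The colimit $G^{\infty} = \colim G^l$ is formed in $\Diff$, so its underlying set is the union $\bigcup_l G^l$ and its diffeology is the final diffeology with respect to the inclusions $G^l \to G^{\infty}$; in particular a subset $U \subseteq G^{\infty}$ is open in the $D$-topology if and only if $U \cap G^l$ is open in each $G^l$. First I would record that each inclusion $i_l \colon G^{l-1} \to G^l$ is, by the explicit construction, an inclusion of a diffeological space into an adjunction space obtained by gluing on cubes $\tilde I^m_{\epsilon^m}$; the complement $G^l \setminus G^{l-1}$ is a disjoint union over the gluing data of the images of the interiors of these cubes. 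Crucially, each such cube is attached along the subduction $t^m_{\epsilon} \colon K^{m-1}_{\epsilon} \to \tilde I^m_{\epsilon^m}$ restricted to the appropriate boundary, so the image of the \emph{interior} of $\tilde I^m_{\epsilon^m}$ is an open subset of $G^l$ that meets $G^{l-1}$ trivially.

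Suppose, for contradiction, that no single $G^N$ contains $f(K)$. Then we may pick a strictly increasing sequence of indices $l_1 < l_2 < \cdots$ and points $x_j \in K$ with $f(x_j) \in G^{l_j} \setminus G^{l_j - 1}$. Passing to a subsequence by compactness of $K$, we may assume $x_j \to x_{\infty} \in K$. Now I would build, stage by stage, a subset $S \subseteq G^{\infty}$ that is open in the $D$-topology, contains $f(x_j)$ for infinitely many $j$, but whose closure misses $f(x_{\infty})$ — or, more directly, arrange that $f^{-1}(S)$ fails to be open while it ought to be open because $f$ is smooth (hence $D$-continuous) and $S$ is $D$-open. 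Concretely: for each $j$, since $f(x_j)$ lies in the image of the interior of one of the glued cubes in $G^{l_j}$, choose a small $D$-open ball $V_j$ around $f(x_j)$ inside that interior image, small enough that $V_j$ is contained in $G^{l_j} \setminus G^{l_j-1}$; set $S = \bigcup_j V_j$. One checks that $S \cap G^l$ is open in $G^l$ for every $l$ (only finitely many $V_j$'s — those with $l_j \le l$ — are relevant, and each is open in the cube interior it lives in, hence in $G^{l_j}$, hence in $G^l$), so $S$ is $D$-open in $G^{\infty}$. Likewise $S' = \bigcup_j V_j \setminus \{f(x_j)\}$ is $D$-open, as is $G^{\infty} \setminus \{f(x_\infty)\}$ when $f(x_\infty) \notin S$; combining these one produces a $D$-open set whose preimage under $f$ contains all $x_j$ but not $x_{\infty}$ in its closure, contradicting continuity of $f$ at $x_{\infty}$.

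The step I expect to be the main obstacle is the careful verification that the ``new cells'' $G^l \setminus G^{l-1}$ are genuinely open in $G^l$ and that small balls around interior points of the attached cubes stay inside a single stage — in other words, controlling the $D$-topology of the adjunction spaces $X \cup_{\phi_\epsilon} \tilde I^m_{\epsilon^m}$ and of the colimit. This requires knowing that the pushout inclusion $X \to X \cup_{\phi_\epsilon} \tilde I^m_{\epsilon^m}$ is a $D$-embedding with the image of $\tilde I^m_{\epsilon^m} \setminus (\text{boundary})$ open in the complement, and that the countable (or arbitrary) union of such gluings behaves well; these are standard facts about $\Diff$ being topological over $\mathbf{Set}$ and the $D$-topology functor preserving colimits, but they must be invoked with some care since the cubes $\tilde I^m_{\epsilon^m}$ carry nonstandard diffeologies. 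Once the local structure of each $G^l$ near a new interior point is pinned down, the compactness argument above closes the proof.
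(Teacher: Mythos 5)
Your strategy transplants the classical CW-complex argument (new cells are open, a set meeting infinitely many stages gives a closed discrete infinite subset, contradicting compactness/continuity) into the $D$-topology, whereas the paper argues directly from the definition of the colimit diffeology: a plot of $\colim G^{l}$ \emph{locally factors through some finite stage}, so composing $f$ with finitely many plots (built from $\lambda$) whose images cover the compact set $K$ immediately produces the bound $N$. That smooth-specific factorization property is exactly what replaces the separation hypotheses your topological argument needs, and your proposal never invokes it; as a result the proposal has genuine gaps. First, the concluding step is not a contradiction: exhibiting a $D$-open set $S$ with $f^{-1}(S)$ containing all the $x_j$ but not $x_\infty$ is perfectly compatible with continuity, since an open set need not contain the limit of a sequence it contains. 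What you would need is a $D$-open neighborhood $T$ of $f(x_\infty)$ missing infinitely many $f(x_j)$, i.e.\ essentially that the set $\{f(x_j)\}$ is closed, which is a $T_1$-type property of the $D$-topologies of the stages. Nothing of the sort is available for arbitrary diffeological spaces (the ambient $X$ and $Y$ in the construction are arbitrary, and the lemma itself is stated for an arbitrary sequence of inclusions), and it is not established in the paper even for the specific spaces $G^{l}(\K,f)$.

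Second, the step ``$V_j$ is open in $G^{l_j}$, hence in $G^{l}$ for $l\ge l_j$'' is unjustified: the inclusions $G^{l_j}\to G^{l}$ are not known to be $D$-open embeddings, and the topology $G^{l}$ induces on $G^{l_j}$ can be strictly coarser, because each later stage is a pushout whose attaching maps may hit $V_j$; openness of $S\cap G^{l}$ would have to be rechecked against every subsequent attachment, which is precisely the ``main obstacle'' you flag but do not resolve. Third, your argument depends on the explicit cell structure of the $G^{l}(\K,f)$, while the statement being proved concerns a general colimit of inclusions; the only mechanism available at that level of generality is that a parameterization is a plot of the colimit exactly when it locally factors through some $G^{l}$. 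Once you use that fact, the proof is short: cover $K$ by images of plots $Q_v$ (compositions of $\lambda$-type maps, as in the paper's $\sigma=\lambda^n$) so that each point of $K$ has a neighborhood in $K$ over which $f$ factors through a finite stage, then apply compactness of $K$ to finitely many such neighborhoods and take $N$ to be the maximum of the corresponding indices. I recommend abandoning the $D$-topological contradiction scheme and arguing this way.
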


\begin{proof}
By the definition,
$f$ is smooth if and only if so are its restrictions to the faces of $K$.
Hence it suffices to prove the case $K=I^{n} \ (0<n \leq m).$
  Let $\sigma = \lambda^n \colon \R^n \to I^n$.
Then the composite $f \circ \sigma$ is a plot of $G^{\infty}$.
  One easily observes, by the definition of colimits in $\Diff$,
  that there exist for any $v \in I^{n} $ an open neighborhood $V_{v}$ of $v$ and a plot $P_v \colon V_v \to G^{n(v)} \ (n(v) > 0)$ such that
  $f \circ \sigma|V_v$ coincides with the composition of $P_v$
  with the inclusion $G^{n(v)} \to G^{\infty}$.  
  Since
  $I^{n} \subset \cup_{v \in {I^{n}}}V_v$ and {$I^{n}$} is compact, there exist
  $v_1,\, \cdots,\, v_k \in I^{n}$ such that
  \[
    \textstyle f(I^{n}) {=f \circ \sigma (I^{n})} \subset \bigcup_{1 \leq j \leq k} G^{n(v_{{j}})}
  \]
  holds.  Thus we have $f(I^{n}) \subset G^N$ for
  $N = \max\{n(v_{{j}}) \mid 1 \leq j \leq k \}$.
\end{proof}
To see that $p_{\infty}$ is a $\K$-fibration,
suppose $\phi \colon K^{n-1} \to G^{\infty}(\K,f)$ and $\psi \colon I^{n} \to Y$ are admissible maps satisfying $p_{\infty} \circ \phi=\psi|K^{n-1}$,
where $K^{n-1}$ is $\partial I^{n}$ or $J^{n-1}$ according as $\K$ is $\I$ or $\J$.
Then the image of $\phi$ is contained in some $G^{l}(\K,f)$ by Lemma~\ref{lmm:property of cubical complex},
and we have a commutative diagram
\[
  \xymatrix{%
    K^{n-1} \ar[r] \ar[d] %
    & K^{n-1} \ar[r]^-{\phi} \ar[d] %
    & G^{l}(\K,f) \ar[r]^-{i_{l+1}} \ar[d]^-{p_{l}} %
    & G^{l+1}(\K,f) \ar[d]^-{p_{l+1}} \ar[r] %
    & G^{\infty}(\K,f) \ar[d]^-{p_{\infty}} %
    \\
    I^{n} \ar[r]^-{{T^{n}_{\epsilon^{n}}}} %
    & \tilde{I}^{n}_{\epsilon^n} \ar[r]^-{\tilde{\psi}}
    \ar@{.>}[rru] %
    & Y \ar[r]^{=} %
    & Y \ar[r]^-{=} %
    & Y, }
\]
where $\epsilon$ is the largest constant such that both $\phi$
and $\psi$ are $\epsilon$-admissible, and $\tilde{\psi}$
satisfies $\psi = \tilde{\psi} \circ {T^{n}_{\epsilon^{n}}}$ (cf.\
Proposition~\ref{lmm:property of tame}).
As $(\phi,\psi)$ belongs to $S(n,p_l)$, 
There exist by Lemma~\ref{lmm:equivalent of fibrations} {(1)} an extension $\phi_{\epsilon} \colon {K}^{n-1}_{\epsilon^{n-1}} \to G^{l}(\K,f)$ of $\phi$ and a smooth map 
$\Phi \colon \tilde{I}^{n}_{\epsilon^{n}} \to G^{l}(\K,f) \cup_{\phi_{\epsilon}} \tilde{I}^{n}_{\epsilon^{n}} \subset G^{l+1}(\K,f)$ making the diagram commutative.
It is now clear that the resulting composition $I^{n} \to G^{\infty}(\K,f)$ gives a desired lift for the pair $(\phi,\psi)$,
showing that $p_{\infty}$ is a $\K$-fibration.

This completes the proof of Theorem~\ref{thm:factorization of a smooth map}.
%
%
%
%
%
%
%
%
%
%
%
%
\providecommand{\bysame}{\leavevmode\hbox to3em{\hrulefill}\thinspace}
\providecommand{\MR}{\relax\ifhmode\unskip\space\fi MR }
\providecommand{\MRhref}[2]{%
  \href{http://www.ams.org/mathscinet-getitem?mr=#1}{#2}
}
\providecommand{\href}[2]{#2}

\end{document}